\theoremstyle{definition}
\newtheorem{Def}{Definition}[section]
\newtheorem{Thm}[Def]{Theorem}
\newtheorem{Prop}[Def]{Proposition}
\newtheorem{Coro}[Def]{Corollary}
\newtheorem{Rmq}[Def]{Remark}
\newtheorem{Ex}[Def]{Example}
\newtheorem{Lem}[Def]{Lemma}
\newtheorem{Ques}[Def]{Question}
\newcommand{\fonction}[5]{\begin{array}[t]{crcl}
		#1 & #2 & \to & #3   \\  & #4 & \mapsto & #5 \end{array}}
\newcommand{\adhe}[1]{\overline{#1}}
\newcommand{\accolade}[2]{\left\{\begin{array}{#1} #2 \end{array} \right.}
\newcommand{\abs}[1]{\left| #1 \right|}
\title{
	A study of Bishop operators from the point of view of linear dynamics
}
\author{
	Vincent B\'ehani
}
\address{
	Univ. Lille, CNRS, UMR 8524 - Laboratoire Paul Painlev\'e, F-59000 Lille, France
}
\email{
	vincent.behani@univ-lille.fr
}
\date{\today}
\begin{document}
	
	\thanks{
		This work was supported in part by the project FRONT of the French National Research Agency (grant ANR-17-CE40-0021) and by the Labex CEMPI (ANR-11-LABX-0007-01). 
	}

	\thanks{
		I am grateful to Sophie Grivaux for our interesting discussions and her helpful proofreadings.
	}

	\keywords{
		Bishop-type operators, 
		Cyclic vectors, 
		Non-trivial closed invariant subspaces, 
		Approximation by rational numbers, 
		Co-meager sets}

	\subjclass{
		47A16,
		47A10, 
		11J70, 
		54E52,
		47A15,
	 	46E30}
	
	\begin{abstract}
		In this paper, we study the so-called Bishop operators $T _ \alpha$ on $L ^ p ([0, 1])$, with $\alpha \in (0, 1)$ and $1 < p < + \infty$, from the point of view of linear dynamics.
		We show that they are never hypercyclic nor supercyclic, and investigate extensions of these results to the case of weighted translation operators.
		We then investigate the cyclicity of the Bishop operators $T _ \alpha.$
		Building on results by Chalendar and Partington in the case where $\alpha$ is rational, we show that $T _ \alpha$ is cyclic for a dense $G _ \delta$-set of irrational $\alpha$'s, discuss cyclic functions and provide conditions in terms of convergents of $\alpha \in \mathbf R \backslash \mathbf Q$ implying that certain functions are cyclic.
	\end{abstract}

	\maketitle
		
	\section{Introduction}
	
		One of the most famous open problems in modern operator theory is the Invariant Subspace Problem in the Hilbertian setting, which can be stated as follows:
		
		Let $\mathcal H$ be a separable infinite-dimensional complex Hilbert space. For every bounded linear operator $T$ acting on $\mathcal H$, does there exist a non-trivial closed invariant subspace for $T$?

		Bishop suggested in the fifties a family of operators as counterexamples to the Invariant Subspace Problem on the complex Hilbert space $L ^ 2([0, 1])$, defined for every $\alpha \in [0, 1]$ and every $f \in L ^ 2([0, 1])$ by 
		\[
			T _ \alpha f(x) =
			x f(\{x + \alpha\})
			\quad
			\text{a.e. on }
			[0, 1],
		\]
		where $\{\cdot\}$ denotes the fractional part of a real number.
		These Bishop operators can be seen as weighted translation operators defined for every $\alpha \in [0, 1]$, every $\phi \in L ^ \infty([0, 1])$ and every $f \in L ^ 2([0, 1])$ by
		\[
			T _ {\phi, \alpha} f(x) =
			\phi(x) f(\{x + \alpha\})
			\quad
			\text{a.e. on }
			[0, 1],
		\]
		that is to say the composition of the multplication operator $M _ \phi$ and of the translation operator $U _ \alpha$ defined on $L ^ 2([0, 1])$ by
		\[
			M _ \phi f = \phi f 
			\quad
			\text{ and }
			\quad
			U _ \alpha f = 
			f(\{\cdot + \alpha\})
			\quad
			\text{for every }
			f \in L ^ 2([0, 1]).
		\]
		Even if these two types of operators are separately very well understood, the behaviour of the Bishop operators or of the weighted translation operators are still rather mysterious in the general case of irrational numbers $\alpha.$
		
		In 1965, Parrott first studied in \cite{par65} these weighted translation operators and succeeded to compute the spectrum of the Bishop operators, proving that 
		\[
			\sigma (T _ \alpha) = 
			\left\{
				z \in \mathbf C ; 
				\abs z \leq e ^ {-1}
			\right\}
			\quad
			\text{for every irrational }
			\alpha \in [0, 1].
		\]
		
		By means of a functional calculus approach, Davie proved in \cite{dav74} that whenever $\alpha$ is a non-Liouville number in $[0, 1]$, the operator $T _ \alpha$ admits a non-trivial hyperinvariant subspace, that is to say a non-trivial subspace which is invariant by every operator which commutes with $T _ \alpha.$
		Recall that an irrational number $\alpha$ is a Liouville number if there exists a sequence $(p _ n / q _ n) _ {n \geq 0}$ of rational numbers such that
		\[
			\abs{
				\alpha - \frac{p _ n}{q _ n}
			} <
			\frac 1 {q _ n ^ n}
			\quad
			\text{for every }
			n \geq 0.
		\]

		These results have been extended for instance by MacDonald in \cite{mac90} to the weighted translation operators $T _ {\phi, \alpha}.$
		However the extensions only involve new weights $\phi$ while the parameter $\alpha$ still must be a non-Liouville number.
		
		The first one who was able to pass this barrier of Liouville numbers was Flattot in 2008, who proved in \cite{fla08} that $T _ \alpha$ admits non-trivial hyperinvariant subspaces for \textit{some} Liouville numbers $\alpha.$
		
		The set of such parameters was recently further extended in \cite{cha20} by Chamizo, Gallardo-Guti\'errez, Monsalve-L\'opez and Ubis to include more Liouville numbers.
		However the existence of non-trivial closed invariant subspaces for $T _ \alpha$ for \textit{every} irrational number $\alpha$ in $[0, 1]$ is still a tantalizing open problem.
				
		Remark that an operator $T$ does not admit any non-trivial closed invariant subspace if and only if the subspace generated by the $T$-orbit $\{T ^ n x ; n \geq 0\}$ is dense in $\mathcal H$ for every $x \in \mathcal H \backslash \{0\}$, that is to say if every $x \in \mathcal H \backslash \{0\}$ is cyclic for $T.$
		
			It is thus an interesting question to investigate whether the operators $T _ \alpha$ are cyclic, that is to say admit a vector whose orbit under the action of $T _ \alpha$ spans a dense subspace of $L ^ 2([0, 1]).$
			Surprisingly enough, this natural question seems to have been considered only in the case where $\alpha \in \mathbf Q$, by Chalendar, Partington and Pozzi, see \cite{cha11} and \cite{cha10}.
			It is the main goal of this paper to study cyclicity of $T _ \alpha$ for $\alpha \in \mathbf R \backslash \mathbf Q$.
			We also consider the related hypercyclicity (existence of a dense orbit) and supercyclicity (existence of a projective dense orbit) notions.
			Since the Bishop operators and the weighted translation operators are well-defined on the spaces $L ^ p([0, 1])$ with $1 < p < + \infty$, we will carry out our study in this setting.
			
			The paper is organized as follows.
			Section 2 deals with hypercyclicity properties of Bishop and weighted translation operators.
			After recalling the main definitions, we observe (Proposition \ref{nothypercyclic}) that the operators $T _ \alpha$ cannot be hypercyclic, and that the weighted translation operators do not satisfy the so-called Hypercyclicity Criterion (Theorem \ref{HCriterion}).
			In Section 3, we will study supercyclicity properties of our operators.
			We show (Proposition \ref{supercyclic}) that weighted translation operators are not supercyclic for a large class of weights, and that no weighted translation operator satisfies the Supercyclicity Criterion (Theorem \ref{SCriterion}).
			The proofs of these results rely on the Positive Supercyclicity Criterion from \cite{leo04} as well as on a study of the point spectrum of the adjoints of these operators (Theorem \ref{wtspectrum}).
			In Section 4 we study cyclicity of the operators $T _ \alpha$ acting on $L ^p([0, 1]).$
			Building on results from \cite{cha11} which characterize cyclic functions for $T _ \alpha$ when $\alpha \in \mathbf Q$, and using Baire Category arguments, we show (Theorem \ref{comeager}) that $T _ \alpha$ is cyclic for a co-meager set of parameters $\alpha \in [0, 1].$
			We then explicit a set of irrational numbers $\alpha \in [0, 1]$ such that $T _ \alpha$ is cyclic, using the continued fraction expansion of $\alpha.$
			More precisely, for a certain class of functions $f \in L ^ p([0, 1])$, we show that $f$ is cyclic for $T _ \alpha$ as soon as the sequence $(q _ n) _ {n \geq 0}$ of denominators of the convergents of $\alpha \in \mathbf R \backslash \mathbf Q$ has infinitely many sufficiently large gaps (Theorem \ref{psi}).
			We then generalize the results from \cite{cha11} to the case of weighted translation operators with $\alpha$ rational, and generalize our cyclicity results for irrational $\alpha$ to this setting (Theorems \ref{cyclicityw}, \ref{comeagerw} and \ref{psiwt}).
			The final Section 5 presents some further remarks and open questions.
		
		\section{Hypercyclicity}
		
		The first well known dynamical property we study is called \textit{hypercyclicity}.
		We refer the reader to the books \cite{bay09} and \cite{gro11} for an in-depth study of this notion, as well as further topics in linear dynamics.
		Let $X$ be a separable infinite-dimensional complex Banach (or Fréchet) space.
		We will denote by $\mathcal B(X)$ the set of bounded linear operators on $X$.
		\begin{Def}[{\cite[Definition 2.15]{gro11}}]
			An operator $T \in \mathcal B(X)$ is said to be \textit{hypercyclic} if there exists $x \in X$ such that the $T$-orbit 
			$
				\{
					T ^ n x ; 
					n \geq 0
				\}
			$ 
			is dense in $X.$
			In this case $x$ is called a hypercyclic vector for $T.$
		\end{Def}
	
		This notion is linked to a problem similar to the Invariant Subspace Problem called the Invariant Subset Problem.
		Indeed, an operator $T \in \mathcal B(X)$ does not admit any non-trivial invariant closed subset if and only if for every $x \in X \backslash \{0\}$ the $T$-orbit 
		$
			\{
				T ^ n x ; n \geq 0
			\}
		$ 
		is dense in $X$,
		that is to say, if and only if every $x \in X \backslash \{0\}$ is hypercyclic for $T.$
		
		The following theorem shows that the set of hypercyclic vectors of an operator $T \in \mathcal B(X)$ is either empty or a dense $G _ \delta$-set in $X.$
		We recall that a $G _ \delta$-set in $X$ is a countable intersection of open sets of $X$.
		
		\begin{Thm}[Birkhoff's transitivity theorem, {\cite[Theorems 1.16 and 2.19]{gro11}}]
			An operator $T \in \mathcal B(X)$ is hypercyclic if and only if for every pair $(U, V)$ of non-empty open sets in $X$, there exists $n \geq 0$ such that $T ^ n (U) \cap V \ne \emptyset.$
			In this case, the set of hypercyclic vectors for $T$ is a dense $G _ \delta$-set in $X.$
		\end{Thm}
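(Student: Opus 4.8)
The plan is to prove both implications, with the reverse direction (topological transitivity $\Rightarrow$ hypercyclicity) carrying the bulk of the work via a Baire category argument, which will simultaneously yield the dense $G_\delta$ description of the set of hypercyclic vectors.

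For the direct implication, I would assume that $x$ is a hypercyclic vector and fix a pair $(U, V)$ of non-empty open sets. Since the orbit $\{T^n x ; n \geq 0\}$ is dense, it meets $U$, so there exists $m \geq 0$ with $T^m x \in U$. The key observation is that the tail $\{T^j x ; j \geq m\}$ is again dense: it is the orbit of $T^m x$, and since $X$, being an infinite-dimensional topological vector space, has no isolated points, deleting the finitely many points $x, Tx, \dots, T^{m-1} x$ from the dense full orbit leaves a dense set. Hence this tail meets $V$, yielding some $j \geq m$ with $T^j x \in V$. Writing $n = j - m \geq 0$, we have $T^m x \in U$ and $T^n(T^m x) = T^j x \in V$, so that $T^j x \in T^n(U) \cap V$, which proves $T^n(U) \cap V \ne \emptyset$.

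For the converse, I would exploit the separability of $X$ by fixing a countable base $(V_k)_{k \geq 1}$ of non-empty open subsets of $X$. A vector $x$ is hypercyclic precisely when its orbit meets each $V_k$, which can be encoded as
\[
	HC(T) = \bigcap_{k \geq 1} \bigcup_{n \geq 0} T^{-n}(V_k).
\]
Since $T$ is continuous, each $T^{-n}(V_k)$ is open, hence each $\bigcup_{n \geq 0} T^{-n}(V_k)$ is open, so $HC(T)$ is a $G_\delta$-set. It then remains to check that each of these open sets is dense: given any non-empty open set $U$, topological transitivity provides some $n \geq 0$ with $T^n(U) \cap V_k \ne \emptyset$, that is, some $x \in U$ with $T^n x \in V_k$, whence $x \in U \cap T^{-n}(V_k)$. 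As $X$ is a complete metric space, the Baire category theorem shows that $HC(T)$, being a countable intersection of dense open sets, is a dense $G_\delta$-set; in particular it is non-empty, so $T$ is hypercyclic, and the final assertion of the statement follows at once.

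The conceptual heart — and the only genuine obstacle — lies in the reverse direction: recognizing that hypercyclicity of a single vector is equivalent to its orbit meeting every basic open set, rephrasing the set of such vectors as the explicit $G_\delta$ intersection above, and checking that transitivity is exactly what forces each constituent open set to be dense. The three structural hypotheses on $X$ enter precisely at this point: separability furnishes the countable base, continuity of $T$ furnishes the openness of the preimages $T^{-n}(V_k)$, and completeness is what licenses the Baire category conclusion.
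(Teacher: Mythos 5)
The paper states this theorem without proof, quoting it directly from \cite{gro11}, and your argument is correct and is precisely the standard Baire-category proof given in that reference: the forward implication correctly uses that a non-trivial topological vector space has no isolated points (so the tail $\{T^j x ; j \geq m\}$ of a dense orbit remains dense), and the converse correctly encodes the hypercyclic vectors as $\bigcap_{k \geq 1} \bigcup_{n \geq 0} T^{-n}(V_k)$, verifies density of each open union from transitivity, and invokes the Baire category theorem, which simultaneously yields the dense $G_\delta$ conclusion. Nothing is missing.
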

	
		Hypercyclicity being a well studied notion, we give here some important examples of hypercyclic operators.
		
		\begin{Ex}[{\cite[Examples 2.20, 2.21 and 2.22]{gro11}}]
			Let us consider the Hilbert space $\ell ^ 2(\mathbf N)$ and the Fréchet space $H(\mathbf C)$  of entire functions endowed with the seminorms
			$
			\|\cdot\| _ {n, \infty} 
			\colon 
			f \in H(\mathbf C) 
			\mapsto 
			\sup _ {\abs z \leq n}
			\abs{f(z)}
			$,
			$n \geq 1.$
			The operators 
			$\lambda B$ on $\ell ^ 2(\mathbf N)$, $D$ on $H(\mathbf C)$ and $T ^ {(a)}$ on $H(\mathbf C)$, defined for every $(x _ 1, x _ 2, \ldots) \in \ell ^ 2(\mathbf N)$ and every $f \in H(\mathbf C)$ by
			\[
				\lambda B(x _ 1, x _ 2, \ldots) = 
				(\lambda x _ 2, \lambda x _ 3, \ldots), 
				\quad
				D(f) = f'
				\quad
				\text{ and }
				\quad
				T ^ {(a)} f = f(\cdot + a)
			\]
			and respectively called Rolewicz's operator, MacLane's operator and Birkhoff's operator, are hypercyclic whenever $\abs \lambda > 1$ and $a\ne 0.$
		\end{Ex}
	
		A useful tool to prove the hypercyclicity of an operator is the following criterion.
		
		\begin{Thm}[Hypercyclicity Criterion, {\cite[Theorem 3.12]{gro11}}] \label{hypercyclicitycriterion}
			Let $T \in \mathcal B(X)$.
			Suppose that there exist dense subsets $X _ 0$ and $Y _ 0$ of $X$, an increasing sequence $(n _ k) _ {k \geq 1}$ of integers and a sequence $(S _ {n _ k} \colon Y _ 0 \to X) _ {k \geq 1}$ of maps such that:
			\begin{enumerate}[(i)]
				\item
					$
							T ^ {n _ k} x 
						\to 0
					$
					as 
					$
						k \to + \infty
					$ 
					for every $x \in X _ 0$;
				\item
					$
							S _ {n _ k} y 
						\to 0
					$
					as 
					$
						k \to + \infty
					$ 
					for every $y \in Y _ 0$;
				\item
					$
							T ^ {n _ k} S _ {n _ k} y
						\to y
					$
					as 
					$
						k \to + \infty
					$ 
					for every $y \in Y _ 0.$
			\end{enumerate}
			Then the operator $T$ is hypercyclic.
		\end{Thm}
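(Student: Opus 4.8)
The plan is to verify the topological transitivity condition supplied by Birkhoff's transitivity theorem, namely that for every pair $(U, V)$ of non-empty open subsets of $X$ there exists some index $n \geq 0$ with $T^n(U) \cap V \neq \emptyset$; by that theorem this immediately yields the hypercyclicity of $T$. The strategy is to use the density of $X_0$ and $Y_0$ to produce a point lying in $U$ whose image under a suitable iterate $T^{n_k}$ is forced into $V$ by the three hypotheses.

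First I would fix two non-empty open sets $U$ and $V$. Since $X_0$ is dense in $X$ and $U$ is open and non-empty, the intersection $X_0 \cap U$ is non-empty, so I may pick some $x \in X_0 \cap U$; likewise the density of $Y_0$ lets me pick some $y \in Y_0 \cap V$. The crucial step is then to introduce the sequence of points $z_k := x + S_{n_k} y$ for $k \geq 1$. As $x$ already lies in the open set $U$ and, by hypothesis (ii), $S_{n_k} y \to 0$, the points $z_k$ converge to $x$, so that $z_k \in U$ for all sufficiently large $k$.

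Next I would apply the iterate $T^{n_k}$ and invoke the linearity of $T$ to write $T^{n_k} z_k = T^{n_k} x + T^{n_k} S_{n_k} y$. By hypothesis (i) the first term tends to $0$, and by hypothesis (iii) the second term tends to $y$, whence $T^{n_k} z_k \to y$. Since $y$ belongs to the open set $V$, this gives $T^{n_k} z_k \in V$ for all sufficiently large $k$. Choosing one index $k$ large enough that both inclusions $z_k \in U$ and $T^{n_k} z_k \in V$ hold at once, we obtain $T^{n_k} z_k \in T^{n_k}(U) \cap V$, so this set is non-empty, and taking $n = n_k$ completes the verification.

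I do not expect any serious obstacle here: the argument is essentially a bookkeeping of the three convergences against Birkhoff's criterion. The only two points deserving care are that the maps $S_{n_k}$ are not assumed to be linear, which is precisely why they are only ever applied to the single fixed vector $y$ and why the splitting of $T^{n_k} z_k$ relies solely on the linearity of $T$; and that one must combine the three limiting statements to select a common index $k$ for which both open-set memberships hold simultaneously.
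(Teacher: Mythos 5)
Your proof is correct: the paper itself does not prove this statement but quotes it from Grosse-Erdmann--Peris \cite{gro11}, and your argument (perturbing $x \in X_0 \cap U$ by $S_{n_k}y$, splitting $T^{n_k}z_k = T^{n_k}x + T^{n_k}S_{n_k}y$ by linearity, and invoking Birkhoff's transitivity theorem) is exactly the standard proof given in that reference. Your cautionary remarks --- that the $S_{n_k}$ need not be linear and that a single index $k$ must witness both memberships simultaneously --- are precisely the right points to flag.
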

	
		B\`es and Peris proved in \cite[Theorem 2.3]{bes99} that an operator $T$ on $X$ satisfies the Hypercyclicity Criterion if and only if the operator 
		$
			T \oplus T \colon 
			(x, y) \in X \oplus X 
			\mapsto 
			(Tx, Ty)
		$ 
		is hypercyclic.
		It was an open question for a long time to give examples of hypercyclic operators not satisfying the Hypercyclicity Criterion.
	
		It has been proved by Grosse-Erdmann and Peris in \cite[Theorem 3.22]{gro11} that the Hypercyclicity Criterion as stated in Theorem \ref{hypercyclicitycriterion} is actually equivalent to a similar criterion involving a partial inverse map $S$ and its iterates $(S ^ {n _ k}) _ {k \geq 0}$ instead of a sequence of maps $(S _ {n _ k}) _ {k \geq 0}.$
		
		\begin{Thm}[Gethner-Shapiro Criterion, {\cite[Theorem 2.2]{get87} and \cite[Theorem 3.10]{gro11}}]
			Let $T \in \mathcal B(X)$.
			Suppose that there exist dense subsets $X _ 0$ and $Y _ 0$ of $X$, an increasing sequence $(n _ k) _ {k \geq 1}$ of integers and a map $S \colon Y _ 0 \to Y _ 0$ such that:
			\begin{enumerate}[(i)]
				\item 
					$
							T ^ {n _ k} x
						\to 0
					$ 
					as 
					$
						k \to + \infty
					$
					for every $x \in X _ 0$;
				\item
					$
							S ^ {n _ k} y
						\to 0
					$
					as 
					$
						k \to + \infty
					$ 
					for every $y \in Y _ 0$;
				\item
					$
						T S y = y
					$ 
					for every $y \in Y _ 0.$
			\end{enumerate}
			Then the operator $T$ is hypercyclic.
		\end{Thm}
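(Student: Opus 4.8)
The plan is to deduce the Gethner-Shapiro Criterion from the Hypercyclicity Criterion (Theorem \ref{hypercyclicitycriterion}) by building an appropriate sequence of maps from the iterates of the single map $S$. The crucial feature of the hypotheses is that $S$ is required to send $Y _ 0$ into itself; this is precisely what makes the iterates $S ^ n$ well-defined on $Y _ 0$ and what allows the one-step relation $TSy = y$ to be promoted to a full cancellation identity.

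First I would set $S _ {n _ k} := S ^ {n _ k} \colon Y _ 0 \to X$ for each $k \geq 1$, which is legitimate since $S(Y _ 0) \subseteq Y _ 0$. With this choice, conditions (i) and (ii) of Theorem \ref{hypercyclicitycriterion} are immediate: condition (i) is literally the present hypothesis (i), while condition (ii) reads $S _ {n _ k} y = S ^ {n _ k} y \to 0$ as $k \to +\infty$, which is exactly the present hypothesis (ii).

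The only substantive step is condition (iii) of Theorem \ref{hypercyclicitycriterion}, namely $T ^ {n _ k} S _ {n _ k} y \to y$. I would in fact establish the stronger exact identity $T ^ n S ^ n y = y$ for every $n \geq 0$ and every $y \in Y _ 0$, by induction on $n$. The case $n = 0$ is trivial. For the inductive step, I write $T ^ {n+1} S ^ {n+1} y = T ^ n \bigl( T S (S ^ n y) \bigr)$; since $S ^ n y \in Y _ 0$ (because $S$ maps $Y _ 0$ into itself) and $TSw = w$ for every $w \in Y _ 0$, this equals $T ^ n S ^ n y$, which is $y$ by the induction hypothesis. In particular $T ^ {n _ k} S _ {n _ k} y = y$ for every $k$, so condition (iii) holds — the relevant sequence being constantly equal to $y$.

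Having verified all three hypotheses of the Hypercyclicity Criterion, I would conclude at once that $T$ is hypercyclic. The content of the statement is thus essentially that iterating a \emph{single} partial inverse $S$ is a particular instance of prescribing a \emph{sequence} of maps $S _ {n _ k}$; there is no real obstacle, and the only point demanding care is the repeated use of $S(Y _ 0) \subseteq Y _ 0$ at each stage of the induction, which is exactly why the hypothesis insists that $S$ take values in $Y _ 0$ rather than merely in $X$.
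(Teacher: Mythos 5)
Your proof is correct. Note that the paper itself does not prove this statement at all: it is quoted directly from Gethner--Shapiro and from Grosse-Erdmann--Peris, so there is no in-paper argument to compare against. Your reduction --- taking $S_{n_k} := S^{n_k}$ and proving the exact identity $T^n S^n y = y$ by induction, using at each step that $S^n y \in Y_0$ so that the relation $TSw = w$ applies --- is the standard way to see the Gethner--Shapiro Criterion as a special case of the Hypercyclicity Criterion (Theorem \ref{hypercyclicitycriterion}), and it is precisely the easy direction of the equivalence the paper later records as Theorem \ref{HCGS}. The induction is sound, and since the Hypercyclicity Criterion in the paper imposes no linearity or continuity on the maps $S_{n_k}$, your choice of $S^{n_k}$ satisfies all its hypotheses.
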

	
		These two criteria are in fact equivalent, altough the Hypercyclicity Criterion looks more general than the Gethner-Shapiro Criterion.
	
		\begin{Thm}[{\cite[Theorem 3.22]{gro11}}]\label{HCGS}
			An operator $T \in \mathcal B(X)$ satisfies the Hypercyclicity Criterion if and only if it satisfies the Gethner-Shapiro Criterion.
		\end{Thm}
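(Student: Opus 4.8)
The plan is to treat the two implications separately, the implication from the Gethner--Shapiro Criterion to the Hypercyclicity Criterion being the elementary one and the reverse implication carrying all the weight.

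First I would show that the Gethner--Shapiro Criterion implies the Hypercyclicity Criterion. Suppose $T$ satisfies the former, with dense sets $X_0, Y_0$, an increasing sequence $(n_k)_{k \geq 1}$ and a map $S \colon Y_0 \to Y_0$ such that $TSy = y$ for every $y \in Y_0$. The natural move is to set $S_{n_k} := S^{n_k} \colon Y_0 \to X$ and to keep the same data $X_0, Y_0, (n_k)$. Condition (i) is then literally the same in both criteria; condition (ii) of the Hypercyclicity Criterion, namely $S_{n_k} y = S^{n_k} y \to 0$, is exactly condition (ii) of the Gethner--Shapiro Criterion; and for condition (iii), since $S$ maps $Y_0$ into itself, a straightforward induction on $j$ using $TSy = y$ gives $T^{j} S^{j} y = y$ for every $j \geq 0$ and every $y \in Y_0$. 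In particular $T^{n_k} S_{n_k} y = y$ for all $k$, so $T^{n_k} S_{n_k} y \to y$ holds trivially, and $T$ satisfies the Hypercyclicity Criterion.

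The substantial direction is the converse, and here I would not argue directly but pass through a topological characterisation. By the theorem of B\`es and Peris recalled above (\cite{bes99}), $T$ satisfies the Hypercyclicity Criterion if and only if $T \oplus T$ is hypercyclic; running the same circle of equivalences, this is in turn equivalent to $T$ being hereditarily hypercyclic with respect to some increasing sequence $(n_k)$. A Baire category argument applied to the relevant open sets lets one reformulate this as a mixing property along $(n_k)$: for every pair $(U, V)$ of nonempty open subsets of $X$ one has $T^{n_k}(U) \cap V \neq \emptyset$ for all sufficiently large $k$. This reformulation is what I would take as the working hypothesis.

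It then remains to manufacture, from this mixing property, a single partial right inverse $S$ together with two dense sets. The plan is to fix a countable base $(W_j)_{j \geq 1}$ of nonempty open sets and to build, by an inductive back-and-forth procedure, a countable dense set $Y_0$, a dense set $X_0$ on which $T^{n_k} x \to 0$, and the values of $S$ on $Y_0$; passing beforehand to a sufficiently rapidly increasing subsequence of $(n_k)$ secures the decay $S^{n_k} y \to 0$. The hard part will be to obtain the \emph{exact} identity $TSy = y$ on all of $Y_0$, rather than the merely approximate relation $T^{n_k} S_{n_k} y \to y$ furnished by the Hypercyclicity Criterion: the construction must select the preimages defining $S$ so that they lie again in $Y_0$, so that $S$ is genuinely iterable as a self-map of $Y_0$, while simultaneously keeping $Y_0$ dense and controlling both decay conditions. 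Once $S$, $X_0$ and $Y_0$ are produced with these properties, the Gethner--Shapiro Criterion holds and the equivalence is established.
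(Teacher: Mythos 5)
Your first half (Gethner--Shapiro implies Hypercyclicity Criterion) is correct and is the standard easy direction. The converse, however, is not a proof but a plan: you correctly reduce, via B\`es--Peris, to hereditary hypercyclicity, i.e.\ mixing along $(n_k)$, but then you only announce an inductive ``back-and-forth'' construction and explicitly flag that ``the hard part will be to obtain the exact identity $TSy = y$'' without resolving it. That identity is the entire content of the theorem, and the naive inductive plan hits two obstacles you do not address: (a) for $S \colon Y_0 \to Y_0$ with $TS = \mathrm{id}$ to be iterable, every point of $Y_0$ must admit an \emph{infinite} backward orbit under $T$, and since $T$ need not be surjective the existence of a dense set of such points is not automatic; (b) the decay $S^{n_k} y \to 0$ requires the chosen backward orbits to pass near $0$ along a common subsequence, which cannot be secured merely by thinning $(n_k)$ after the fact, since the values of $S$ are already committed by the earlier stages of the induction.

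The argument the paper relies on (it cites \cite[Theorem 3.22]{gro11} for this statement, and reproduces the technique in full in its proof of the supercyclic analogue, Theorem \ref{GStype}) resolves both obstacles at once with a device absent from your outline. Since $T$ has dense range, the Mittag-Leffler theorem shows that the set of points admitting infinite backward orbits is dense; one then works in the Fr\'echet space $\mathcal X \subset X^{\mathbf N}$ of backward orbits $(x_n)_{n \in \mathbf N}$ with $T x_{n+1} = x_n$, on which the coordinatewise operator $\mathcal T$ has a genuine inverse, the backward shift $\mathcal B$. The mixing property along $(n_k)$ lifts to topological transitivity of $(\mathcal T^{n_k})_{k \geq 1}$ on $\mathcal X$, hence also of $(\mathcal B^{n_k})_{k \geq 1}$, so by the Universality Criterion and a Baire argument there is a single $y = (y_n)_{n \in \mathbf N} \in \mathcal X$ whose orbits under both families are dense in $\mathcal X$. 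Then $Y_0 = \{y_n ; n \in \mathbf N\}$ is dense in $X$ (project the dense $\mathcal B$-orbit onto the first coordinate), the map $S(y_n) = y_{n+1}$ satisfies $TS = \mathrm{id}$ \emph{by construction} of $\mathcal X$ --- there is no selection problem left --- well-definedness is a separate short check, and $S^{m_k} y_n = y_{n + m_k} \to 0$ along a subsequence $(m_k)$ because $0$ lies in the closure of the $\mathcal B$-orbit of $y$. Without this passage to the backward-orbit space (or an equivalent mechanism producing coherent backward orbits with the required decay), your sketch of the substantial direction does not constitute a proof.
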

	
	\subsection{Hypercyclicity of Bishop operators}
	
		First of all, it is useful to compute the iterates of $T _ \alpha.$
		For every $\alpha \in [0, 1]$, every $f \in L ^ p([0, 1])$ and every $n \geq 0$
		\[
			T _ \alpha ^ n f(x) = 
			x \{x + \alpha\} \ldots \{x + (n - 1) \alpha\} f(\{x + n \alpha\})
			\quad
			\text{a.e. on }
			[0, 1].
		\]
	
		We will be interested here in the hypercyclicity of the Bishop operators, and later on in the hypercyclicity of the weighted translation operators.
		Bishop operators are easily seen not to be hypercyclic.
		
		\begin{Prop}\label{nothypercyclic}
			For every $\alpha \in [0, 1]$, the Bishop operator $T _ \alpha$ is not hypercyclic.
		\end{Prop}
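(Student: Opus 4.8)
The plan is to show that $T_\alpha$ is a contraction, i.e. $\|T_\alpha\| \le 1$, and then to invoke the elementary fact that an operator all of whose orbits are bounded cannot be hypercyclic on an infinite-dimensional space. The whole argument is soft and uses nothing beyond the explicit iterate formula recalled just above the statement.

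First I would bound the iterates directly from that formula. Since each of the factors $x, \{x+\alpha\}, \dots, \{x+(n-1)\alpha\}$ lies in $[0,1]$, the weight $x\{x+\alpha\}\cdots\{x+(n-1)\alpha\}$ has modulus at most $1$, so $\abs{T_\alpha^n f(x)} \le \abs{f(\{x+n\alpha\})}$ a.e. Integrating and using that $x \mapsto \{x+n\alpha\}$ is a measure-preserving rotation of $\mathbf R / \mathbf Z$ (a change of variables), I get $\|T_\alpha^n f\| _ p \le \|f\| _ p$ for every $f \in L^p([0,1])$ and every $n \ge 0$. In particular, taking $n = 1$, this shows $\|T_\alpha\| \le 1$; equivalently one may factor $T_\alpha = M_x U_\alpha$, note that $U_\alpha$ is an isometry and that $M_x$ is multiplication by a function bounded by $1$, and reach the same bound.

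The second step is to conclude. For any $f \in L^p([0,1])$ the estimate above gives $\|T_\alpha^n f\| _ p \le \|f\| _ p$ for all $n \ge 0$, so the orbit $\{T_\alpha^n f ; n \ge 0\}$ is contained in the closed ball of radius $\|f\| _ p$. As $L^p([0,1])$ is infinite-dimensional, this ball is a proper closed subset of the space and hence is not dense; therefore no $f$ can be a hypercyclic vector, and $T_\alpha$ is not hypercyclic. I do not expect any genuine obstacle here: the only point requiring (routine) care is the measure-preservation of the rotation $x \mapsto \{x+n\alpha\}$, which is what makes the orbit bounded, and that is standard.
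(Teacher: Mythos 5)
Your proposal is correct and follows essentially the same route as the paper: both arguments establish that $T_\alpha$ is a contraction (using that the weight is bounded by $1$ and the translation $x \mapsto \{x+\alpha\}$ preserves Lebesgue measure), so every orbit is bounded and hence cannot be dense. The paper only needs the single-step estimate $\|T_\alpha f\|_p \le \|f\|_p$, while you additionally verify the iterate bound, but this is an inessential difference.
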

		\begin{proof}
			For every $f \in L ^ p([0, 1])$,
			\[
				\| T _ \alpha f \| _ p ^ p = 
				\int _ 0 ^ 1 
					x ^ p 
					\abs{
						f(\{x + \alpha\})
					} ^ p
				dx \leq
				\int _ 0 ^ 1
				\abs{
					f(y)
				} ^ p 
			dy
			\leq
			\| f \| _ p ^ p.				
			\]
			Thus $\|T _ \alpha \| \leq 1$, 
			so every $T _ \alpha$-orbit is bounded in $L ^ p([0, 1])$ and $T _ \alpha$ is not hypercyclic.
		\end{proof}
	
	\subsection{Hypercyclicity of weighted translation operators}
	
		Since $\| T _ {\phi, \alpha} \| = \|\phi\| _ {\infty}$ for every $\phi \in L ^ \infty ([0, 1])$ and every $\alpha \in [0, 1]$, $T _ {\phi, \alpha}$ cannot be hypercyclic if $\|\phi \| _ \infty \leq 1.$ 
		We also observe that $T _ {\phi, \alpha}$ cannot be hypercyclic in the case where $m(\{\phi = 0\}) > 0$ if $m$ denotes the Lebesgue measure on $[0, 1].$
		
		\begin{Prop}
			Let $\phi \in L ^ \infty ([0, 1])$ and let $\alpha \in [0, 1].$
			If $\| \phi \| _ \infty \leq 1$ or if $m(\{\phi = 0\}) > 0$, then the operator $T _ {\phi, \alpha}$ is not hypercyclic.			
		\end{Prop}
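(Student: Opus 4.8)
The plan is to treat the two hypotheses separately, since they force non-hypercyclicity for completely different reasons. The first case is essentially identical to the proof of Proposition \ref{nothypercyclic}. When $\|\phi\| _ \infty \leq 1$, the remark preceding the statement gives $\|T _ {\phi, \alpha}\| = \|\phi\| _ \infty \leq 1$, so that $\|T _ {\phi, \alpha} ^ n f\| _ p \leq \|f\| _ p$ for every $n \geq 0$ and every $f \in L ^ p([0, 1])$. Every orbit is therefore bounded, and a bounded set cannot be dense in the infinite-dimensional space $L ^ p([0, 1])$; hence $T _ {\phi, \alpha}$ is not hypercyclic.

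The second case, where $A := \{\phi = 0\}$ has positive Lebesgue measure, relies instead on the product structure of the iterates. Exactly as for the Bishop operators, one checks that
\[
    T _ {\phi, \alpha} ^ n f(x) =
    \phi(x) \phi(\{x + \alpha\}) \ldots \phi(\{x + (n - 1) \alpha\}) f(\{x + n \alpha\})
    \quad
    \text{a.e. on } [0, 1]
\]
for every $n \geq 1$. The key observation is that the leading factor $\phi(x)$ occurs in each of these products, so that $T _ {\phi, \alpha} ^ n f$ vanishes almost everywhere on $A$ for every $n \geq 1$, regardless of the choice of $f$.

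Consequently, writing $M := \{g \in L ^ p([0, 1]) ; g = 0 \text{ a.e. on } A\}$, the whole orbit $\{T _ {\phi, \alpha} ^ n f ; n \geq 0\}$ is contained in $\{f\} \cup M$. Since $m(A) > 0$, the indicator function $\mathbf 1 _ A$ belongs to $L ^ p([0, 1]) \backslash M$, so $M$ is a \emph{proper} closed subspace of $L ^ p([0, 1])$, hence nowhere dense; adding the single point $f$ keeps the set nowhere dense. Thus no orbit can be dense in $L ^ p([0, 1])$, and $T _ {\phi, \alpha}$ is not hypercyclic. I expect no genuine obstacle here: the only point requiring (routine) care is to confirm that $M$ is proper and closed, which is precisely where the hypothesis $m(A) > 0$ enters.
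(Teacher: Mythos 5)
Your proposal is correct and follows essentially the same route as the paper: the case $\|\phi\|_\infty \leq 1$ is settled by boundedness of orbits via $\|T_{\phi,\alpha}\| = \|\phi\|_\infty$, and the case $m(\{\phi = 0\}) > 0$ rests on the same key observation that every iterate $T_{\phi,\alpha}^n f$ with $n \geq 1$ vanishes a.e.\ on $\{\phi = 0\}$. The only cosmetic difference is the concluding step: the paper exhibits the explicit witness $\mathbf 1_{\{\phi = 0\}}$, which the orbit never approaches within distance $m(\{\phi = 0\})^{1/p}$, whereas you invoke the nowhere-density of the proper closed subspace $M$ (which also handles the $n = 0$ term slightly more carefully than the paper does) --- both conclusions are routine once the key observation is in place.
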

		\begin{proof}
			Suppose that $m(\{\phi = 0\}) > 0.$
			Since for every $f \in L ^ p([0, 1])$ and every $n \geq 0$,
			$
				T _ {\phi, \alpha} ^ n f (x) = 
				\phi(x) \phi(\{x + \alpha\}) \ldots \phi(\{x + (n - 1) \alpha\}) f(\{x + n \alpha\})
			$ 
			a.e. on $[0, 1]$, then 
			$
				\{
					\phi = 0
				\} 
				\subset
				\{
					T _ {\phi, \alpha} ^ nf = 0
				\}.
			$
			Hence the orbit $\{T _ {\phi, \alpha} ^ n f ; n \geq 0\}$ cannot be dense in $L ^ p([0, 1])$ since for every $n \geq 0$
			\[
				\|T _ {\phi, \alpha} ^ n f - \mathbf 1 _ {\{\phi = 0\}} \| _ p ^ p
				\geq
				\int _ {\{\phi = 0\}} 1 dx 
				\geq 
				m(\{\phi = 0\}) 
				> 0.
			\]
		\end{proof}
	
		In the next section, we will extend the set of weights $\phi$ such that $T _ {\phi, \alpha}$ is not hypercyclic, but we don't know if $T _ {\phi, \alpha}$ can ever be hypercyclic.
		However we will prove that it cannot satisfy the Gethner-Shapiro Criterion, and hence cannot satisfy the Hypercyclicity Criterion either. 
		
		\begin{Thm}\label{HCriterion}
			Let $\phi \in L ^ \infty ([0, 1])$ and let $\alpha \in [0, 1].$
			The operator $T _ {\phi, \alpha}$ cannot satisfy the Hypercyclicity Criterion.
		\end{Thm}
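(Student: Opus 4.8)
The plan is to invoke Theorem \ref{HCGS}, which allows me to replace the Hypercyclicity Criterion by the (formally weaker) Gethner-Shapiro Criterion: it suffices to prove that $T_{\phi,\alpha}$ does not satisfy the latter. I will argue by contradiction, assuming there exist dense subsets $X_0,Y_0$ of $L^p([0,1])$, an increasing sequence $(n_k)$ and a map $S\colon Y_0\to Y_0$ satisfying conditions (i)--(iii) of that criterion. Before the main argument I would dispose of the degenerate case $m(\{\phi=0\})>0$: every function in the range of $T_{\phi,\alpha}$ vanishes a.e.\ on $\{\phi=0\}$, so condition (iii), namely $T_{\phi,\alpha}Sy=y$, would force $Y_0$ into the proper closed subspace $\{g\in L^p([0,1]):g=0\text{ a.e.\ on }\{\phi=0\}\}$, contradicting the density of $Y_0$. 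Thus I may assume $\phi\ne 0$ a.e.

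The heart of the matter is to extract from (iii) an explicit formula for $S$ and its iterates. Writing $W_n(x)=\prod_{j=0}^{n-1}\phi(\{x+j\alpha\})$ for the product of weights, the equation $T_{\phi,\alpha}z=y$ determines $z$ uniquely a.e.\ (since $\phi\ne 0$ a.e.\ and $x\mapsto\{x+\alpha\}$ is a measure-preserving bijection of $[0,1]$), so necessarily $Sy(x)=y(\{x-\alpha\})/\phi(\{x-\alpha\})$ and, by induction, $S^ny(x)=y(\{x-n\alpha\})/W_n(\{x-n\alpha\})$ a.e. Using that $x\mapsto\{x+n\alpha\}$ preserves Lebesgue measure, conditions (i) and (ii) then read, for $x\in X_0$ and $y\in Y_0$,
\[
	\int_0^1\abs{W_{n_k}(t)}^p\,\abs{x(\{t+n_k\alpha\})}^p\,dt\to 0
	\qquad\text{and}\qquad
	\int_0^1\frac{\abs{y(s)}^p}{\abs{W_{n_k}(s)}^p}\,ds\to 0 .
\]

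To reach a contradiction I would exploit that these two conditions pull $\abs{W_{n_k}}$ in opposite directions on large sets. Letting $\mathbf 1$ denote the constant function equal to $1$ and choosing, by density, $x\in X_0$ and $y\in Y_0$ with $\|x-\mathbf 1\|_p$ and $\|y-\mathbf 1\|_p$ small, the sets $A=\{\abs x\ge 1/2\}$ and $A'=\{\abs y\ge 1/2\}$ have measure close to $1$. Condition (i) then yields $\int_{B_k}\abs{W_{n_k}}^p\,dt\to 0$, where $B_k=\{t:\{t+n_k\alpha\}\in A\}$ has measure $m(A)$ for every $k$, while condition (ii) yields $\int_{A'}\abs{W_{n_k}}^{-p}\,ds\to 0$. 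On the overlap $B_k\cap A'$, whose measure is bounded below by the positive constant $m(A)+m(A')-1$, the Cauchy-Schwarz inequality applied to the trivial factorisation $\abs{W_{n_k}}^{p/2}\cdot\abs{W_{n_k}}^{-p/2}=1$ gives
\[
	m(B_k\cap A')\le\Big(\int_{B_k}\abs{W_{n_k}}^p\Big)^{1/2}\Big(\int_{A'}\abs{W_{n_k}}^{-p}\Big)^{1/2}\to 0,
\]
which is the desired contradiction. The main obstacle I anticipate is the careful conversion of hypotheses (i) and (ii), which are only assumed on \emph{dense} sets, into the incompatible decay of $\int_{B_k}\abs{W_{n_k}}^p$ and of $\int_{A'}\abs{W_{n_k}}^{-p}$; the approximation-by-$\mathbf 1$ device above is designed precisely to turn the dense-set conditions into uniform measure-theoretic control of the weight products $W_{n_k}$, and the verification of the forced formula for $S$ (hence for each $S^{n_k}$) is the other point requiring care.
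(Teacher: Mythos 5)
Your proposal is correct, and it follows the paper's overall skeleton: the same reduction via Theorem \ref{HCGS} to the Gethner-Shapiro Criterion, the same observation that condition (iii) forces the explicit formula $Sy(x)=y(\{x-\alpha\})/\phi(\{x-\alpha\})$ (hence the formula for $S^{n}$ with the weight products $W_n$), and the same basic mechanism of pitting condition (i) against condition (ii) through Cauchy-Schwarz, using functions from $X_0$ and $Y_0$ that are bounded below on sets of measure close to $1$. Where you genuinely diverge is in the endgame. The paper applies Cauchy-Schwarz to the product $f(\{x+n_k\alpha\})g(x)$, obtaining $\int_0^1|f(\{x+n_k\alpha\})g(x)|^{p/2}\,dx\to 0$; it then must extract a subsequence along which this product tends to $0$ almost everywhere, and derive the contradiction $3/4\leq 1/4$ via a somewhat involved measure-theoretic argument with nested intersections of the translated sets $\{x:\{x+m_k\alpha\}\notin\omega\}$ and a liminf estimate using the measure-preservation of $R_\alpha$. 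You instead localize the hypotheses first — turning (i) into $\int_{B_k}|W_{n_k}|^p\to 0$ and (ii) into $\int_{A'}|W_{n_k}|^{-p}\to 0$ — and then apply Cauchy-Schwarz to the trivial factorization $1=|W_{n_k}|^{p/2}\,|W_{n_k}|^{-p/2}$ on $B_k\cap A'$, getting $m(B_k\cap A')\to 0$ directly, against the uniform lower bound $m(A)+m(A')-1>0$. This eliminates both the subsequence extraction (a.e. convergence) and the liminf manipulation, so your endgame is shorter and more elementary, at the price of no structural generality gained or lost. A second, minor difference: you dispose of the case $m(\{\phi=0\})>0$ internally, noting that condition (iii) confines $Y_0$ to the proper closed subspace of functions vanishing a.e. on $\{\phi=0\}$, whereas the paper implicitly relies on its preceding proposition (non-hypercyclicity in that case) together with the fact that the Hypercyclicity Criterion implies hypercyclicity; your handling is self-contained and equally valid.
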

		\begin{proof}
			Suppose that $T _ {\phi, \alpha}$ satisfies the Hypercyclicity Criterion.
			Then by Theorem \ref{HCGS}, it satisfies the Gethner-Shapiro Criterion.
			Suppose also that 
			$
				m(\{
					\phi = 0
				\})
				= 0.
			$ 
			One can remark that for every $f, h \in L ^ p([0, 1])$, we have $T _ {\phi, \alpha} f = h$ if and only if 
			$
				h(x) = 
				f(\{x - \alpha\}) /
				\phi(\{x - \alpha\})
			$ 
			a.e. on $[0, 1].$
			
			Let $X _ 0$ and $Y _ 0$ be the two dense sets given by the Gethner-Shapiro Criterion, and let 
			$
				S \colon Y _ 0 \to Y _ 0
			$ 
			be the associated inverse map necessarily given by
			$
				S h (x) = 
				h(\{x - \alpha\}) /
				\phi(\{x - \alpha\})
			$ 
			a.e. on $[0, 1]$ for every $h\in Y _ 0.$
			Since $X _ 0$ and $Y _ 0$ are dense subsets of $L ^ p([0, 1])$, there exist $f \in X _ 0$ and $g \in Y _ 0$ such that 
			$
				m(\{\abs f > 1\}) \geq 3 / 4
			$ 
			and
			$
				m(\{\abs g > 1\}) \geq 3 / 4
			.$
			Otherwise one would have for every $f \in X _ 0$
			\begin{align*}
				\|f - \mathbf 2 \| _ p ^ p 
				\geq 
				\int _ {\{\abs{f} \leq 1\}}
					\abs{2 - \abs{f(x)}} ^ p
				dx
				\geq 
				m(\{\abs f \leq 1\}) 
				\geq 
				\frac 1 4,
			\end{align*}
			contradicting the density of $X _ 0$ in $L ^ p([0, 1])$ (a similar argument holds for $g \in Y _ 0$).
			Then by the Cauchy-Schwarz inequality, we have for every $k \geq 0$
			\begin{align*}
				\left(
					\int _ 0 ^ 1 
						\abs{f(\{x + n _ k \alpha\})g(x)} ^ {p / 2}
					dx 
				\right) ^ 2
				& \leq
				\int _ 0 ^ 1 
					\abs{
						\phi(x) \ldots \phi(\{x + (n _ k - 1) \alpha\}) 
						f(\{x + n _ k \alpha\})
					} ^ p
				dx \\
				& \phantom{\leq \int _ 0 ^ 1}
				\times 
				\int _ 0 ^ 1 
					\frac{
						\abs{
							g(x)
						} ^ p
					}{
						\abs{
							\phi(x) \ldots \phi(\{x + (n _ k - 1) \alpha\})
						} ^ p
					} 
				dx \\
				& \leq 
				\|T _ {\phi, \alpha} ^ {n _ k} f \| _ p ^ p 
				\int _ 0 ^ 1 
					\frac{
						\abs{
							g(\{y - n _ k \alpha\})
						} ^ p
					}{
						\abs{
							\phi(\{y - n_ k \alpha\}) \ldots \phi(\{y - \alpha\})
						} ^ p
					}
				dy \\
				& \leq
				\|T _ {\phi, \alpha} ^ {n _ k} f \| _ p ^ p 
				\|S ^ {n _ k} g \| _ p ^ p 
				\xrightarrow[k \to + \infty]{} 0.
			\end{align*}
			Hence there exists 
			a subsequence $(m _ k) _ {k \geq 0}$ of $(n _ k) _ {k \geq 0}$ such that 
			$
				f(\{x + m _ k \alpha\})g(x) \to 0
			$ 
			as $k \to + \infty$ a.e. on $[0, 1].$
			We consider now the sets $\omega = \{\abs f > 1\}$, $\Omega = \{\abs g > 1\}$ and
			$
				\Omega ' = \Omega \cap
				\{
					x \in [0, 1] ; 
					f(\{x + m _ k \alpha\}) g(x) \to 0
					\text{ as }
					k \to + \infty
				\}.
			$
			We have $m(\Omega ') = m(\Omega) \geq 3 / 4.$
			Since 
			\[
				\Omega ' \subset 
				\bigcup _ {K \geq 0} 
					\bigcap _ {k \geq K} 
						\{
							x \in [0, 1] ; 
							\{x + m _ k \alpha\} \notin \omega
						\}
			\]
			and since the sequence 
			$
				(\cap _ {k \geq K} 
					\{
						x \in [0, 1] ;
						\{x + m _ k \alpha\} \notin \omega
					\}
				) _ {K \geq 0}
			$
			is increasing, 
			it follows that the sequence 
			$
				(m(\cap _ {k \geq K}
					\{
						x \in [0, 1] ;
						\{x + m _ k \alpha\} \notin \omega
					\}
				)) _ {K \geq 0}
			$ 
			admits a limit satisfying
			\begin{align*}
				m(\Omega') 
				\leq
				\lim _ {K \to + \infty} 
					m\left(
						\bigcap _ {k \geq K} 
							\{x \in [0, 1] ; \{x + m _ k \alpha\} \notin \omega\}
					\right) 
				\leq 
				\liminf _ {K \to + \infty}
					m(R _ \alpha ^ {-m _ K} ([0, 1] \backslash \omega))
			\end{align*}
			where $R _ \alpha$ is the translation $R _ \alpha \colon x \in [0, 1] \mapsto \{x + \alpha\}$ which preserves the Lebesgue measure $m.$
			So 
			\[
				\frac 3 4
				\leq
				m(\Omega')
				\leq
				\liminf _ {K \to + \infty}
					m(
						R _ \alpha ^ {- m _ k}
							([0, 1] \backslash \omega)
					)
				\leq
				\liminf _ {K \to + \infty}
					m([0, 1] \backslash \omega)
				\leq 
				\frac 1 4,
			\]
			which is a contradiction.
		\end{proof}
	
	\section{Supercyclicity}
	
		We now consider a less restrictive dynamical property, called \textit{supercyclicity}, which has been much studied in the litterature.
		\begin{Def}[{\cite[Definition 2.16]{gro11}}]
			An operator $T \in \mathcal B(X)$ is said to be \textit{supercyclic} if there exists $x \in X$ such that the subset 
			$
				\{
					\lambda T ^ n x ;
					\lambda \in \mathbf C, 
					n \geq 0
				\}
			$ 
			is dense in $X.$
			In this case $x$ is called a supercyclic vector for $T.$
		\end{Def}
	
		The Birkhoff's Theorem admits a supercyclic version, which runs at follows.
					
		\begin{Thm}[{\cite[Theorem 1.12]{bay09}}]
			An operator $T \in \mathcal B(X)$ is supercyclic if and only if for every pair $(U, V)$ of non-empty open sets in $X$, there exist $n \geq 0$ and $\lambda \in \mathbf C$ such that $\lambda T ^ n (U) \cap V \ne \emptyset.$ 
			In this case, the set of supercyclic vectors for $T$ is a dense $G _ \delta$-set in $X.$
		\end{Thm}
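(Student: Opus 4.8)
The plan is to mimic the proof of Birkhoff's transitivity theorem, replacing orbits by projective orbits and exploiting that $X$ is a Baire space. Since $X$ is separable and metrizable, fix a countable base $(V_k)_{k \geq 1}$ of non-empty open sets. A vector $x \in X$ is supercyclic precisely when its projective orbit $\{\lambda T^n x : \lambda \in \mathbf C,\ n \geq 0\}$ meets every $V_k$, so the set $SC(T)$ of supercyclic vectors can be written as
\[
    SC(T) = \bigcap_{k \geq 1} G_k, \qquad G_k = \bigcup_{n \geq 0} \bigcup_{\lambda \in \mathbf C} (\lambda T^n)^{-1}(V_k).
\]
Each map $\lambda T^n$ is continuous, so every set $(\lambda T^n)^{-1}(V_k)$ is open and hence so is each $G_k$; thus $SC(T)$ is automatically a $G_\delta$-set, whatever $T$ may be.

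Next I would translate the topological transitivity condition into density of the $G_k$'s. Unfolding the definitions, $\lambda T^n(U) \cap V \ne \emptyset$ means exactly that $(\lambda T^n)^{-1}(V)$ meets $U$; hence the transitivity condition amounts to requiring that $\bigcup_{n, \lambda}(\lambda T^n)^{-1}(V)$ be dense for every non-empty open $V$, which, by passing to a basic $V_k \subset V$, is equivalent to each $G_k$ being dense. Granting this, the reverse implication together with the density statement follows at once from the Baire category theorem: if the transitivity condition holds, then each $G_k$ is a dense open set, and since $X$ is completely metrizable (Banach or Fréchet), the intersection $SC(T) = \bigcap_k G_k$ is a dense $G_\delta$-set, in particular non-empty, so $T$ is supercyclic.

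For the forward implication, suppose $x$ is a supercyclic vector and fix non-empty open sets $U, V$. Since the projective orbit of $x$ is dense, there are $\mu \in \mathbf C$ and $m \geq 0$ with $u := \mu T^m x \in U$. The projective orbit of $u$ equals $\{\nu T^k x : \nu \in \mathbf C,\ k \geq m\}$, i.e. the projective orbit of $x$ with the first $m$ complex lines $\{\nu T^j x : \nu \in \mathbf C\}$, $0 \le j < m$, removed. Here lies the one genuinely delicate point: I must check that discarding these finitely many lines does not destroy density. This is exactly where infinite-dimensionality enters, since each such line is a finite-dimensional, hence closed and nowhere dense, subset of $X$; a Baire space cannot be the union of the closure of the truncated orbit together with finitely many nowhere dense sets unless that closure is already all of $X$. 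Consequently $u$ is again supercyclic, its projective orbit meets $V$, and we obtain $\lambda T^n(U) \cap V \ne \emptyset$ for suitable $\lambda, n$, establishing the transitivity condition.

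I expect the main obstacle to be precisely this density-preservation step in the forward direction; the remainder is the verbatim Baire-category skeleton of Birkhoff's theorem, transposed from orbits to projective orbits.
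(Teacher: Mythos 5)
The paper does not prove this statement: it is quoted verbatim from \cite{bay09}*{Theorem 1.12}, so there is no in-paper argument to compare yours against. Your proof is correct and is essentially the classical Baire-category argument behind the cited result: the $G_\delta$ description $SC(T) = \bigcap_k G_k$ with each $G_k$ open, the translation of the transitivity condition into density of the $G_k$'s, the Baire theorem for the reverse implication, and, for the forward implication, the observation that a dense subset of $X$ remains dense after removing finitely many closed nowhere dense sets (here the finite-dimensional lines $\{\nu T^j x \; ; \; \nu \in \mathbf C\}$, which are closed and nowhere dense because $X$ is infinite-dimensional). The one point you should patch is at the start of the forward implication: you must choose $u = \mu T^m x \in U$ with $u \ne 0$, which is possible because $U \setminus \{0\}$ is a non-empty open set and the projective orbit of $x$ is dense; without this, $\mu$ could be $0$, and then your identification of the projective orbit of $u$ with the tail $\{\nu T^k x \; ; \; \nu \in \mathbf C, \, k \geq m\}$ fails, since the projective orbit of $0$ is just $\{0\}$.
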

	
		The so-called Supercyclicity Criterion is the most useful tool to prove that an operator is supercyclic.
		It is patterned after the Hypercyclicity Criterion.
		
		\begin{Thm}[Supercyclicity Criterion, {\cite[Lemma 3.1]{ber04}}]
			Let $T \in \mathcal B(X)$.
			Suppose that there exist dense subsets $X _ 0$ and $Y _ 0$ of $X,$ an increasing sequence $(n _ k) _ {k \geq 1}$ of integers, a sequence $(\lambda _ {n _ k}) _ {k \geq 1}$ of non-zero complex numbers and a sequence $(S _ {n _ k} \colon Y _ 0 \to X) _ {k \geq 1}$ of maps such that:
			\begin{enumerate}[(i)]
				\item
					$
							\lambda _ {n _ k} T ^ {n _ k} x 
							\to 0
					$ 
					as 
					$
						k \to + \infty
					$
					for every $x \in X _ 0$;
				\item
					$
							\lambda _ {n _ k} ^ {-1} S _ {n _ k} y 
							\to 0
					$ 
					as 
					$
						k \to + \infty
					$
					for every $y \in Y _ 0$;
				\item
					$
							T ^ {n _ k} S _ {n _ k} y 
							\to y
					$
					as 
					$
						k \to + \infty
					$
					for every $y \in Y _ 0.$
			\end{enumerate}
			Then the operator $T$ is supercyclic.
		\end{Thm}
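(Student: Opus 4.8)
The plan is to deduce supercyclicity from the supercyclic version of Birkhoff's transitivity theorem stated just above: it suffices to show that for every pair $(U, V)$ of non-empty open sets in $X$ there exist an integer $n \geq 0$ and a scalar $\lambda \in \mathbf C$ with $\lambda T ^ n (U) \cap V \ne \emptyset.$ So I would fix such a pair $(U, V)$ and, using the density of $X _ 0$ and $Y _ 0$, pick $x \in X _ 0 \cap U$ and $y \in Y _ 0 \cap V.$

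The key idea, patterned after the proof of the Hypercyclicity Criterion, is to perturb $x$ by a small vector built from the partial inverses $S _ {n _ k}$ and then to rescale its image by the very same factor. Concretely, I would set
\[
    u _ k = x + \lambda _ {n _ k} ^ {-1} S _ {n _ k} y \in X
    \quad \text{for every } k \geq 1,
\]
and consider, as rescaling parameters, the scalars $\lambda _ {n _ k}$ provided by the statement.

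The verification then splits into two limits. On the one hand, condition (ii) gives $\lambda _ {n _ k} ^ {-1} S _ {n _ k} y \to 0$, so that $u _ k \to x$; since $U$ is open and $x \in U$, it follows that $u _ k \in U$ for all large $k.$ On the other hand, expanding
\[
    \lambda _ {n _ k} T ^ {n _ k} u _ k
    = \lambda _ {n _ k} T ^ {n _ k} x + T ^ {n _ k} S _ {n _ k} y,
\]
conditions (i) and (iii) yield respectively $\lambda _ {n _ k} T ^ {n _ k} x \to 0$ and $T ^ {n _ k} S _ {n _ k} y \to y$, whence $\lambda _ {n _ k} T ^ {n _ k} u _ k \to y$; since $V$ is open and $y \in V$, we get $\lambda _ {n _ k} T ^ {n _ k} u _ k \in V$ for all large $k.$ Fixing any such $k$ and setting $n = n _ k$, $\lambda = \lambda _ {n _ k}$ produces a point $u _ k \in U$ with $\lambda T ^ n u _ k \in V$, that is $\lambda T ^ n (U) \cap V \ne \emptyset$, as required.

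I do not expect a serious obstacle: this is a direct transitivity computation, and the only point demanding care is the bookkeeping of the scalar $\lambda _ {n _ k}$. It is chosen precisely so that condition (ii) annihilates the perturbation $\lambda _ {n _ k} ^ {-1} S _ {n _ k} y$ inside $u _ k$, while the reciprocal factor recombines with condition (iii) through $\lambda _ {n _ k} T ^ {n _ k}(\lambda _ {n _ k} ^ {-1} S _ {n _ k} y) = T ^ {n _ k} S _ {n _ k} y \to y.$ Balancing these two normalizations against each other is exactly what makes the supercyclic version go through, where a single scalar would not suffice to establish hypercyclicity.
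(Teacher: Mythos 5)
Your proof is correct. Note that the paper itself does not prove this statement at all: it quotes the Supercyclicity Criterion directly from \cite[Lemma 3.1]{ber04}, so there is no internal proof to compare against. Your argument is the standard one: fix non-empty open sets $U, V$, pick $x \in X_0 \cap U$ and $y \in Y_0 \cap V$ by density, form $u_k = x + \lambda_{n_k}^{-1} S_{n_k} y$, and use linearity to get $\lambda_{n_k} T^{n_k} u_k = \lambda_{n_k} T^{n_k} x + T^{n_k} S_{n_k} y \to y$, while (ii) forces $u_k \to x$; the supercyclic transitivity theorem stated just above in the paper (which applies since $X$ is a separable Banach or Fr\'echet space) then yields supercyclicity. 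All the bookkeeping with the scalars checks out, so this is a complete and valid proof.
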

	
		We are looking for a supercyclic version of Theorem \ref{HCriterion}.
		To do so, we will prove that the Supercyclicity Criterion is equivalent to a Gethner-Shapiro-type Supercyclicity Criterion.
		For this we will need the notion of \textit{universality} which is a generalization of hypercyclicity applied to a family of operators $(T _ n) _ {n \geq 0}$ instead of the iterates of a unique operator $T.$
		
		\begin{Def}[{\cite[Definition 1.55]{gro11}}]
			Let $Y$ be a metric space.
			A sequence $(T _ n \colon X \to Y) _ {n \geq 0}$ of continuous maps is said to be \textit{universal} if there exists $x \in X$ such that its orbit
			$
				\{
					T _ n x ;
					n \geq 0
				\}
			$ 
			under $(T _ n) _ {n \geq 0}$ is dense in $Y.$
			In this case $x$ is called a universal vector for $(T _ n) _ {n \geq 0}.$
		\end{Def}
			
		\begin{Thm}[Universality Criterion, {\cite[Theorem 1.57]{gro11}}]\label{universalitycriterion}
			Let $Y$ be a separable metric space and $(T _ n \colon X \to Y) _ {n \geq 0}$ a family of continuous maps.
			The set 
			of universal vectors for $(T _ n) _ {n \geq 0}$ is dense in $X$ if and only if for every non-empty open sets $U$ and $V$ of $X$ and $Y,$ there exists $n \geq 0$ such that $T _ n (U) \cap V \ne \emptyset.$			
			In this case the set 
			of universal vectors for $(T _ n) _ {n \geq 0}$ is a dense $G _ \delta$-set in $X.$
		\end{Thm}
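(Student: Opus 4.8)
The plan is to recognize this as the Baire-category heart of all the transitivity-type theorems already stated (Birkhoff's theorem and its supercyclic version are special cases with $T_n = T^n$ and $Y = X$). The statement is an equivalence together with a $G_\delta$ conclusion, so I would split the work into three pieces: the easy implication, the hard implication, and the topological description of the universal set.

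First I would dispose of the forward implication. If the set of universal vectors is dense in $X$, then given non-empty open $U \subset X$ and $V \subset Y$, density produces a universal vector $x \in U$; by universality its orbit $\{T_n x : n \geq 0\}$ is dense in $Y$, so some $T_n x$ lands in $V$, whence $T_n(U) \cap V \neq \emptyset$. This direction uses only the definitions and requires no hypotheses beyond those stated.

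The substance is the converse, and here the natural tool is the Baire Category Theorem applied to $X$ (a complete metric space). Fix a countable base $(V_j)_{j \geq 1}$ of the separable metric space $Y$. The key observation is that a point $x$ is universal precisely when its orbit meets every $V_j$, i.e.
\[
	\{x \in X : x \text{ is universal for } (T_n)\}
	=
	\bigcap _ {j \geq 1}
		\bigcup _ {n \geq 0}
			T _ n ^ {-1}(V _ j).
\]
Each $T_n^{-1}(V_j)$ is open by continuity of $T_n$, so each set $G_j := \bigcup_{n \geq 0} T_n^{-1}(V_j)$ is open; the universal set is thus automatically a $G_\delta$. The hard part will be showing each $G_j$ is \emph{dense} in $X$: this is exactly where the transitivity hypothesis is consumed. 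Given any non-empty open $U \subset X$, I apply the hypothesis to the pair $(U, V_j)$ to obtain $n \geq 0$ with $T_n(U) \cap V_j \neq \emptyset$, which means there is a point of $U$ mapped into $V_j$, i.e. $U \cap T_n^{-1}(V_j) \neq \emptyset$, so $U \cap G_j \neq \emptyset$. Since $U$ was arbitrary, $G_j$ is dense.

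Finally I would invoke the Baire Category Theorem: $X$ is a separable infinite-dimensional Banach (or Fr\'echet) space, hence a complete metric space, so the countable intersection $\bigcap_{j \geq 1} G_j$ of dense open sets is a dense $G_\delta$. This set is non-empty, so there exists a universal vector, and in fact the universal set is dense, establishing the converse; moreover this very computation simultaneously yields the claimed dense $G_\delta$ structure, so no separate argument for the last sentence of the theorem is needed. The only subtlety worth flagging is the reliance on a countable base for $Y$, which is guaranteed by the separability assumption on $Y$ and is precisely the hypothesis that lets Baire category turn a purely combinatorial transitivity condition into the existence of a single vector whose orbit is dense.
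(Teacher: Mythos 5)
Your proof is correct and is exactly the standard argument: the paper does not prove this statement itself (it is quoted from \cite[Theorem 1.57]{gro11}), and your route --- writing the universal set as $\bigcap_{j \geq 1} \bigcup_{n \geq 0} T_n^{-1}(V_j)$ over a countable base $(V_j)_{j \geq 1}$ of the separable metric space $Y$, showing each union is open (continuity) and dense (transitivity applied to $(U, V_j)$), and applying the Baire Category Theorem to the complete metric space $X$ --- is precisely the proof in that reference. No gaps; the forward implication and the $G_\delta$ conclusion are handled correctly as well.
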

	
		With some adjustements in the proof of the equivalence between the Hypercyclicity Criterion and the Gethner-Shapiro Criteron, given in \cite[Theorem 3.22]{gro11}, one can prove that the Supercyclicity  Criterion is equivalent to the following Gethner-Shapiro-type Supercyclicity Criterion.
		
		\begin{Thm}\label{GStype}
			An operator $T \in \mathcal B(X)$ satisfies the Supercyclicity Criterion if and only if the following holds true:
			there exist dense subsets $X _ 0$ and $Y _ 0$ of $X$, an increasing sequence $(n _ k) _ {k \geq 1}$ of integers, a sequence $(\lambda _ {n _ k}) _ {k \geq 1}$ of non-zero complex numbers and a map $S \colon Y _ 0 \to Y _ 0$ such that:
			\begin{enumerate}[(i)]
				\item
					$
							\lambda _ {n _ k} T ^ {n _ k} x
						\to 0
					$
					as 
					$
						k \to + \infty
					$ 
					for every $x \in X _ 0$;
				\item
					$
							\lambda _ {n _ k} ^ {-1} S ^ {n _ k} y
						\to 0
					$
					as 
					$
						k \to + \infty
					$ 
					for every $y \in Y _ 0$;
				\item
					$
						T S y = y
					$
					for every $y \in Y _ 0.$
			\end{enumerate}
		\end{Thm}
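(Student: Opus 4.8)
The plan is to prove the two implications separately, following the template of the hypercyclic equivalence in \cite[Theorem 3.22]{gro11} referenced just above, and inserting the nonzero scalars $\lambda_{n_k}$ where they are needed. The implication from the Gethner-Shapiro-type criterion to the Supercyclicity Criterion is immediate. Given dense sets $X_0, Y_0$, a sequence $(n_k)$, scalars $(\lambda_{n_k})$ and a map $S\colon Y_0 \to Y_0$ satisfying (i)--(iii), I would simply set $S_{n_k} := S^{n_k}$, which maps $Y_0$ into $Y_0 \subset X$. Then conditions (i) and (ii) of the Supercyclicity Criterion are literally conditions (i) and (ii) here. For condition (iii), since $S$ maps $Y_0$ into itself and $TSy = y$ for every $y \in Y_0$, an immediate induction gives $T^{n} S^{n} y = y$ for all $n \geq 0$ and all $y \in Y_0$; in particular $T^{n_k} S_{n_k} y = y \to y$, even without passing to the limit. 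Hence $T$ satisfies the Supercyclicity Criterion.

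The substance is the converse. Here I would adapt the forward direction of \cite[Theorem 3.22]{gro11}, which from the sequence of partial inverses $(S_{n_k})$ produces a single map $S$ that is an \emph{exact} right inverse of $T$ on a suitable dense set (so that $TSy = y$, rather than merely $T^{n_k} S_{n_k} y \to y$), together with a new subsequence along which the iterates $T^{n_k}$ and $S^{n_k}$ tend to $0$ on dense sets. That argument reformulates the criterion as a topological transitivity statement, produces a dense $G_\delta$-set of well-behaved vectors through the Universality Criterion (Theorem \ref{universalitycriterion}), and then assembles $S$ by a diagonal/telescoping construction. The adjustment for supercyclicity is to carry the nonzero weights $\lambda_{n_k}$ throughout: the transitivity/universality step is applied to a scalar-weighted family of maps rather than to the plain iterates, so that the resulting dense $G_\delta$ records approximate backward orbits weighted by $\lambda_{n_k}^{-1}$, reflecting supercyclicity instead of hypercyclicity.

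The main obstacle I anticipate is the scalar bookkeeping inside this diagonal construction. Condition (iii) is scalar-free, since $T^{n_k} S^{n_k} y = y$ holds exactly once $S$ is an exact right inverse, so the two surviving requirements $\lambda_{n_k} T^{n_k} x \to 0$ and $\lambda_{n_k}^{-1} S^{n_k} y \to 0$ involve disjoint data and reciprocal weights. The delicate point is that $|\lambda_{n_k}|$ and $|\lambda_{n_k}|^{-1}$ may be unbounded, so the successive approximation errors of the construction must be controlled against these weights: when forcing $S^{n_k} y$ to approximate $S_{n_k} y$ one must keep the error small relative to $|\lambda_{n_k}|$, and symmetrically on the $T$-side relative to $|\lambda_{n_k}|^{-1}$. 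Since in the Supercyclicity Criterion the weights already appear reciprocally in (i) and (ii), this can be arranged by choosing the construction's tolerances in terms of $|\lambda_{n_k}|^{\pm 1}$; verifying that the telescoping still closes up with these weighted tolerances is the part that requires genuine care.
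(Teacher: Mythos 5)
Your first implication is correct and complete: setting $S_{n_k} = S^{n_k}$ and observing $T^{n}S^{n}y = y$ by induction is exactly why the paper treats that direction as immediate. The gap is in the converse, which is the entire content of the theorem. There you only name the scaffolding (transitivity reformulation, Universality Criterion, backward orbits), and the one concrete thing you say about executing it --- that one must run a diagonal/telescoping construction whose tolerances are calibrated against $\abs{\lambda_{n_k}}^{\pm 1}$, ``forcing $S^{n_k}y$ to approximate $S_{n_k}y$'' --- does not correspond to the actual argument and would lead you astray. In the paper's proof (adapted from \cite[Theorem 3.22]{gro11}) there are no tolerances and no approximation of $S_{n_k}$ by $S^{n_k}$: the maps $S_{n_k}$ from the Supercyclicity Criterion are used exactly once, to show that the weighted family $(\lambda_{n_k}\mathcal T^{n_k})_{k \geq 1}$ is topologically transitive on the Fr\'echet space $\mathcal X$ of backward $T$-orbits $(x_n)_{n \in \mathbf N}$ with $Tx_{n+1} = x_n$ (Mittag-Leffler supplying the density of the set of vectors admitting such orbits), and after that they are discarded.

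Once this transitivity is in hand, the scalars require no bookkeeping at all: transitivity of $(\lambda_{n_k}\mathcal T^{n_k})_{k\geq 1}$ gives transitivity of $(\lambda_{n_k}^{-1}\mathcal B^{n_k})_{k\geq 1}$, where $\mathcal B$ is the backward shift on $\mathcal X$, and the Universality Criterion (Theorem \ref{universalitycriterion}) plus Baire produces a single $y = (y_n)_{n \in \mathbf N} \in \mathcal X$ universal for \emph{both} weighted families simultaneously. One then takes $Y_0' = \{\lambda y_n ;\ \lambda \in \mathbf C \backslash \{0\},\ n \in \mathbf N\}$, which is dense because the weighted backward orbit of $y$ is dense in $\mathcal X$, and defines $S(\lambda y_n) = \lambda y_{n+1}$: condition (iii) is then \emph{exact} by construction, condition (ii) holds along a subsequence $(m_k)_{k \geq 1}$ of $(n_k)_{k \geq 1}$ chosen so that $\lambda_{m_k}^{-1}\mathcal B^{m_k}y \to 0$ (possible since $0$ lies in the closure of that orbit), and condition (i) is inherited from the original $X_0$ along any subsequence. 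The points that genuinely require care --- and which your sketch does not touch --- are the well-definedness of $S$ on $Y_0'$ (if $\lambda y_n = \mu y_m$ with $n < m$, the weighted forward orbit of $y_m$ would be trapped in a finite-dimensional subspace, contradicting universality) and the remark that replacing $(n_k)_{k \geq 1}$ by a subsequence is allowed by the statement. So: right skeleton, same route as the paper, but the decisive construction is missing, and the difficulty you budgeted for is not the one that actually occurs.
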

		\begin{proof}
			We only have to prove that the Supercyclicity Criterion implies the criterion given by Theorem \ref{GStype}.
			So let us suppose that $T$ satisfies the Supercyclicity Criterion. 
			Since $T$ has dense range the Mittag-Leffler's Theorem, as stated in \cite[Theorem 3.21]{gro11} with $X _ n = X$ and $f _ n = T \colon X \to X$ for every $n \in \mathbf N$, gives that the subspace
			\[
				Y = 
				\left\{
					x \in X ;
					\text{ there exists }
					(x _ n) _ {n \in \mathbf N} \in X ^ \mathbf N, 
					x = x _ 1 
					\text{ and }
					T x _ {n + 1} = x _ n
					\text{
						for every 
					}
					n \in \mathbf N
				\right\}
			\]
			is dense in $X.$
			Consider $X ^ \mathbf N$ endowed with the product topology, and its closed subspace
			\[
				\mathcal X =
				\left\{
					(x _ n) _ {n \in \mathbf N} \in X ^  \mathbf N;
					T x _ {n + 1} = x _ n
					\text{
						for every 
					}
					n \in \mathbf N
				\right\},
			\]			
			which is a separable Fréchet space endowed with the induced topology.
			We consider the operator $\mathcal T \colon \mathcal X \to \mathcal X$ defined by
			$
				\mathcal T(x _ 1, x _ 2, \ldots) =
				(Tx _ 1, T x _ 2, \ldots)
			$ 
			for every $(x _ 1, x _ 2, \ldots) \in \mathcal X$ and its inverse $\mathcal B \colon \mathcal X \to \mathcal X$ defined by 
			$
				\mathcal B(x _ 1, x _ 2, \ldots) =
				(x _ 2, x _ 3, \ldots)
			$ 
			for every $(x _ 1, x _ 2, \ldots) \in \mathcal X.$
			Let us now prove that the family $(\lambda _ {n _ k} \mathcal T ^ {n _ k}) _ {k \geq 1}$ is topologically transitive on $\mathcal X$, that is to say that for every pair $(\mathcal U _ 0, \mathcal V _ 0)$ of non-empty open sets in $\mathcal X$, there exists $k \geq 1$ such that $\lambda _ {n _ k} \mathcal T ^ {n _ k}(\mathcal U _ 0) \cap \mathcal V _ 0 \ne \emptyset.$
			Given $\mathcal U _ 0$ and $\mathcal V _ 0$, there exist $N \geq 1$ and non-empty open sets $U _ 1, \ldots, U _ N, V _ 1, \ldots, V _ N$ of $X$ such that 
			\[
				\mathcal U = 
				\{
					(x _ n) _ {n \in \mathbf N} \in \mathcal X; 
					x _ j \in U _ j
					\text{
						for every 
					}
					j \in \{1, \ldots, N\}
				\} 
				\subset
				\mathcal U _ 0
			\]
			and
			\[
				\mathcal V =
				\{
					(y _ n) _ {n \in \mathbf N} \in \mathcal X ;
					y _ j \in V _ j
					\text{ for every }
					j \in \{1, \ldots, N\}
				\}
				\subset
				\mathcal V _ 0.
			\]
			Let $x = (x _ n) _ {n \in \mathbf N} \in \mathcal U$ and $y = (y _ n) _ {n \in \mathbf N} \in \mathcal V.$
			Since $T ^ j x _ N = x _ {N - j} \in U _ {N - j}$ and $T ^ j y _ N = y _ {N - j} \in V _ {N - j}$ whenever $j \in \{1, \ldots, N\}$, there exist non-empty open neighborhoods $U _ N' \subset U _ N$ and $V _ N' \subset V _ N$ of $x _ N$ and $y _ N$ such that $T ^ j (U _ N ') \subset U _ {N - j}$ and $T ^ j(V _ N') \subset V _ {N - j}$ whenever $j \in \{1, \ldots, N\}.$
			Since $T$ satisfies the Supercyclicity Criterion, there exists $k \geq 1$ such that $\lambda _ {n _ k} T ^ {n _ k}(U _ N') \cap V _ N' \ne \emptyset$, and hence there exists a non-empty open set $U _ N'' \subset U _ N'$ such that $\lambda _ {n _ k} T ^ {n _ k}(U _ N'') \subset V _ N'.$
			Let $u _ N \in Y \cap U _ N''$, which exists by density of $Y.$
			Since $u _ N \in Y$, there exists $(u _ n) _ {n \geq N}$ such that $T u _ {n + 1} = u _ n$ whenever $n \geq N.$
			Moreover we consider $u _ j = T ^ {N - j} u _ N \in U _ j$ whenever $j \in \{1, \ldots, N - 1\}$ since $u _ N \in U _ N'' \subset U _ N'.$
			So $u = (u _ n) _ {n \in \mathbf N} \in \mathcal U$ and besides $\lambda _ {n _ k} \mathcal T ^  {n _ k} u \in \mathcal V.$
			Indeed $\lambda _ {n _ k} \mathcal T ^ {n _ k} u = (\lambda _ {n _ k} T ^ {n _ k}u _ 1, \lambda _ {n _ k} T ^ {n _ k} u _ 2, \ldots)$ and
			$
				\lambda _ {n _ k} T ^ {n _ k} u _ j 
				=
				\lambda _ {n _ k} T ^ {n _ k} T ^ {N - j} u _ N 
				=
				T ^ {N - j}(\lambda _ {n _ k} T ^ {n _ k} u _ N) 
				\in 
				T ^ {N - j}(V _ N') 
				\subset
				V _ j
			$
			whenever $j \in \{1, \ldots, N\}$ because $\lambda _ {n _ k} T ^ {n _ k} u _ N \in \lambda _ {n _ k} T ^ {n _ k}(U _ N'') \subset V _ N'.$
			Thus $\lambda _ {n _ k} \mathcal T ^ {n _ k} u \in \lambda _ {n _ k} \mathcal T ^ {n _ k}(\mathcal U) \cap \mathcal V$, which is hence non-empty.
			
			Let us prove now that there exist a dense subset $Y _ 0'$ of $X$, a map $S \colon Y _ 0' \to Y _ 0'$ and a subsequence $(m _ k) _ {k \geq 1}$ of $(n _ k) _ {k \geq 1}$ such that $TSy = y$ and 
			$
				\lambda _ {m _ k} ^ {-1} S ^ {m _ k} y 
				\to 0
			$
			as 
			$
				k \to + \infty
			$
			whenever $y \in Y _ 0'.$
			Since $(\lambda _ {n _ k} \mathcal T ^ {n _ k}) _ {k \geq 1}$ is topologically transitive, so is $(\lambda _ {n _ k} ^ {-1} \mathcal B ^ {n _ k}) _ {k \geq 1}$ and the sets 
			\[
				\{
					x \in \mathcal X ;
					\{
						\lambda _ {n _ k} \mathcal T ^ {n _ k} x ;
						k \geq 1
					\}
					\text{ is dense in } \mathcal X
				\}
			\text{
			and
			}
				\left\{
					x \in \mathcal X ;
					\left\{
						\lambda _ {n _ k} ^ {-1} \mathcal B ^ {n _ k} x ;
						k \geq 1
					\right\}
					\text{ is dense in } \mathcal X
				\right\}
			\]
			are $G _ \delta$-subsets of $\mathcal X$ by Theorem \ref{universalitycriterion}.
			So there exists $y = (y _ n) _ {n \in \mathbf N} \in \mathcal X$ such that the sets 
			\[
				\{
					\lambda _ {n _ k} \mathcal T ^ {n _ k} y ;
					k \geq 1
				\} =
				\{
					(
						\lambda _ {n _ k} T ^ {n _ k} y _ 1,
						\lambda _ {n _ k} T ^ {n _ k} y _ 2, 
						\ldots
					) ;
					k \geq 1
				\}
			\]
			and
			\[
				\left\{
					\lambda _ {n _ k} ^ {-1} \mathcal B ^ {n _ k} y ;
					k \geq 1
				\right\} =
				\left\{
					(
						\lambda _ {n _ k} ^ {-1} y _ {1 + n _ k},
						\lambda _ {n _ k} ^ {-1} y _ {2 + n _ k},
						\ldots
					) ;
					k \geq 1
				\right\}
			\]
			are dense in $\mathcal X.$
			We now consider the subset 
			$
				Y _ 0' = 
				\{
					\lambda y _ n ;
					\lambda \in \mathbf C \backslash \{0\}
					\text{ and }
					n \in \mathbf N
				\}
			$ 
			of $X$, which is dense in $X$ by density of the projection onto the first coordinate of the dense subset
			$
				\{
					\lambda _ {n _ k} ^ {-1} \mathcal B ^ {n _ k}y ;
					k \geq 1
				\}
			$ 
			in $\mathcal X.$
			Let us consider the map 
			$
				S \colon Y _ 0' \to Y _ 0 '
			$ 
			defined by 
			$
				S(\lambda y _ n) = \lambda y _ {n + 1}
			$
			for every $\lambda y _ n \in Y _ 0'$.
			It is well defined because if $\lambda y _ n = \mu y _ m$ with $n < m$, then $y _ n = (\mu / \lambda) y _ m$ and $T ^ {m - n} y _ m = y _ n = (\mu / \lambda) y _ m$ since $y \in \mathcal X.$
			Hence the subset  
			\[
				\{
					\lambda _ {n _ k} T ^ {n _ k} y _ m ;
					k \geq 1
				\} \subset
				\text{span}[T ^ k y _ m ; k \in \{0, \ldots, m - n - 1\}]
			\]
			could not be dense in $X$, which would contradict the density of the projection onto the m-th coordinate of the dense subset  
			$
				\{
					\lambda _ {n _ k} \mathcal T ^ {n _ k} y ;
					k \geq 1
				\}
			$ 
			of $\mathcal X.$
			
			Moreover $TS\lambda y _ n = \lambda T y _ {n + 1} = \lambda y _ n$ whenever $\lambda y _ n \in Y _ 0'$ because $y \in \mathcal X.$
			Since the subset
			$
				\left\{
					\lambda _ {n _ k} ^ {-1} \mathcal B ^ {n _ k} y ;
					k \geq 1
				\right\}
				=
				\left\{
					\left(
						\lambda _ {n _ k} ^ {-1} y _ {1 + n _ k},
						\lambda _ {n _ k} ^ {-1} y _ {2 + n _ k},
						\ldots
					\right)
				\right\}
			$
			is dense in $\mathcal X$, there exists a subsequence $(m _ k) _ {k \geq 1}$ of $(n _ k) _ {k \geq 1}$ such that 
			$
				\lambda _ {m _ k} ^ {-1} \mathcal B ^ {m _ k} y 
				= 
					\left(
						\lambda _ {m _ k} ^ {-1} y _ {1 + m _ k},
						\lambda _ {m _ k} ^ {-1} y _ {2 + m _ k},
						\ldots
					\right)
				\to 0
			$
			as
			$
				k\to + \infty
			$
			and	hence 
			$
					\lambda _ {m _ k} ^ {-1} S ^ {m _ k} \lambda y _ n
				=
					\lambda \lambda _ {m _ k} ^ {-1} y _ {n + m _ k}
				\to 0
			$ 
			as 
			$	
				k \to + \infty
			$
			whenever $\lambda y _ n \in Y _ 0'.$
			Eventually since $T$ satisfies the Supercyclicity Criterion, we consider the dense subset $X _ 0' = X _ 0$ of $X$ which has the property that 
			$
				\lambda _ {n _ k} T ^ {n _ k} x
				\to 0
			$
			as 
			$ 
				k \to + \infty
			$ 
			whenever $x \in X _ 0'.$
			We have thus shown that $T$ satisfies the following three properties:
			\begin{enumerate}[(i)]
				\item
					$
							\lambda _ {m _ k} T ^ {m _ k} x
						\to 0
					$
					as 
					$
						k \to + \infty
					$
					for every
					$
						x \in X _ 0';
					$
				\item 
					$
							\lambda _ {m _ k} ^ {-1} S ^ {m _ k} y
						\to 0
					$
					as 
					$
						k \to + \infty
					$
					for every
					$
						y \in  Y _ 0';
					$
				\item 
					$
						TSy = y
						\text{ for every }
						y \in Y _ 0',
					$
			\end{enumerate}
			and this terminates the proof.
		\end{proof}

		\subsection{Supercyclicity of Bishop operators}
		
		In order to prove that the Bishop operator cannot be supercyclic we will need the so-called \textit{Positive Supercyclicity Theorem}, proved by Le\'on-Saavedra and Müller in \cite[Corollary 4]{leo04}, see also \cite[Corollary 3.4]{bay09}.
		We will denote by $\sigma _ p(T)$ the set of eigenvalues of an operator $T \in \mathcal B(X).$
		
		\begin{Thm}[Positive Supercyclicity Theorem, {\cite[Corollary 3.4]{bay09}}]\label{positivesupercyclicity}
			Let $T \in \mathcal B(X)$.
			If $\sigma _ p(T ^ *) = \emptyset$, then $x \in X$ is supercyclic for $T$ if and only if 
			the set 
			$
				\{
					a T ^ n x ; a \in (0, +\infty), n \geq 0
				\}
			$
			is dense in $X.$
		\end{Thm}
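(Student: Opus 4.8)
The only non-trivial implication is that supercyclicity of $x$ forces the positive cone orbit to be dense, since $(0, +\infty) \subset \mathbf C$ makes the converse immediate. So the plan is to assume that $x$ is supercyclic for $T$ and to prove that
\[
	A := \overline{\{a T ^ n x ; a > 0, n \geq 0\}} = X.
\]
Note that $A$ is a closed cone: it is invariant under multiplication by positive reals and, since $T(a T ^ n x) = a T ^ {n + 1} x$, it is invariant under $T$.

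First I would upgrade supercyclicity to a statement about the whole circle $\mathbf T = \{\lambda \in \mathbf C ; \abs \lambda = 1\}$. Writing any $\lambda \in \mathbf C \backslash \{0\}$ as $\abs \lambda \cdot (\lambda / \abs \lambda)$ and noting that $0 \in A$ (take $a \to 0$), one sees that $\{\lambda T ^ n x ; \lambda \in \mathbf C, n \geq 0\} \subset \mathbf T \cdot A$. Conversely $\mathbf T \cdot A$ is closed: if $\lambda _ k z _ k \to w$ with $\lambda _ k \in \mathbf T$ and $z _ k \in A$, then compactness of $\mathbf T$ lets me extract $\lambda _ k \to \lambda$, whence $z _ k = \overline{\lambda _ k}(\lambda _ k z _ k) \to \overline \lambda w \in A$ and $w = \lambda (\overline \lambda w) \in \mathbf T \cdot A$. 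Since the scalar orbit of $x$ is dense, this yields $\mathbf T \cdot A = X$.

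Next I would analyse the rotations that stabilise $A$. Set $\mathbb G = \{\lambda \in \mathbf T ; \lambda x \in A\}$. Because $A$ is closed, $T$-invariant and invariant under multiplication by positive reals, the condition $\lambda x \in A$ is equivalent to $\lambda A \subset A$; it follows that $\mathbb G$ is a closed subsemigroup of $\mathbf T$ containing $1$, and a closed subsemigroup of a compact group is automatically a subgroup. The closed subgroups of $\mathbf T$ are $\mathbf T$ itself and the finite cyclic groups, so either $\mathbb G = \mathbf T$, in which case $\mathbf T \cdot A = A$ and hence $A = X$, finishing the proof, or else $\mathbb G$ is finite.

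The heart of the matter, and the step I expect to be the main obstacle, is to rule out the finite case by manufacturing an eigenvalue of $T ^ *$, contradicting $\sigma _ p(T ^ *) = \emptyset$. When $\mathbb G$ is finite, $x$ has only finitely many unimodular multiples lying in $A$, so the phases needed to approximate the rotated vectors $e ^ {i \theta} x$ by orbit elements $\lambda _ k T ^ {n _ k} x$ sweep out the whole circle continuously as $\theta$ varies. Exploiting the $T$-invariance of each sheet $\lambda A$ together with a weak-$*$ compactness argument, one extracts a non-zero functional $\phi \in X ^ *$ and a unimodular $\mu$ with $T ^ * \phi = \mu \phi$. This is exactly the rotation argument of Le\'on-Saavedra and M\"uller from \cite{leo04}, and its careful execution, keeping track both of the phases and of the non-convexity of the cone $A$, is where the real work lies; the resulting conclusion $\sigma _ p(T ^ *) \neq \emptyset$ contradicts the hypothesis and forces $\mathbb G = \mathbf T$, completing the argument.
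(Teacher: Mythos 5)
First, a remark on context: the paper does not prove this statement at all --- it is quoted as a black box from \cite[Corollary 3.4]{bay09} (equivalently \cite[Corollary 4]{leo04}) --- so your attempt has to stand as a self-contained proof, and it does not. The reductions you do carry out are correct and standard: $A$ is a closed cone, $\mathbf T \cdot A$ is closed by compactness of $\mathbf T$ and hence equals $X$ by supercyclicity; the stabilizer $\mathbb G = \{\lambda \in \mathbf T ; \lambda x \in A\} = \{\lambda \in \mathbf T ; \lambda A \subset A\}$ is a closed subsemigroup, hence a closed subgroup of $\mathbf T$, hence either all of $\mathbf T$ (in which case $A = \mathbf T \cdot A = X$ and you are done) or a finite cyclic group.

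The genuine gap is the finite case, which you do not prove: you assert that ``one extracts a non-zero functional $\phi$ and a unimodular $\mu$ with $T^* \phi = \mu \phi$'' and refer for this to ``the rotation argument of Le\'on-Saavedra and M\"uller from \cite{leo04}''. But the implication ``$\mathbb G$ finite $\Rightarrow \sigma_p(T^*) \neq \emptyset$'' \emph{is} the content of the theorem being proved --- it is essentially the cited result itself --- so invoking \cite{leo04} at that point makes the attempt circular, and everything established before it is routine. Moreover, the heuristic you offer in its place does not correspond to an argument that works as stated: $A$ is a non-convex cone, so no Hahn--Banach separation produces functionals, and you exhibit no family of functionals from which a weak-$*$ limit could be extracted; likewise there is no meaningful sense in which the approximating phases ``vary continuously'' with $\theta$. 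The known proofs (in \cite{leo04}, and in Chapter 3 of \cite{bay09}) go through the connectedness/semigroup machinery that also underlies Ansari's theorem and the Le\'on--M\"uller rotation theorem, and they must use the hypothesis $\sigma_p(T^*) = \emptyset$ in an essential, quantitative way --- as the example of the identity operator on $\mathbf C$ (supercyclic, stabilizer $\{1\}$, not positively supercyclic, $\sigma_p(T^*) = \{1\}$) shows, no soft topological argument can rule out the finite case. That entire difficulty is missing from your proposal.
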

	
		We will also need the following straightforward lemma.
		
		\begin{Lem}\label{density}
			If a subset $A$ is dense in $L ^ p([0, 1])$, then the set of the Lebesgue measures 
			$
				\{
					m(\{
						\Re(f) > 0
					\}) ;
					f \in A
				\}
			$
			is dense in $[0, 1].$
		\end{Lem}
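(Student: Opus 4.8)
The plan is to show that every $t \in [0,1]$ lies in the closure of $S := \{m(\{\Re(f) > 0\}) ; f \in A\}$. The key difficulty is that the functional $f \mapsto m(\{\Re(f) > 0\})$ is not continuous on $L^p([0,1])$: a small $L^p$-perturbation of a function whose real part vanishes on a set of positive measure can change this quantity drastically. I would circumvent this by approximating, for each target value $t$, a specially chosen function whose real part is bounded away from $0$, so that the functional becomes stable under the a.e.\ convergence that $L^p$ convergence provides along a subsequence.

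Concretely, fix $t \in [0,1]$ and set $g_t = \mathbf 1_{[0,t]} - \mathbf 1_{(t,1]}$. This is a real-valued function taking only the values $\pm 1$, so $\{\Re(g_t) > 0\} = [0,t]$ and $m(\{\Re(g_t) > 0\}) = t$. By density of $A$ in $L^p([0,1])$, I choose a sequence $(f_n)_{n \geq 0}$ in $A$ with $f_n \to g_t$ in $L^p([0,1])$, and pass to a subsequence (not relabelled) along which $f_n \to g_t$ almost everywhere, hence $\Re(f_n) \to g_t$ a.e.\ on $[0,1]$.

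The main computation is then to show that $m(\{\Re(f_n) > 0\}) \to t$. Since $g_t = 1 > 0$ a.e.\ on $[0,t]$ and $g_t = -1 < 0$ a.e.\ on $(t,1]$, the a.e.\ convergence $\Re(f_n) \to g_t$ forces the indicator functions $\mathbf 1_{\{\Re(f_n) > 0\}}$ to converge pointwise a.e.\ to $\mathbf 1_{[0,t]}$: at almost every point of $[0,t]$ one eventually has $\Re(f_n) > 0$, while at almost every point of $(t,1]$ one eventually has $\Re(f_n) < 0$. These indicators are bounded by the constant function $1$, which is integrable on $[0,1]$, so the Dominated Convergence Theorem yields $m(\{\Re(f_n) > 0\}) = \int_0^1 \mathbf 1_{\{\Re(f_n) > 0\}}\,dx \to \int_0^1 \mathbf 1_{[0,t]}\,dx = t$.

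Consequently, for every $\varepsilon > 0$ there is some $f_n \in A$ with $\abs{m(\{\Re(f_n) > 0\}) - t} < \varepsilon$, so $t$ belongs to the closure of $S$. As $t \in [0,1]$ was arbitrary, $S$ is dense in $[0,1]$. The only subtle point is the choice of a target function whose real part is bounded away from $0$; once that is fixed, everything reduces to the standard subsequence-plus-dominated-convergence argument, so I do not expect any genuine obstacle beyond being careful about the null set on which a.e.\ convergence may fail.
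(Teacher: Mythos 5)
Your proof is correct: the paper states this lemma without proof (calling it ``straightforward''), and your argument is exactly the natural one that fills in the details. The key point --- targeting a function such as $g_t = \mathbf 1_{[0,t]} - \mathbf 1_{(t,1]}$ whose real part is bounded away from $0$, so that the otherwise discontinuous functional $f \mapsto m(\{\Re(f) > 0\})$ behaves well under a.e.\ convergence along a subsequence --- is precisely what makes the ``straightforward'' claim rigorous, and the dominated convergence step is carried out correctly.
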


		Davie proved in \cite[Theorem 2]{dav74} that the Bishop operator $T _ \alpha$ has no eigenvalue whatever the value of $\alpha \in [0, 1].$
		The same ideas are recalled and generalized by Flattot in \cite[Theorem 2.1]{fla08}.
		In fact, the approach proposed by Flattot in \cite{fla08} allows us to prove that the adjoint of the Bishop operator 
		$
			T _ \alpha ^ * 
			\in 
			\mathcal B(L ^ {p'}([0, 1])),
		$
		defined for every $f \in L ^ {p'}([0, 1])$ by
		\[
			T _ \alpha ^ * f(x) = \{x - \alpha\} f(\{x - \alpha\})
			\quad
			\text{a.e. on }
			[0, 1]
		\] 
		with $1 / p + 1 / p' = 1$, has no eigenvalue.
		To do so, we will need the following 
		lemma.
		
		\begin{Lem}[{\cite[Lemma 2.2]{fla08}}]\label{convex}
			If $f, g \colon [a, b] \to \mathbf R$ are increasing, convex, non-negative functions, then the product $fg$ also satisfies these properties.
			
			Moreover if $f(a) = 0$ then 
			$
			m(\{
			x \in [a, b] ;
			\abs{1 - f(x)} > 1 / 2
			\}) \geq (b - a) / 3.
			$
		\end{Lem}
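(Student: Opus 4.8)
The plan is to treat the two assertions separately: the first is essentially algebraic while the second is genuinely geometric, and I expect the measure estimate to be the real obstacle. For the first assertion, non-negativity and monotonicity of $fg$ are immediate, since for $a \le x \le y \le b$ one has $0 \le f(x) \le f(y)$ and $0 \le g(x) \le g(y)$, whence $f(x)g(x) \le f(y)g(x) \le f(y)g(y)$. To obtain convexity without assuming any smoothness, I would argue directly from the definition. Fix $x < y$ in $[a,b]$ and $t \in [0,1]$, and put $z = tx+(1-t)y$. Convexity of $f$ and $g$ gives $0 \le f(z) \le tf(x)+(1-t)f(y)$ and $0 \le g(z) \le tg(x)+(1-t)g(y)$, and since all quantities involved are non-negative these two inequalities may be multiplied. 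It then suffices to check that
\[
	\bigl(tf(x)+(1-t)f(y)\bigr)\bigl(tg(x)+(1-t)g(y)\bigr)
	\le
	t\,f(x)g(x)+(1-t)f(y)g(y).
\]
Expanding, the difference of the right-hand side and the left-hand side equals $t(1-t)\bigl(f(x)-f(y)\bigr)\bigl(g(x)-g(y)\bigr)$, which is non-negative because $f$ and $g$ are increasing (both factors are $\le 0$). Hence $f(z)g(z) \le t f(x)g(x)+(1-t)f(y)g(y)$, so $fg$ is convex.

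For the second assertion I would locate the band on which $f$ stays within $1/2$ of $1$. Since $f$ is increasing, set $u = \inf\{x \in [a,b]: f(x) \ge 1/2\}$ and $v = \sup\{x \in [a,b]: f(x) \le 3/2\}$; then, up to a null set, $\{x : \abs{1-f(x)} > 1/2\} = [a,u)\cup(v,b]$, of measure $(u-a)+(b-v)$. The engine of the estimate is the standard fact that, because $f(a)=0$, convexity makes the secant-slope $x \mapsto f(x)/(x-a)$ non-decreasing on $(a,b]$. First note that $f(a)=0$ together with the chord bound $f(x) \le \tfrac{x-a}{b-a}f(b)$ forces $f(x) \to 0$ as $x \to a^+$, so $u > a$ and the slope is genuinely defined near $u$. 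Letting $x \downarrow u$ and $x \uparrow v$, and using $f(u^+) \ge 1/2$ and $f(v^-) \le 3/2$, monotonicity of the slopes yields $\tfrac{1}{2(u-a)} \le \tfrac{3}{2(v-a)}$, that is $v-a \le 3(u-a)$.

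Writing $P=u-a$, $Q=v-u$ and $R=b-v$, so that $P+Q+R=b-a$, this reads $Q \le 2P$; hence $b-a = P+Q+R \le 3P+R \le 3(P+R)$, and therefore $m(\{\abs{1-f}>1/2\})=P+R \ge (b-a)/3$, as required. The degenerate configurations only help: if $f$ never reaches $1/2$ the target set is all of $[a,b]$ up to a null set, and if $f$ never exceeds $3/2$ one takes $v=b$ and the same computation applies. I expect the main obstacle to be precisely this second part, and within it the sharp use of convexity: the factor $3$ is exactly what the slope comparison between the levels $1/2$ and $3/2$ produces, and the estimate collapses under any cruder, convexity-free bound on the width of the band.
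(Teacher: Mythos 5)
Your proof is correct. Note that the paper does not prove this statement at all: it is quoted directly from \cite[Lemma 2.2]{fla08}, so there is no in-paper argument to compare against. Your proof is, in any case, the natural (and essentially the standard) one: the product's convexity follows from the expansion identity whose error term is $t(1-t)\bigl(f(x)-f(y)\bigr)\bigl(g(x)-g(y)\bigr)\ge 0$, and the measure bound follows from the monotonicity of the secant slope $x\mapsto f(x)/(x-a)$ (valid since $f(a)=0$) compared at the levels $1/2$ and $3/2$, which is precisely the source of the factor $3$; your handling of the degenerate cases where $f$ never reaches $1/2$ or never exceeds $3/2$, and of the right-continuity of $f$ at $a$, closes all the gaps.
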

	
		Let us now prove that the adjoint of the Bishop operator $T _ \alpha ^ *$ has no eigenvalue.
		
		\begin{Thm}\label{noeigenvalue}
			For every $\alpha \in [0, 1]$, the adjoint operator 
			$
				T _ \alpha ^ * \in 
				\mathcal B(L ^ {p'}([0, 1]))
			$ 
			has no eigenvalue.
		\end{Thm}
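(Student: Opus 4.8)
The plan is to argue by contradiction: assume $T_\alpha^* f = \lambda f$ for some $f \in L^{p'}([0,1]) \backslash \{0\}$ and some $\lambda \in \mathbf{C}$, and iterate the eigenvalue equation. Since $T_\alpha^{*n} f(x) = \{x-\alpha\}\{x-2\alpha\}\cdots\{x-n\alpha\}\, f(\{x-n\alpha\})$ a.e., writing $P_n(x) = \prod_{k=1}^n \{x-k\alpha\}$ we get $\lambda^n f(x) = P_n(x) f(\{x-n\alpha\})$ a.e. for every $n$. First I would dispose of the easy cases. If $\lambda = 0$ then $\{x-\alpha\} f(\{x-\alpha\}) = 0$ a.e., and since $\{x-\alpha\} > 0$ a.e. and $R_\alpha$ is a measure-preserving bijection, this forces $f = 0$, a contradiction. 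If $\alpha = p/q \in \mathbf{Q}$ in lowest terms, taking $n = q$ gives $\{x - q\alpha\} = x$, so $(P_q(x) - \lambda^q) f(x) = 0$ a.e.; as $P_q$ is piecewise a non-constant polynomial, the set $\{P_q = \lambda^q\}$ is Lebesgue-null and again $f = 0$.

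So from now on $\alpha$ is irrational and $\lambda \neq 0$. The next step is to show $f \neq 0$ a.e.: since $\lambda \neq 0$ and $\{x-\alpha\} > 0$ a.e., the eigenvalue equation shows the support $\{f \neq 0\}$ is $R_\alpha$-invariant up to a null set, and the ergodicity of the irrational rotation $R_\alpha$ forces it to have measure $0$ or $1$; as $f \neq 0$ it has full measure. Writing $g = |f| > 0$ a.e. and $u_n = |\lambda|^{-n} P_n$, the relation becomes $u_n = g/(g \circ R_\alpha^{-n})$ a.e. Now I would invoke the convexity Lemma \ref{convex}: the $n$ points $\{k\alpha\}$, $1 \leq k \leq n$, cut the circle $[0,1]$ into arcs, and on each arc every factor $\{x-k\alpha\}$ is affine, increasing and non-negative, while the factor whose index produces the left endpoint vanishes there (since $\{k_0\alpha\} - k_0\alpha \in \mathbf{Z}$). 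By the first part of Lemma \ref{convex} the restriction of $u_n$ to each arc is increasing, convex and non-negative and vanishes at the left endpoint, so the second part gives that $|1 - u_n| > 1/2$ on at least a third of each arc. Summing over all arcs yields the uniform lower bound
\[
	m(\{x \in [0,1] : |1 - u_n(x)| > 1/2\}) \geq \frac 1 3 \quad \text{for every } n \geq 1.
\]

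Finally I would contradict this bound. Because $\alpha$ is irrational, the sequence $(\{n\alpha\})_{n \geq 1}$ is dense in $[0,1]$, so I can choose $n_j \to \infty$ with $\{n_j\alpha\} \to 0$; then $R_\alpha^{-n_j}$ is translation by an amount tending to $0$. Since translation is continuous in measure, $g \circ R_\alpha^{-n_j} \to g$ in measure, and as $g > 0$ a.e. this gives $u_{n_j} = g/(g \circ R_\alpha^{-n_j}) \to 1$ in measure, so that $m(\{|1 - u_{n_j}| > 1/2\}) \to 0$, contradicting the lower bound above. The main obstacle is precisely this last tension: a naive attempt to take logarithms and apply Birkhoff's ergodic theorem to $\log|f|$ fails, because $\log|f|$ need not be integrable and the averaged argument degenerates exactly at the critical modulus $|\lambda| = e^{-1}$ dictated by $\int_0^1 \log t\, dt = -1$. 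The point of the convexity Lemma \ref{convex} is to replace this by an integrability-free, $n$-uniform fluctuation estimate, which the return-time subsequence $\{n_j\alpha\} \to 0$ then contradicts for every value of $\lambda$ at once.
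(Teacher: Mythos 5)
Your proof is correct, and it rests on the same two pillars as the paper's: the rational case via the $1/q$-periodic weight which is strictly monotone on each period, and the irrational case via the fluctuation bound of Lemma \ref{convex} together with return times of the rotation and ergodicity. But the organization of the irrational case is genuinely different. The paper never assumes full support of $f$: it takes Dirichlet convergents $q_n$, extracts a subsequence so that $\abs{f - f(\{\cdot + q_{n_k}\alpha\})} < 1/(2k)$ holds on a fixed set $A$ with $m(A) \geq 5/6$, intersects $A$ with the $\limsup$ of the sets $B_k = \{\abs{1 - F_{n_k}} > 1/2\}$ (each of measure $\geq 1/3$), concludes that $f$ vanishes on a set of measure $\geq 1/6$, and only \emph{then} uses ergodicity of $R_\alpha$ to propagate the zero set. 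You instead invoke ergodicity at the outset to get $\abs f > 0$ a.e., which legitimizes the quotient $u_n = g/(g \circ R_\alpha^{-n})$ and turns the endgame into a soft statement: $u_{n_j} \to 1$ in measure along any sequence with $\{n_j\alpha\} \to 0$, where mere density of the orbit suffices and the Dirichlet rate $1/q_n^2$ is never needed. This trades the paper's measure bookkeeping and a.e. subsequence extraction for two routine facts you should still record explicitly: translation on the circle is continuous in measure (via Lusin's theorem, or via $L^{p'}$-continuity of translations plus the fact that $g \in L^{p'}$), and if $h_j \to g$ in measure with $g > 0$ a.e. then $g/h_j \to 1$ in measure (truncate where $g$ is small or large). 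Two small points of care in your fluctuation step: because you iterate $T_\alpha^*$ directly rather than conjugating forward as the paper does, no factor of $P_n$ vanishes at $x = 0$, so the bound $m(\{\abs{1 - u_n} > 1/2\}) \geq 1/3$ genuinely requires the circle viewpoint with the wrap-around arc whose left endpoint is the largest $\{k\alpha\}$ --- which you do adopt; and, as in the paper, Lemma \ref{convex} should be applied to the continuous extension of $u_n$ on each closed arc, the value at the right endpoint being a one-point discrepancy that does not affect the measure estimate.
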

		\begin{proof}
			First, let us suppose that $\alpha = r / q$ is a rational number where $r$ and $q$ are coprime. 
			Let $\lambda \in \mathbf C$ and let $f \in L ^ {p'}([0, 1])$ satisfying $T _ \alpha ^ * f = \lambda f.$
			One can compute that a.e. on $[0, 1]$
			\begin{align*}
				T _ \alpha ^ {q*} f(x) & = 
				\{x - r / q\} \{x - 2 r / q\} \ldots \{x - q r / q\} f(\{x - q r / q\}) \\
				& = 
				\{x + (q - 1) r / q\} \{x + (q - 2) r / q\} \ldots x f(x).
			\end{align*}
			Since the map $k \in \{0, \ldots, q - 1\} \mapsto \{k r / q\}$ is one-to-one from the set $\{0, \ldots, q - 1\}$ onto the set $\{0, 1 / q, \ldots, (q - 1) / q\}$, then 
			$
			T _ \alpha ^ {q *} f(x) = x \{x + 1 / q\} \ldots \{x + (q - 1) / q\} f(x).
			$
			If $w$ is the function defined by $w(x) = x \{x + 1 / q\} \ldots \{x + (q - 1) / q\}$ for every $x \in [0, 1]$, the function $f$ satisfies $T _ \alpha ^ {q*} f = w f = \lambda ^ q f.$
			One can remark that $\{f \ne 0\} \subset \{w = \lambda ^ q\}$, implying that $m(\{f \ne 0\}) \leq m(\{w = \lambda ^ q\}).$
			However $w$ is $1/q$-periodic and since
			$
				w (x) = x(x + 1 / q) \ldots (x + (q - 1) / q)
			$ 
			for every $x \in [0, 1/q)$, it is strictly increasing on $[0, 1/q).$
			So the set $\{w = \lambda ^ q\}$ is finite and its Lebesgue measure satisfies $m(\{w = \lambda ^q\}) = 0.$
			Then $f = 0$, that is to say that the point spectrum of the adjoint operator is empty.
			
			Now suppose that $\alpha \in [0, 1]$ is irrational.
			The proof follows the lines of the proof of Theorem 2.1 in \cite{fla08}.
			By the Dirichlet's approximation Theorem, there exists a sequence $(p _ n / q _ n) _ {n \geq 1}$ of rational numbers satisfying $q _ n \to + \infty$ as $n \to + \infty$ such that we have
			$
			\abs{
				\alpha - p _ n / q _ n
			} \leq 1 / q _ n ^ 2
			$ 
			for every $n \geq 1.$
			
			Since $T _ \alpha ^ *$ is injective, let $\lambda \in \mathbf C \backslash \{0\}$ and let $f \in L ^ {p'} ([0, 1])$ be a function satisfying $T _ \alpha ^ * f = \lambda f.$
			Then for $n \geq 1$, we have
			$
				T _ \alpha ^ {q _ n *} f
				=
				\lambda ^ {q _ n} f,
			$ 
			so 
			$
				T _ \alpha ^ {q _ n *} f(\{x + q _ n \alpha\}) / {\lambda ^ {n _ k}}
				=
				f(\{x + q _ n \alpha\})
			$ 
			a.e. on $[0, 1]$, that is to say we have
			$
				\{x + (q _ n - 1) \alpha\}\ldots \{x + \alpha\} x f(x) / {\lambda ^ {n _ k}}
				=
				f(\{x + q _ n \alpha\}).
			$
			Hence $f(\{x + q _ n \alpha\}) = F _ n(x) f(x)$ a.e. on $[0, 1]$, where $F _ n$ is the function defined by 
			$
				F _ n(x) = 
				\{x + (q _ n - 1) \alpha\} \ldots \{x + \alpha\} x / \lambda ^ {q _ n}
			$ 
			for every $x \in [0, 1].$
			Then we have
			$
				f(x) - f(\{x + q _ n \alpha\})
				=
				(1 - F _ n (x)) f(x)
			$ 
			a.e. on $[0, 1]$, 
			and thus 
			$
				\abs{f(x)} =
				\abs{
						f(x) - f(\{x + q _ n \alpha\})
				}
				/ \abs{1 - F _ n (x)}.
			$
			
			As in the proof of Theorem 2.1 in \cite{fla08}, $(f(\{\cdot + q _ n \alpha\})) _ {n \geq 1}$ tends to $f$ in measure and one can construct a sequence $(n _ k) _ {k \geq 1}$ of positive integers such that 
			\[
				m\left(
					\bigcap _ {k = 1} ^ \infty 
						\left\{
							x \in [0, 1] ;
							\abs{
								f(x) - f(\{x + q _ {n _ k} \alpha\})
							}
							< 
							\frac 1 {2k}
						\right\}
				\right)
				\geq
				\frac 5 6.
			\]
			We will note 
			\[
			A = 
			\bigcap _ {k = 1} ^ \infty 
				\left\{
					x \in [0, 1] ; 
					\abs{
						f(x) - f(\{x + q _ {n _ k}\alpha\})
					} < \frac 1 {2k}
				\right\}
			\]
			and fix $k \geq 1.$ 
			Let $\sigma \colon \{1, \ldots, q _ {n _ k} - 1\} \to \{1, \ldots, q _ {n _ k} - 1\}$ be the unique permutation of the set $\{1, \ldots, q _ {n _ k} - 1\}$ which satisfies 
			$
				0 < 1 - \{\sigma(1) \alpha\} < \ldots < 1 - \{\sigma(q _ {n _ k} - 1) \alpha\}.
			$ 
			Then one can remark that 
			$
			\abs{F _ {n _k}} \colon x \mapsto x \{x + \alpha\} \ldots \{x + (q _ {n _ k} - 1) \alpha\} / \abs{\lambda} ^ {n _ k}
			$ 
			has a continuous extension which satisfies the conditions of Lemma \ref{convex} on each interval 
			$
				[1 - \{\sigma(j)\alpha\}, 1 - \{\sigma(j + 1) \alpha\}]
			$ 
			if $j \in \{1, \ldots, q _ {n _ k} - 2\}$ and on the intervals $[0, 1 - \{\sigma(1) \alpha\}]$ and $[1 - \{\sigma(q _ {n _ k} - 1)\alpha\}, 1]$ denoted by $I _ 0, \ldots, I _ {q _ {n _ k} - 1}.$
		
			Indeed $F _ {n _ k}(0) = F _ {n _ k} (1 - \{\sigma(j) \alpha\}) = 0$ and the map $x \mapsto  \{x + \sigma(j) \alpha\} = \{x + \{\sigma(j) \alpha\}\}$ is an increasing, non-negative, affine function on both intervals $[0, 1 - \{j \alpha\})$ and $[1 - \{j \alpha\}, 1)$ for every $j \in \{1, \ldots, q _ {n _ k} - 1\}.$
			Thus by Lemma \ref{convex}, for every $i \in \{0, \ldots, q _ {n _ k} - 1\}$
			$
				m\left(
					\left\{
						x \in I _ i ;
						\abs{
							1 - \abs{F _ {n _ k}(x)}
						}
						> 1 / 2
					\right\}
				\right)
				\geq 
				{
					\text{length}(I _ i)
				} / 3
			$
			and then we have the inequality
			$
				m\left(
					\left\{
						x \in [0, 1] ;
						\abs{1 - \abs{F _ {n _ k}(x)}} > 1 / 2
					\right\}
				\right)
				\geq 1 / 3
			$			
			by summing.
			Since $\abs{1 - F _ {n _ k}} \geq \abs{1 - \abs{F _ {n _ k}}}$, we have 
			$
				m(\{
					x \in [0, 1] ;
					\abs{1 - F _ {n _ k}(x)} > 1 / 2
				\})
				\geq 1 / 3.
			$
			
			If we set $B _ {k} = \{x \in [0, 1] ; \abs{1 - F _ {n _ k}(x)} > 1 / 2\}$ and $B = \cap _ {n = 1} ^ \infty \cup _ {k \geq n} B _ k$ then
			$
				m(B) = 
				\lim _ {n \to + \infty}
				m (
				\cup _ {k \geq n} B _ k
				)
				\geq 1 / 3
			$
			and we obtain the inequality 
			\[
				m(A \cap B) 
				= 
				1 - m([0, 1] \backslash (A \cup B)) 
				\geq
				1 - \frac 1 6 - \frac 2 3 
				\geq
				\frac 1 6.
			\]
			Moreover for every $x\in A \cap B$ and 
			every $n \geq 1$, there exists $k \geq n$ such that $x \in B _ k$ and 
			$
				\abs{f(x) - f(\{x + q _ {n _ k} \alpha\})}
				< 1 / 2k,
			$ 
			thus 
			$
				\abs{f(x)} / 2 
				\leq 
				 \abs{f(x)}\abs{1 - F _ {n _ k}(x)} < 1 / 2k,
			$ 
			which implies that 
			$
				\abs{f(x)} \leq 1 / k.
			$
			So $f(x) = 0$, and thus we have 
			$
				m(\{
					x \in [0, 1] ;
					f(x) = 0
				\}) 
				\geq 
				m(A \cap B) 
				\geq
				1 / 6.
			$
			Since $T _ \alpha ^ * f = \lambda f$, then for almost every $x$ in $[0, 1]$, 
			$f(x) = 0$ implies that 
			$
				f(\{x + \alpha\}) = 0.
			$
			Then the set $\{x \in [0, 1] ; f(x) = 0\}$ is $\alpha$-invariant, implying that 
			\[
			1 =
			m\left(
			\bigcup _ {k = 1} ^ \infty R _ \alpha ^ k (\{x \in [0, 1] ; f(x) = 0\})
			\right) \leq
			m(\{x \in [0, 1] ; f(x) = 0\})
			\]
			because $m(\{x \in [0, 1] ; f(x) = 0\}) \geq 1 / 6$ and $R _ \alpha$ is an ergodic transformation.
			So $f = 0$, that is to say, the point spectrum of the adjoint operator is empty.
		\end{proof}
	
		Knowing now that the adjoint of the operator $T _ \alpha$ has empty point spectrum, we can deduce from the Positive Supercyclicity Theorem that $T _ \alpha$ is not supercyclic.
		
		\begin{Thm}\label{notsupercyclic}
			For every $\alpha \in [0, 1]$, the Bishop operator $T _ \alpha$ is not supercyclic.
		\end{Thm}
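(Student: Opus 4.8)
The plan is to combine the Positive Supercyclicity Theorem (Theorem \ref{positivesupercyclicity}) with Lemma \ref{density}. Since Theorem \ref{noeigenvalue} guarantees that $\sigma _ p(T _ \alpha ^ *) = \emptyset$, the Positive Supercyclicity Theorem applies directly: if $T _ \alpha$ were supercyclic, there would exist $f \in L ^ p([0, 1])$ such that the positive orbit $A = \{a T _ \alpha ^ n f ; a \in (0, +\infty), n \geq 0\}$ is dense in $L ^ p([0, 1])$. I would then derive a contradiction by tracking the Lebesgue measure of the set on which the real part of the elements of $A$ is positive, and showing that this measure is in fact constant along the positive orbit.

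The key computation is the following. Recall that $T _ \alpha ^ n f(x) = x \{x + \alpha\} \ldots \{x + (n - 1) \alpha\} f(\{x + n \alpha\})$ a.e. on $[0, 1]$. The weight $w _ n(x) = x \{x + \alpha\} \ldots \{x + (n - 1) \alpha\}$ is a product of non-negative factors, each of which vanishes only on a set of Lebesgue measure zero, so $w _ n > 0$ a.e. on $[0, 1]$. Hence for every $a > 0$ and every $n \geq 0$ we have $\Re(a T _ \alpha ^ n f(x)) = a \, w _ n(x) \, \Re(f(\{x + n \alpha\}))$, which has the same sign a.e. as $\Re(f(\{x + n \alpha\}))$. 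Therefore, up to a null set, $\{\Re(a T _ \alpha ^ n f) > 0\} = R _ \alpha ^ {-n}(\{\Re f > 0\})$, where $R _ \alpha \colon x \mapsto \{x + \alpha\}$ preserves the Lebesgue measure $m$. It follows that $m(\{\Re(a T _ \alpha ^ n f) > 0\}) = m(\{\Re f > 0\})$ for every $a > 0$ and every $n \geq 0$.

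Finally I would invoke Lemma \ref{density}. Since $A$ is assumed dense in $L ^ p([0, 1])$, the set $\{m(\{\Re(g) > 0\}) ; g \in A\}$ must be dense in $[0, 1]$. But by the computation above this set reduces to the single value $m(\{\Re f > 0\})$, which cannot be dense in $[0, 1]$. This contradiction shows that $T _ \alpha$ is not supercyclic.

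The main obstacle is the sign computation in the second paragraph; everything else is a direct application of the two quoted results. The crucial point to verify carefully is that the weight $w _ n$ is strictly positive almost everywhere, as this is exactly what allows the positivity set of the real part to be transported by the measure-preserving rotation $R _ \alpha$ without any change in its measure, pinning the measures $m(\{\Re(g) > 0\})$ to a single value and thereby defeating the density furnished by Lemma \ref{density}.
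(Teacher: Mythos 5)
Your proposal is correct and takes essentially the same route as the paper's own proof: it combines Theorem \ref{noeigenvalue} (so that $\sigma _ p(T _ \alpha ^ *) = \emptyset$), the Positive Supercyclicity Theorem and Lemma \ref{density}, and then pins the measures $m(\{\Re(a T _ \alpha ^ n f) > 0\})$ to the single value $m(\{\Re f > 0\})$ via the measure-preserving rotation $R _ \alpha$, exactly as the paper does. Your explicit remark that the weight $w _ n$ is strictly positive almost everywhere is a small point the paper leaves implicit, but it is the same argument.
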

		\begin{proof}
			Let $f \in L ^ p([0, 1]).$ 
			By Theorem \ref{positivesupercyclicity}, $f$ is supercyclic for $T _ \alpha$ if and only if the set
			$
				\{
					a T _ \alpha ^ n f ;
					a > 0 , n \geq 0
				\}
			$
			is dense in $L ^ p ([0, 1]).$
			Then by Lemma \ref{density}, the set of Lebesgue measures
			$
				\{
					m(
						\{
							\Re(a T _ \alpha ^ n f) > 0
						\}
					) ;
					a > 0, n \geq 0
				\}
			$
			is dense in $[0, 1].$
			Since 
			$
				a T _ \alpha ^ n f(x) =
				a x \{x + \alpha\} \ldots \{x + (n - 1) \alpha\} f(\{x + n \alpha\})
			$
			a.e. on $[0, 1]$, for every $a > 0$ and every $n \geq 1$,
			we have
			$
				\Re(a T _ \alpha ^ n f)(x) > 0
			$
			if and only if 
			$
				a x \{x + \alpha\} \ldots \{x + (n - 1) \alpha\} \Re(f)(\{x + n \alpha\}) > 0.
			$
			So 
			\begin{align*}
				m(\{
					\Re(aT _ \alpha ^ n f) > 0
				\}) 
				& =
				m(\{
					\Re(f)(\{\cdot + n \alpha\}) > 0
				\}) \\
				& =
				m(
					R _ \alpha ^ {-n} (\{
						\Re(f) > 0
					\})
				) \\
				& =
				m(\{
					\Re(f) > 0
				\})
			\end{align*}
			since $R _ \alpha$ preserves the measure $m.$
			Then the set of the Lebesgue measures, which is equal to the singleton
			$
				\{
					m(\{
						\Re(f) > 0
					\})
				\},
			$
			is not dense in $[0, 1].$
			Hence $f$ is not supercyclic for $T _ \alpha.$
		\end{proof}
	
	\subsection{Supercyclicity of weighted translation operators}
	
		Let $\phi\in L ^ \infty ([0, 1]),$ one can remark already that $T _ {\phi, \alpha}$ cannot be supercyclic if the measure
		$
			m(\{
				\phi = 0
			\})
		$
		is positive.
		
		\begin{Prop}
			Let $\phi$ be a function in $L ^ \infty([0, 1])$ satisfying 
			$
				m(\{
					\phi = 0
				\}) > 0
			$ 
			and let $\alpha \in [0, 1].$
			The operator $T _ {\phi, \alpha}$ is not supercyclic.
		\end{Prop}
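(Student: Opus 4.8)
The plan is to mimic the hypercyclicity argument for the same class of weights, the point being that passing to the projective orbit does not enlarge the support of the iterates. First I would recall the formula for the iterates of $T _ {\phi, \alpha}$: for every $f \in L ^ p([0, 1])$ and every $n \geq 0$,
\[
	T _ {\phi, \alpha} ^ n f(x) =
	\phi(x) \phi(\{x + \alpha\}) \ldots \phi(\{x + (n - 1) \alpha\}) f(\{x + n \alpha\})
	\quad \text{a.e. on } [0, 1],
\]
so that $\{\phi = 0\} \subset \{T _ {\phi, \alpha} ^ n f = 0\}$ for every $n \geq 0$, exactly as in the hypercyclic case.

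The key observation is that multiplication by a scalar $\lambda \in \mathbf C$ does not affect this vanishing: every element $\lambda T _ {\phi, \alpha} ^ n f$ of the projective orbit still vanishes a.e. on $\{\phi = 0\}$. In other words, the whole set $\{\lambda T _ {\phi, \alpha} ^ n f ; \lambda \in \mathbf C, n \geq 0\}$ is contained in the closed subspace $\{g \in L ^ p([0, 1]) ; g = 0 \text{ a.e. on } \{\phi = 0\}\}$, which is a \emph{proper} subspace precisely because $m(\{\phi = 0\}) > 0.$ Thus the projective orbit of any $f$ lies inside a fixed proper closed subset of $L ^ p([0, 1])$ and cannot be dense.

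To make this quantitative I would conclude exactly as in the hypercyclic case, by exhibiting a function at positive distance from the entire projective orbit. For every $\lambda \in \mathbf C$ and every $n \geq 0$,
\[
	\| \lambda T _ {\phi, \alpha} ^ n f - \mathbf 1 _ {\{\phi = 0\}} \| _ p ^ p
	\geq
	\int _ {\{\phi = 0\}} 1 \, dx
	=
	m(\{\phi = 0\})
	> 0,
\]
so the projective orbit of $f$ stays at distance at least $m(\{\phi = 0\}) ^ {1 / p}$ from $\mathbf 1 _ {\{\phi = 0\}}$ and therefore fails to be dense in $L ^ p([0, 1]).$ Hence no $f \in L ^ p([0, 1])$ is supercyclic for $T _ {\phi, \alpha}.$ There is no genuine obstacle in this argument: it rests entirely on the common support constraint imposed by the zero set of $\phi$, and the only point worth stressing is that the supercyclic scaling by $\lambda$ is harmless, since it preserves a.e. vanishing on $\{\phi = 0\}.$
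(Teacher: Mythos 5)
Your proof is correct and takes essentially the same route as the paper's: every element of the projective orbit vanishes a.e.\ on $\{\phi = 0\}$, hence stays at distance at least $m(\{\phi = 0\}) ^ {1 / p}$ from $\mathbf 1 _ {\{\phi = 0\}}$ and cannot be dense. (Both you and the paper gloss over the $n = 0$ terms $\lambda f$, which need not vanish on $\{\phi = 0\}$; this is harmless, since a dense subset of $L ^ p([0, 1])$ would remain dense after removing the nowhere dense closed line $\mathbf C f$.)
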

		\begin{proof}
			For every $\lambda \in \mathbf C$, every $n \geq 0$ and every $f \in L ^ p([0, 1])$, we have the inequality
			$
				0 <
				m(\{
					\phi = 0
				\}) \leq
				m(\{
					\lambda T _ {\phi, \alpha} ^ n f = 0
				\}),
			$
			so the set 
			$
				\{
					\lambda T _ {\phi, \alpha} ^ n f ;
					\lambda \in \mathbf C,
					n \geq 0
				\}
			$ 
			cannot be dense in $L ^ p([0, 1]).$
		\end{proof}
	
		In order to generalize the result of Theorem \ref{notsupercyclic} about the non-supercyclicity to the weighted translation operators $T _ {\phi, \alpha}$, one would need the adjoint operator $T _ {\phi, \alpha} ^ * $, defined on $L ^ {p'}([0, 1])$ by
		$
			T _ {\phi, \alpha} ^ * f(x) 
			=
			\phi(\{x - \alpha\}) f(\{x - \alpha\})
		$ 
		for every $f \in L ^ {p'}([0, 1])$ and a.e. on $[0, 1]$,
		to have no eigenvalue whenever $\alpha \in [0, 1].$

		The argument of Flattot in \cite[Theorem 2.1]{fla08} proving that weighted translation operators $T _ {\phi, \alpha}$ have no eigenvalue can be adjusted to show the following proposition.
		
		\begin{Thm}\label{wtspectrum}
			Let $\phi \in L ^ \infty([0, 1])$ be an increasing convex function  such that $\phi(0) = 0.$
			For every $\alpha \in [0, 1]$, the operators $T _ {\phi, \alpha} \in \mathcal B(L ^ p([0, 1]))$ and $T _ {\phi, \alpha} ^ * \in \mathcal B(L ^ {p'}([0, 1]))$ have no eigenvalue.
		\end{Thm}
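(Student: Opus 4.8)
The plan is to adapt the proof of Theorem \ref{noeigenvalue} almost verbatim, the single new ingredient being that an increasing convex weight $\phi$ with $\phi(0)=0$ keeps Lemma \ref{convex} applicable. First I would observe that both operators can be handled in parallel through the same product: iterating the eigenvalue equation $T_{\phi,\alpha}^*f=\lambda f$ gives $f(\{x+q\alpha\})=P_q(x)f(x)$ a.e., while $T_{\phi,\alpha}f=\lambda f$ gives $f(x)=P_q(x)f(\{x+q\alpha\})$ a.e., where in both cases
\[
	P_q(x)=\frac{\phi(x)\phi(\{x+\alpha\})\cdots\phi(\{x+(q-1)\alpha\})}{\lambda^q}.
\]
The value $\lambda=0$ is disposed of first: since $\phi$ is increasing with $\phi(0)=0$ and is not identically zero, one has $\phi>0$ a.e., so $T_{\phi,\alpha}$ and $T_{\phi,\alpha}^*$ are injective and $0$ is not an eigenvalue. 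From now on $\lambda\neq 0$.

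For $\alpha=r/q$ rational with $r,q$ coprime, I would reproduce the rational part of Theorem \ref{noeigenvalue}: using that $k\mapsto\{kr/q\}$ is a bijection of $\{0,\dots,q-1\}$ onto $\{0,1/q,\dots,(q-1)/q\}$, the relation $T_{\phi,\alpha}^{q}f=\lambda^q f$ (and its adjoint analogue) forces $\{f\neq 0\}\subset\{W=\lambda^q\}$, where $W(x)=\phi(x)\phi(\{x+1/q\})\cdots\phi(\{x+(q-1)/q\})$ is $1/q$-periodic. On $[0,1/q)$ the function $W$ is a product of increasing convex functions, hence increasing, and being convex it is strictly increasing wherever it is nonzero; since $\lambda^q\neq 0$ the level set $\{W=\lambda^q\}$ is therefore finite, hence Lebesgue-null, and $f=0$.

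For irrational $\alpha$ I would invoke Dirichlet's approximation theorem to get $(p_n/q_n)$ with $q_n\to+\infty$ and $\abs{\alpha-p_n/q_n}\leq 1/q_n^2$, and note exactly as in Theorem \ref{noeigenvalue} that $f(\{\cdot+q_n\alpha\})\to f$ in measure, so that a subsequence $(n_k)$ and a set $A$ with $m(A)\geq 5/6$ can be extracted on which $\abs{f(x)-f(\{x+q_{n_k}\alpha\})}<1/(2k)$. The only step that genuinely uses the hypotheses on $\phi$ is the estimate $m(\{x:\abs{1-\abs{P_{n_k}(x)}}>1/2\})\geq 1/3$. To prove it I would partition $[0,1]$ at the points $1-\{j\alpha\}$, $j=1,\dots,q_{n_k}-1$ (together with $0$), into intervals $I_0,\dots,I_{q_{n_k}-1}$ on which no factor wraps around. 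On each $I_i$ the map $x\mapsto\{x+j\alpha\}$ is affine, increasing and non-negative, so $x\mapsto\phi(\{x+j\alpha\})$ is increasing, convex and non-negative (convexity is preserved by postcomposition with the convex increasing $\phi$); by the first assertion of Lemma \ref{convex} the product $\abs{P_{n_k}}$ inherits these properties on $I_i$, and because $\phi(0)=0$ exactly one factor vanishes at the left endpoint of $I_i$, so $\abs{P_{n_k}}$ vanishes there. The second assertion of Lemma \ref{convex} then yields $m(\{x\in I_i:\abs{1-\abs{P_{n_k}(x)}}>1/2\})\geq\mathrm{length}(I_i)/3$, and summing over $i$ gives the bound; finally $\abs{1-P_{n_k}}\geq\abs{1-\abs{P_{n_k}}}$ transfers it to $B_k=\{\abs{1-P_{n_k}}>1/2\}$.

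From here the argument is identical to that of Theorem \ref{noeigenvalue}: with $B=\bigcap_{n}\bigcup_{k\geq n}B_k$ one gets $m(A\cap B)\geq 1/6$, and for $x\in A\cap B$ the inequality $\abs{f(x)}/2\leq\abs{f(x)}\abs{1-P_{n_k}(x)}=\abs{f(x)-f(\{x+q_{n_k}\alpha\})}<1/(2k)$ in the adjoint case (respectively the symmetric computation bounding $\abs{f(\{x+q_{n_k}\alpha\})}$, hence $\abs{f(x)}$, in the case of $T_{\phi,\alpha}$) forces $f(x)=0$. Thus $m(\{f=0\})\geq 1/6$; since the eigenvalue relation makes $\{f=0\}$ invariant under the ergodic rotation $R_\alpha$ (using again $\phi>0$ a.e. for $T_{\phi,\alpha}$), ergodicity gives $m(\{f=0\})=1$, i.e. $f=0$. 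The main obstacle is precisely the verification in the previous paragraph that composing the affine rotation factors with $\phi$ and multiplying them preserves the increasing, convex, non-negative structure together with the vanishing at the left endpoint — this is where all three hypotheses on $\phi$ intervene, and it is what makes Lemma \ref{convex} applicable exactly as for the Bishop weight $\phi(x)=x$.
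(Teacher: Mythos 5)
Your proposal is correct and follows essentially the same route as the paper: reduce the rational case to the level sets of the $1/q$-periodic product $w$, and in the irrational case combine Dirichlet approximation, convergence in measure of $f(\{\cdot+q_n\alpha\})$ to $f$, the partition of $[0,1]$ at the points $1-\{j\alpha\}$ so that Lemma \ref{convex} applies to $\abs{P_{n_k}}$ on each subinterval, and ergodicity of $R_\alpha$ to conclude $f=0$. If anything, your write-up is slightly more careful than the paper's at a few points --- treating $T_{\phi,\alpha}$ and $T_{\phi,\alpha}^*$ explicitly in parallel, disposing of $\lambda=0$ separately, and justifying via convexity and $\phi(0)=0$ that the product is \emph{strictly} increasing where nonzero (which the finiteness of $\{w=\lambda^q\}$ genuinely requires) --- but these are refinements of the same argument, not a different one.
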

		\begin{proof}
			First let us suppose that $\alpha = r / p$ is a rational number where $r$ and $q$ are coprime.
			Let $\lambda \in \mathbf C$ and let $f \in L ^ {p'} ([0, 1])$ satisfying $T _ {\phi, \alpha} ^ * f = \lambda f.$
			Almost everywhere on $[0, 1]$
			\begin{align*}
				T _ {\phi, \alpha} ^ {q*} f(x) 
				& =
				\phi(\{x - r / q\}) \phi(\{x - 2r / q\}) \ldots \phi(\{x - q r / q\})
				f(\{x - q r / q\})\\
				& =
				\phi(\{x + (q - 1) r / q\}) \phi(\{x + (q - 2) r / q\}) \ldots \phi(x) f(x) \\
				& =
				\phi(x) \phi(\{x + 1 / q\}) \ldots \phi(\{x + (q - 1) / q\})
				f(x).
			\end{align*}
			If we define $w$ on $[0, 1]$ by
			$
				w(x) =
				\phi(x) \phi(\{x + 1 / q\}) \ldots \phi(\{x + (q - 1) / q\}),
			$ 
			the function $f$ satisfies $T _ {\phi, \alpha} ^ {q*}f = w f = \lambda ^ q f.$
			One can remark that $\{f \ne 0\} \subset \{w = \lambda ^ q\}$, implying that $m(\{f \ne 0\}) \leq m(\{w = \lambda ^ q\}).$
			However $w$ is $1 / q$-periodic and the function $w$
			is increasing on $[0, 1 / q).$
			So the set $\{w = \lambda ^ q\}$ is finite and $m(\{w = \lambda ^ q\}) = 0.$
			Then $f = 0$, that is to say, the point spectrum of $T _ {\phi, \alpha} ^ *$ is empty.
			
			Now suppose that $\alpha \in [0, 1]$ is irrational.
			By the Dirichlet's Theorem, there exists a sequence $(p _ n / q _ n) _ {n \geq 1}$ of rational numbers satisfying 
			$
					q _ n
				\to + \infty
			$
			as
			$
				n \to + \infty
			$
			such that 
			$
				\abs{
					\alpha - p _ n / q _ n
				} \leq 1 / q _ n ^ 2
			$ 
			for every $n \geq 1.$
			Since $\phi$ is increasing the weighted translation operator $T _ {\phi, \alpha} ^ *$ is injective.
			Let $\lambda \in \mathbf C \backslash \{0\}$ and let $f \in L ^ p ([0, 1])$ satisfying $T _ {\phi, \alpha} f = \lambda f.$
			Then for every $n \geq 1$,
			$
				T _ {\phi, \alpha} ^ {q _ n *} f 
				=
				\lambda ^ {q _ n} f
			$ implies that we have
			$
				f(\{x + q _ n \alpha\}) = F _ n(x) f(x)
			$ 
			for almost every $x$ in $[0, 1]$ 
			where $F _ n$ is defined on $[0, 1]$ by 
			$
				F _ n (x) 
				=
				\phi(x) \phi(\{x + \alpha\}) \ldots \phi(\{x + (q _ n - 1) \alpha\}) / \lambda ^ {q _ n}.
			$
			Then 
			$
				\abs{f(x)} =
					\abs{
						f(x) - f(\{x + q _ n \alpha\})
					}
				/
					\abs{
						1 - F _ n (x)
					}
			$
			a.e. on $[0, 1]$.
			As in the case of the Bishop operator, $(f(\{\cdot + q _ n \alpha\})) _ {n \geq 1}$ tends to $f$ in measure.
			One can also construct a sequence $(n _ k) _ {k \geq 1}$ of positive integers such that 
			\[
			 	m\left(
			 		\bigcap _ {k = 1} ^ \infty
			 			\left\{
			 				x \in [0, 1] ;
			 				\abs{
			 					f(x) - f(\{x + q _ {n _ k} \alpha\})
		 					}
	 						< \frac 1 {2k}
	 					\right\}
	 			\right)
	 			\geq \frac 5 6.
	 		\]
	 		For every $k \geq 1$ the function $\abs{F _ {n _ k}}$ admits a continuous extension that is an increasing, convex, non-negative function on each interval $[1 - \{\sigma(j) \alpha\}, 1 - \{\sigma(j + 1) \alpha\}]$ whenever $j \in \{1 , \ldots, q _ {n _ k} - 1\}$ and on the intervals $[0, 1 - \{\sigma(1)\alpha\}]$ and $[1 - \{\sigma(q _ {n _ k} - 1) \alpha\}, 1]$, where $\sigma$ is the unique permutation of the set $\{1, \ldots, q _ {n _ k} - 1\}$ which satisfies 
	 		$
	 			0 < 1 - \{\sigma(1)\alpha\} < \ldots < 1 - \{\sigma(q _ {n _ k} - 1) \alpha\} < 1,
	 		$
	 		since $\phi$ satisfies these properties.
	 		We have already observed in the proof of Theorem \ref{noeigenvalue} that these assumptions imply that
	 		$
	 			m(\{
	 				x \in [0, 1] ;
	 				f(x) = 0
	 			\}) \geq 1 / 6,
	 		$
	 		and so $f = 0$ since $\{x \in [0, 1]; f(x) = 0\}$ is invariant under the ergodic transformation $R _ \alpha.$
		\end{proof}
	
		As in the case of Bishop operators (Theorem \ref{notsupercyclic}), the non-negativity of the weight gives the following result.
		
		\begin{Prop}\label{supercyclic}
			Let $\phi \in L ^ \infty([0, 1])$ be an increasing convex function such that $\phi(0) = 0.$
			For every $\alpha \in [0, 1]$, the weighted translation operator $T _ {\phi, \alpha}$ cannot be supercyclic.
		\end{Prop}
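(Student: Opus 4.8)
The plan is to mimic closely the proof of Theorem \ref{notsupercyclic}, replacing the Bishop weight $x$ by the general weight $\phi$, with one preliminary reduction to ensure the weight does not vanish on a set of positive measure. First I would dispose of the degenerate case: if $m(\{\phi = 0\}) > 0$, then $T_{\phi,\alpha}$ is not supercyclic by the earlier Proposition treating weights that vanish on a set of positive measure, so there is nothing to prove. Hence I may assume $m(\{\phi = 0\}) = 0$. Since $\phi$ is increasing with $\phi(0) = 0$, one has $\phi(x) \geq \phi(0) = 0$ for every $x \in [0,1]$, so that $\phi \geq 0$ everywhere and in fact $\phi > 0$ almost everywhere on $[0,1]$.

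The core of the argument then rests on the two results just established. By Theorem \ref{wtspectrum} the adjoint $T_{\phi,\alpha}^*$ has empty point spectrum, so the Positive Supercyclicity Theorem (Theorem \ref{positivesupercyclicity}) applies: a vector $f \in L^p([0,1])$ is supercyclic for $T_{\phi,\alpha}$ if and only if the positive-cone orbit $\{a T_{\phi,\alpha}^n f ; a > 0, n \geq 0\}$ is dense in $L^p([0,1])$. Suppose, for the sake of contradiction, that some $f$ is supercyclic. Then by Lemma \ref{density} the set of Lebesgue measures $\{m(\{\Re(a T_{\phi,\alpha}^n f) > 0\}) ; a > 0, n \geq 0\}$ is dense in $[0,1]$.

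Finally I would compute this set explicitly and reach a contradiction. Writing $a T_{\phi,\alpha}^n f(x) = a\, \phi(x)\phi(\{x+\alpha\}) \cdots \phi(\{x+(n-1)\alpha\})\, f(\{x+n\alpha\})$ a.e., the scalar factor $a\, \phi(x) \cdots \phi(\{x+(n-1)\alpha\})$ is strictly positive almost everywhere — this is exactly where $a > 0$ and $\phi > 0$ a.e. are used — so that $\Re(a T_{\phi,\alpha}^n f)(x) > 0$ if and only if $\Re(f)(\{x+n\alpha\}) > 0$ a.e. Since $R_\alpha$ preserves Lebesgue measure, this yields $m(\{\Re(a T_{\phi,\alpha}^n f) > 0\}) = m(R_\alpha^{-n}(\{\Re(f) > 0\})) = m(\{\Re(f) > 0\})$ for all $a > 0$ and $n \geq 0$, so the set of Lebesgue measures collapses to the single point $\{m(\{\Re(f) > 0\})\}$, which is not dense in $[0,1]$. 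This contradiction shows that no $f$ is supercyclic, hence $T_{\phi,\alpha}$ is not supercyclic.

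The only step requiring genuine care — and the main difference from the Bishop case — is the initial reduction guaranteeing $\phi > 0$ almost everywhere, since an increasing convex function vanishing at $0$ may well be flat at $0$ on a whole initial interval. The sign computation at the heart of the proof breaks down precisely on $\{\phi = 0\}$, which is why that set must first be shown to be Lebesgue-null (or else handled separately by the preceding Proposition). Once $\phi > 0$ a.e. is secured, everything else is a verbatim transcription of the Bishop argument.
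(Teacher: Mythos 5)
Your proof is correct and follows essentially the same route as the paper: empty point spectrum of the adjoint (Theorem \ref{wtspectrum}), the Positive Supercyclicity Theorem (Theorem \ref{positivesupercyclicity}), Lemma \ref{density}, and the observation that positivity of the weight together with the measure-preservation of $R_\alpha$ collapses the set of measures $m(\{\Re(aT_{\phi,\alpha}^n f) > 0\})$ to the singleton $\{m(\{\Re(f) > 0\})\}$. The only difference is your preliminary reduction to the case $m(\{\phi = 0\}) = 0$ via the earlier Proposition on vanishing weights; the paper instead asserts directly that $\phi$ is positive on $(0,1]$ (implicitly reading ``increasing'' as strict), so your extra step is a harmless, indeed slightly more careful, refinement.
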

		\begin{proof}
			Let $f \in L ^ p ([0, 1]).$
			Since $\sigma _ p (T _ {\phi, \alpha} ^ *) = \emptyset$, by Theorem \ref{positivesupercyclicity}, $f$ is supercyclic for $T _ {\phi, \alpha}$ if and only if the set 
			$
				\{
					a T _ {\phi, \alpha} ^ n f ;
					a > 0,
					n \geq 0
				\}
			$
			is dense in $L ^ p ([0, 1]).$
			Then by Lemma \ref{density}, the set 
			$
				\{
					m(\{
						\Re(aT _ {\phi, \alpha} ^ n f) > 0 
					\}) ;
					a > 0, n \geq 0
				\}
			$
			is dense in $[0, 1].$
			Since 
			$
				aT _ {\phi, \alpha} ^ n f(x) = 
				\phi(x) \phi(\{x + \alpha\}) \ldots \phi(\{x + (n - 1) \alpha\})
				f(\{x + n \alpha\})
			$
			a.e. on $[0, 1]$, for every $a > 0$ and every $n \geq 0$, then
			$
				\Re(aT _ {\phi, \alpha} ^ n f)(x) > 0 
			$ 
			if and only if 
			$
				\phi(x) \phi(\{x + \alpha\}) \ldots \phi(\{x + (n - 1) \alpha\})
				\Re(f)(\{x + n \alpha\}) > 0.
			$
			So
			\begin{align*}
				m(\{
					\Re(aT _ {\phi, \alpha} ^ n f) > 0
				\})
				=
				m(R _ \alpha ^ {-n}(\{
					\Re(f) > 0
				\})) 
				=
				m(\{
					\Re(f) > 0
				\})
			\end{align*}
			since $\phi$ is positive on $(0, 1]$ and $R _ \alpha$ preserves the measure $m.$
			Then the set of the Lebesgue measures, which is equal to the singleton
			$
				\{
					m(\{
					\Re(f)(x) > 0\})
				\},
			$
			is not dense in $[0, 1]$, and $f$ is not supercyclic for $T _ {\phi, \alpha}.$
		\end{proof}
	
		Using Theorem \ref{GStype}, we can prove that the Supercyclicity Criterion cannot be satisfied by any weighted translation operator.
		The proof proceeds in the same way as in the hypercyclic case.
		
		\begin{Thm}\label{SCriterion}
			For every $\phi \in L ^ \infty([0, 1])$ and every $\alpha \in [0, 1]$, the operator $T _ {\phi, \alpha}$ does not satisfy the Supercyclicity Criterion.
		\end{Thm}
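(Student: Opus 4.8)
The plan is to mirror the proof of Theorem \ref{HCriterion}, inserting the scalars $\lambda_{n_k}$ at exactly the right places so that the Cauchy--Schwarz estimate again forces an incompatible measure inequality. First I would dispose of the degenerate weight: if $m(\{\phi=0\})>0$ then, as already observed, $T_{\phi,\alpha}$ is not even supercyclic, hence cannot satisfy the Supercyclicity Criterion. So I may assume $m(\{\phi=0\})=0$ and argue by contradiction. Supposing $T_{\phi,\alpha}$ satisfies the Supercyclicity Criterion, Theorem \ref{GStype} provides dense sets $X_0,Y_0$, an increasing sequence $(n_k)$, non-zero scalars $(\lambda_{n_k})$ and a map $S\colon Y_0\to Y_0$ with $TSy=y$, $\lambda_{n_k}T^{n_k}x\to0$ for $x\in X_0$, and $\lambda_{n_k}^{-1}S^{n_k}y\to0$ for $y\in Y_0$. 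Since $m(\{\phi=0\})=0$, the identity $TSy=y$ forces $S$ to be the partial inverse $Sh(x)=h(\{x-\alpha\})/\phi(\{x-\alpha\})$; iterating gives $S^{n}h(x)=h(\{x-n\alpha\})/P_n(\{x-n\alpha\})$, where $P_n(x)=\phi(x)\phi(\{x+\alpha\})\cdots\phi(\{x+(n-1)\alpha\})$, so that $T^{n}f(x)=P_n(x)f(\{x+n\alpha\})$.

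The crux is the weighted Cauchy--Schwarz splitting. Using density I pick $f\in X_0$ and $g\in Y_0$ with $m(\{\abs f>1\})\geq 3/4$ and $m(\{\abs g>1\})\geq 3/4$, exactly as in Theorem \ref{HCriterion}. Writing the integrand $\abs{f(\{x+n_k\alpha\})g(x)}^{p/2}$ as the product of $\bigl(\abs{\lambda_{n_k}}^p\abs{P_{n_k}(x)}^p\abs{f(\{x+n_k\alpha\})}^p\bigr)^{1/2}$ and $\bigl(\abs{\lambda_{n_k}}^{-p}\abs{g(x)}^p/\abs{P_{n_k}(x)}^p\bigr)^{1/2}$, the Cauchy--Schwarz inequality gives
\[
	\left(
		\int_0^1 \abs{f(\{x+n_k\alpha\})g(x)}^{p/2}\,dx
	\right)^2
	\leq
	\|\lambda_{n_k}T^{n_k}f\|_p^p
	\int_0^1 \frac{\abs{\lambda_{n_k}^{-1}g(x)}^p}{\abs{P_{n_k}(x)}^p}\,dx.
\]
The scalars cancel inside the product but reappear as the two weighted norms: the first factor is $\|\lambda_{n_k}T^{n_k}f\|_p^p$, while the substitution $x\mapsto\{x+n_k\alpha\}$, which preserves $m$ since $R_\alpha$ does, shows that the second factor equals $\|\lambda_{n_k}^{-1}S^{n_k}g\|_p^p$. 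By conditions (i) and (ii) both factors tend to $0$, whence $\int_0^1\abs{f(\{x+n_k\alpha\})g(x)}^{p/2}\,dx\to 0$.

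From here the argument is verbatim that of Theorem \ref{HCriterion}. Passing to a subsequence $(m_k)$ along which $f(\{x+m_k\alpha\})g(x)\to0$ almost everywhere, and setting $\omega=\{\abs f>1\}$ and $\Omega=\{\abs g>1\}$, one finds a set of measure at least $3/4$ forced to lie in $\bigcup_{K}\bigcap_{k\geq K}\{x\in[0,1] ; \{x+m_k\alpha\}\notin\omega\}$, so that
\[
	\frac34
	\leq
	\liminf_{K\to+\infty} m\bigl(R_\alpha^{-m_K}([0,1]\backslash\omega)\bigr)
	=
	m([0,1]\backslash\omega)
	\leq
	\frac14,
\]
a contradiction. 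The only genuinely new point over the hypercyclic case is the correct placement of $\lambda_{n_k}$ in the Cauchy--Schwarz split, so that the scalars telescope away inside the integrand while recombining into precisely the two quantities controlled by hypotheses (i) and (ii); once this is arranged, the measure-theoretic contradiction is identical, and the proof concludes.
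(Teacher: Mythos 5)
Your proposal is correct and follows essentially the same route as the paper: reduction to the Gethner--Shapiro-type criterion via Theorem \ref{GStype}, the identification of $S$ as the partial inverse $h \mapsto h(\{\cdot-\alpha\})/\phi(\{\cdot-\alpha\})$, the Cauchy--Schwarz splitting with $\lambda_{n_k}$ inserted so the two factors become $\|\lambda_{n_k}T_{\phi,\alpha}^{n_k}f\|_p^p$ and $\|\lambda_{n_k}^{-1}S^{n_k}g\|_p^p$, and the concluding measure contradiction $3/4 \leq m([0,1]\backslash\omega) \leq 1/4$. The only (harmless) difference is that you explicitly dispose of the case $m(\{\phi=0\})>0$ via the earlier non-supercyclicity proposition, a step the paper leaves implicit.
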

		\begin{proof}
			Suppose that $T _ {\phi, \alpha}$ satisfies the Supercyclicity Criterion.
			Then by Theorem \ref{GStype}, it satisfies the Gethner-Shapiro-type Supercyclicity Criterion with data $X _ 0, Y _ 0$ and $S$.
			Suppose also that 
			$
				m(\{
					\phi = 0
				\})
				= 0.
			$
			Since $X _ 0$ and $Y _ 0$ are dense subsets of $L ^ p([0, 1])$, let $f \in X _ 0$ and $g \in Y _ 0$ be functions such that 
			$
				m(\{
					\abs{f} > 1
				\}) \geq 3 / 4
			$ 
			and
			$
				m(\{
					\abs{g} > 1
				\}) \geq 3 / 4.
			$ 
			Then by the Cauchy-Schwarz inequality, for every $k \geq 0$
			\begin{align*}
				\left(
					\int _ 0 ^ 1 
						\abs{
							f(\{x + n _ k \alpha\}) g(x)
						} ^ {p / 2}
					dx
				\right) ^ 2
				& \leq
				\int _ 0 ^ 1
					\abs{
						\lambda _ {n _ k}
						\phi(x) \ldots \phi(\{x + (n _ k - 1) \alpha\})
						f(\{x + n _ k \alpha\})
					} ^ p
				dx \\
				& \phantom{\leq \int _ 0 ^ 1}
				\times
				\int _ 0 ^ 1
					\frac{
						\abs{g(x)} ^ p
					}{
						\abs{
							\lambda _ {n _ k}
							\phi(x) \ldots \phi(\{x + (n _ k - 1) \alpha\})
						} ^ p
					}
				dx \\
				& \leq
				\|\lambda _ {n _ k} T _ {\phi, \alpha} ^ {n _ k} f \| _ p ^ p 
				\int _ 0 ^ 1 
					\frac{
						\abs{g(\{y - n _ k \alpha\})} ^ p
					}{
						\abs{
							\lambda _ {n _ k}
							\phi(\{y - n _ k \alpha\}) \ldots \phi(\{y - \alpha\})
						} ^ p
					}
					dy \\
				& \leq
				\|\lambda _ {n _ k}T _ {\phi, \alpha} ^ {n _ k} f \| _ p ^ p
				\left\|
					\lambda _ {n _ k} ^ {-1}
					S ^ {n _ k} g 
				\right\| _ p ^ p
				\xrightarrow[k \to + \infty]{} 0					
			\end{align*}
			with the map $S \colon Y _ 0 \to Y _ 0$ defined by 
			$
				Sg(x) = g(\{x - \alpha\}) / \phi(\{x - \alpha\})
			$ 
			a.e. on $[0, 1]$ for every $g \in Y _ 0.$
			As in the hypercyclic case if $(m _ k) _ {k \geq 0}$ is a subsequence of $(n _ k) _ {k \geq 0}$ such that 
			$
					f(\{x + m _ k \alpha\} g(x)
				\to 0
			$
			as 
			$
				k \to + \infty
			$ 
			for every $x$ in a full measure set $A$ in $[0, 1]$, 
			then the sets
			$
				\omega = 
				\{
					\abs{f} > 1
				\}
			$ 
			and 
			$
				\Omega' =
				\{
					\abs{g} > 1 
				\}
				\cap A
			$ 
			satisfy
			\[
				\frac 3 4 
				\leq
				m(\Omega')
				\leq
				m\left(
					\bigcup _ {K \geq 0}
						\bigcap _ {k \geq K}
							\{
								x \in [0, 1] ;
								\{x + m _ k \alpha\} \notin \omega
							\}
				\right)
				\leq 
				m([0, 1] \backslash \omega)
				\leq
				\frac 1 4,
			\]
			which is impossible.
		\end{proof}
	
	\section{Cyclicity}
	
		Our aim is now to investigate the cyclicity properties of the Bishop and weighted translation operators.
		Cyclicity is the less restrictive dynamical property of operators which we consider in this paper.
		We will denote by $\mathbf C[\xi]$ the set of complex polynomials.
		
		\begin{Def}[{\cite[Definition 2.16]{gro11}}]
			An operator $T \in \mathcal B(X)$ is said to be \textit{cyclic} if there exists $x \in X$ such that the linear subspace
			$
				\text{span}[T ^ n x ; n \geq 0] =
				\{
					P(T)x ; P \in \mathbf C[\xi]
				\}
			$
			is dense in $X.$
			In this case $x$ is called a cyclic vector for $T.$
		\end{Def}
	
		As in the case of hypercyclicity and supercyclicity, there exists a well known necessary and sufficient condition for the cyclicity of an operator in the case where $T ^ *$ has no eigenvalue, proved by Baire Category arguments.
		
		\begin{Prop}\label{cyclicbirkhoff}
			Let $T \in \mathcal B(X)$ such that 
			$
				\sigma _ p(T ^ *) = \emptyset
			.$ 
			The following assertions are equivalent:
			\begin{enumerate}[(i)]
				\item 
					$T$ is cyclic;
				\item \label{ii}
					For every pair $(U, V)$ of non-empty open sets in $X$, there exists $P \in \mathbf C[\xi]$ such that 
					$
						P(T)(U) \cap V \ne \emptyset.
					$
			\end{enumerate}
			In this case, the set of cyclic vectors for $T$ is a dense $G _ \delta$-set in $X.$
		\end{Prop}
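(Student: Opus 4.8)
The plan is to adapt the Baire category proof of Birkhoff's transitivity theorem, replacing the semigroup of iterates $(T^n)_{n\ge0}$ by the algebra $\{P(T) : P\in\mathbf C[\xi]\}$. First I would record that the set $C(T)$ of cyclic vectors is always a $G_\delta$-set, with no use of the spectral hypothesis. Fix a countable base $(V_j)_{j\ge1}$ of non-empty open sets of $X$, available by separability. By definition $x$ is cyclic if and only if $\{P(T)x : P\in\mathbf C[\xi]\}$ meets every $V_j$, so $C(T)=\bigcap_{j\ge1}G_j$ where $G_j=\bigcup_{P\in\mathbf C[\xi]}(P(T))^{-1}(V_j)$. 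Each $(P(T))^{-1}(V_j)$ is open since $P(T)$ is continuous, hence each $G_j$ is open and $C(T)$ is a $G_\delta$-set.

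Next I would treat the implication (ii)$\Rightarrow$(i) together with the final assertion. Assuming (ii), for every non-empty open set $U$ and every $j$ there is $P$ with $P(T)(U)\cap V_j\ne\emptyset$, which is exactly the statement $U\cap G_j\ne\emptyset$; thus each $G_j$ is a dense open set. By the Baire Category Theorem, $C(T)=\bigcap_j G_j$ is a dense $G_\delta$-set, in particular non-empty, so $T$ is cyclic and the concluding sentence of the statement holds.

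The crux is the reverse implication (i)$\Rightarrow$(ii), and this is the only place where $\sigma_p(T^*)=\emptyset$ is used. The key point is a lemma: \emph{for every non-zero $P\in\mathbf C[\xi]$, the operator $P(T)$ has dense range}. Indeed, $P(T)$ has dense range if and only if its adjoint $(P(T))^*$ is injective (via annihilators and Hahn--Banach); writing $(P(T))^*$ as a non-zero polynomial in $T^*$ and factoring it into linear factors $\prod_i(T^*-\mu_i)$, any non-zero vector in its kernel would, after peeling off factors until the first one that annihilates a non-zero vector, yield an eigenvector of $T^*$ --- contradicting $\sigma_p(T^*)=\emptyset$. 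The same factoring shows $P(T)\ne0$ for $P\ne0$, since otherwise $(P(T))^*=0$ would produce eigenvalues of $T^*$.

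Granting the lemma, let $x_0$ be a cyclic vector and let $P\ne0$. Since $\operatorname{span}\{T^nP(T)x_0 : n\ge0\}=P(T)\big(\operatorname{span}\{T^nx_0\}\big)$ and $\operatorname{span}\{T^nx_0\}$ is dense, continuity of $P(T)$ gives $\overline{\operatorname{span}\{T^nP(T)x_0\}}=\overline{\operatorname{Ran}(P(T))}=X$; hence $P(T)x_0$ is again cyclic, and non-zero since $P(T)x_0=0$ with $P\ne0$ would force $P(T)=0$. As $\{P(T)x_0 : P\ne0\}$ is then a non-trivial dense subspace with the origin removed, it remains dense. Now, given non-empty open sets $U,V$, I would choose $P\ne0$ with $y_0:=P(T)x_0\in U$; since $y_0$ is cyclic, choose $R\in\mathbf C[\xi]$ with $R(T)y_0\in V$. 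Then $R(T)y_0\in R(T)(U)\cap V$, proving (ii). The main obstacle is precisely the dense-range lemma, which is where the absence of eigenvalues of $T^*$ is indispensable; everything else is a routine transcription of the transitivity argument.
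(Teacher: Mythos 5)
Your proof is correct. The Baire--category half --- writing the set of cyclic vectors as $\bigcap_{j}\bigcup_{P\in\mathbf C[\xi]}(P(T))^{-1}(V_j)$, noting each union is open, and dense under hypothesis (ii) --- is exactly the paper's argument for (ii)$\Rightarrow$(i) and for the dense-$G_\delta$ conclusion, and indeed uses no spectral hypothesis. Where you genuinely diverge is in (i)$\Rightarrow$(ii). The paper uses a symmetric commutation trick: pick $P$ with $P(T)x\in U$, use the dense range of $P(T)$ to produce an open set $W$ with $P(T)(W)\subset V$, pick $Q$ with $Q(T)x\in W$, and observe that $Q(T)P(T)x=P(T)Q(T)x$ lies in $Q(T)(U)\cap V$. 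You instead isolate the lemma that non-zero polynomial images of a cyclic vector are again cyclic and form a dense set, and then reach $V$ by a second application of cyclicity at $y_0=P(T)x_0\in U$. The two routes consume the same ingredients (commutativity of polynomials in $T$ plus dense range of $P(T)$ for $P\ne 0$), so the difference is organizational rather than substantive; but your write-up buys two things the paper leaves implicit. First, the paper simply \emph{asserts} that $\sigma_p(T^*)=\emptyset$ forces $P(T)$ to have dense range, whereas you prove it: dense range is equivalent to injectivity of $(P(T))^*=P(T^*)$, and factoring $P(T^*)$ into linear factors and peeling them off until a non-zero vector is annihilated would produce an eigenvector of $T^*$. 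Second, the paper's chosen $P$ with $P(T)x\in U$ could in principle be the zero polynomial (when $0\in U$), for which the dense-range claim fails; your insistence on $P\ne 0$, legitimized by the density of $\{P(T)x_0 \,;\, P\ne 0\}$, quietly closes that small gap.
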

		\begin{proof}
			Suppose that the operator $T$ is cyclic.
			Let $x$ be a cyclic vector for $T$ and let $U$ and $V$ be two non-empty open sets in $X.$
			There exists $P \in \mathcal C[\xi]$ such that $P(T)x \in U.$
			Since $\sigma _ p(T ^ *) = \emptyset$, the operator $P(T)$ has dense range.
			Hence one can find an open set $W$ of $X$ such that $P(T)(W) \subset V.$
			Once again by cyclicity there exists $Q \in \mathcal C[\xi]$ such that $Q(T)x \in W$ and then 
			$
				P(T)Q(T)x = Q(T)P(T)x \in Q(T)(U) \cap V \ne \emptyset.
			$ 
			
			Conversely, let us suppose that the condition (\ref{ii}) holds and let $(U _ k) _ {k \geq 1}$ be a basis of open sets in $X.$
			A vector $x \in X$ is cyclic for $T$ if and only if for every $k \geq 1$, there exists $P \in \mathbf C[\xi]$ such that $P(T) x \in U _ k$, that is to say if and only if 
			$
				x \in 
				\bigcap _ {k \geq 1}
					\bigcup _ {P \in \mathbf C[\xi]}
						P(T) ^ {-1}(U _ k).
			$
			The union $\cup _ {P \in \mathbf C[\xi]} P(T) ^ {-1}(U _ k)$ is a dense open set in $X$ for every $k \geq 1.$
			Indeed for every non-empty open set $V$ in $X$ there exists  $P \in \mathbf C[\xi]$ such that $P(T)(V) \cap U _ k \ne \emptyset$, that is to say that $P(T) ^ {-1} (U _ k) \cap V \ne \emptyset.$
			So $\cup _ {P \in \mathbf C[\xi]} P(T) ^ {-1}(U _ k) \cap V \ne \emptyset.$
			By the Baire Category Theorem, the set of cyclic vectors is a dense $G _ \delta$-set in $X$ and thus a non-empty set.
		\end{proof}
	
	\subsection{Cyclicity of Bishop operators in the rational case}
	
		The cyclicity of the Bishop operator $T _ \alpha$ has already been studied in the case $\alpha \in \mathbf Q$ by Chalendar and Partington in \cite[Section 5.4]{cha11} and generalized to multivariable Bishop operators by Chalendar, Partington and Pozzi in \cite[Section 4]{cha10}.
		
		\begin{Def}[{\cite[Definition 5.4.1]{cha11}}]
			Let $f \in L ^ p ([0, 1])$ and let $\alpha = r / q$ be a rational number such that $0 < r < q$  and $r$ and $q$ are coprime.
			We define $\Delta(f, r / q)$ a.e. on $[0, 1]$ by
			\[
				\begin{vmatrix}
					f(t) & T _ {r / q} f(t) & \cdots & T _ {r / q} ^ {q - 1} f(t) \\
					f\left(\left\{t + r / q\right\}\right) & T _ {r / q} f\left(\left\{t + r / q\right\}\right) & \cdots & T _ {r / q} ^ {q - 1} f\left(\left\{t + r / q\right\}\right) \\
					\vdots & \vdots & & \vdots \\
					f\left(\left\{t + (q - 1) r / q\right\}\right) & T _ {r / q} f\left(\left\{t + (q - 1) r / q \right\}\right) & \cdots & T _ {r / q} ^ {q - 1} f\left(\left\{t + (q - 1) r / q\right\}\right)
				\end{vmatrix}.
			\]
		\end{Def}
		
		This function is used in \cite[Theorem 5.4.4]{cha11} to give a necessary and sufficient condition for a function $f \in L ^ p([0, 1])$ to be cyclic for $T _ \alpha$, where $\alpha \in (0, 1) \cap \mathbf Q$.
		
		\begin{Thm}[{\cite[Theorem 5.4.4]{cha11}}]\label{cyclicrat}
			Let $\alpha = r / q$ be a rational number such that $0 < r < q$ and $r$ and $q$ are coprime. 
			A function $f \in L ^ p([0, 1])$ is cyclic for $T _ {r / q}$ if and only if the function $\Delta(f, r / q)$ 
			satisfies 
			$
				m(\{
					t \in [0, 1] ;
					\Delta(f, r / q)(t) = 0
				\}) = 0.
			$
		\end{Thm}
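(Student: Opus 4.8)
The plan is to decompose $L^p([0,1])$ along the finite orbits of the rotation $R_{r/q}$ and to reduce cyclicity to a pointwise linear-algebra condition, the determinant $\Delta(f,r/q)$ being exactly the relevant Wronskian-type quantity. Since $q\cdot\tfrac r q=r$ is an integer, the iterate $T_{r/q}^{\,q}$ is the multiplication operator $M_w$, where $w(x)=x\{x+1/q\}\cdots\{x+(q-1)/q\}$ is $1/q$-periodic and, on the fundamental domain $[0,1/q)$, strictly increasing; in particular $w$ is constant on each $R_{r/q}$-orbit. Parametrising the orbits by $t\in[0,1/q)$ with points $x_i(t)=\{t+ir/q\}$, $0\le i\le q-1$, Euclidean division $n=qa+s$ (with $0\le s<q$) gives $T_{r/q}^{\,n}f=w^{a}\,T_{r/q}^{\,s}f$ pointwise. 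Hence on a fixed orbit the vector $\big(P(T_{r/q})f(x_i(t))\big)_{i}$ lies in the column span of the matrix $M(t)=\big(T_{r/q}^{\,s}f(x_i(t))\big)_{0\le i,s\le q-1}$, whose determinant is precisely $\Delta(f,r/q)(t)$.

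Next I would phrase cyclicity by duality: as $1<p<\infty$, the cyclic subspace $\operatorname{span}[T_{r/q}^{\,n}f;\,n\ge0]$ is dense if and only if no nonzero $u\in L^{p'}([0,1])$ annihilates it. Writing the bilinear pairing orbit by orbit, set $U(t)=\big(u(x_i(t))\big)_i\in\mathbf C^q$ and $v_s(t)=\big(M(t)^{\top}U(t)\big)_s=\sum_{i}T_{r/q}^{\,s}f(x_i(t))\,u(x_i(t))$, which lies in $L^1([0,1/q))$ by H\"older. Using $T_{r/q}^{\,n}f=w^{a}T_{r/q}^{\,s}f$ together with $\int_0^1 F\,dx=\int_0^{1/q}\sum_i F(x_i(t))\,dt$, one computes, for $n=qa+s$,
\[
	\big\langle T_{r/q}^{\,n}f,\,u\big\rangle=\int_0^{1/q}w(t)^{a}\,v_s(t)\,dt .
\]
Thus $u$ annihilates the cyclic subspace if and only if $\int_0^{1/q}w^{a}v_s\,dt=0$ for every $a\ge0$ and every $s\in\{0,\dots,q-1\}$.

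The heart of the argument, and the step I expect to be the main obstacle, is to turn this vanishing-of-moments condition into the pointwise identity $v_s\equiv0$. Because $w$ restricts to a strictly increasing homeomorphism of $[0,1/q]$, pushing the finite measure $v_s\,dt$ forward by $w$ produces a measure on a bounded interval all of whose moments vanish; by the Weierstrass approximation theorem this measure is zero, so $v_s=0$ a.e.\ for each $s$. Consequently $u$ annihilates the cyclic subspace if and only if $M(t)^{\top}U(t)=0$ for a.e.\ $t\in[0,1/q)$.

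It then remains to read off both implications. If $m(\{\Delta(f,r/q)=0\})=0$, then $M(t)$ is invertible for a.e.\ $t$, so $M(t)^{\top}U(t)=0$ forces $U(t)=0$, i.e.\ $u=0$; hence the cyclic subspace has trivial annihilator and $f$ is cyclic. Conversely, the set $\{\Delta(f,r/q)=0\}$ is $R_{r/q}$-invariant (a cyclic permutation of the rows of $M$ only multiplies $\Delta$ by a sign), so if it has positive measure I may work on $[0,1/q)$ and choose, by a measurable selection, a unit vector $U(t)\in\ker M(t)^{\top}\setminus\{0\}$ there and $U(t)=0$ elsewhere. The resulting $u\in L^\infty([0,1])\subset L^{p'}([0,1])$ is nonzero and, by the criterion above, annihilates the cyclic subspace, so $f$ is not cyclic. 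This proves the equivalence.
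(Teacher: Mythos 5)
Your proof is correct, but it follows a genuinely different route from the paper's. (The paper cites Chalendar--Partington for this statement; its own version of the argument is the proof of the weighted generalization, Theorem \ref{cyclicitywt}, which follows the same scheme.) The shared core is the identity $T_{r/q}^{\,q}=M_w$ with $w$ strictly increasing on $[0,1/q)$, the matrix $M(t)$ whose determinant is $\Delta(f,r/q)(t)$, and a Weierstrass-type density fact for polynomials in $w$. Where you diverge is in how cyclicity is attacked. The paper's forward direction is constructive: it introduces the dense family of bounded functions vanishing on the bad sets $\Omega_{n,f,\phi}$ where $\abs{\Delta}$ is small or the matrix entries are large, solves the linear system to write such an $h$ as $\sum_j h_j T^j f$ with $1/q$-periodic bounded coefficients (Lemma \ref{decompositionwt}), and then approximates each $h_j$ by $Q_j(w)$ via Lemma \ref{density1/q} together with explicit $L^p$ estimates; its converse exhibits kernel coefficients $a_j(t)$ and shows every element of the closed cyclic subspace satisfies the induced linear relation. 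You instead argue by duality throughout: you compute the annihilator of the cyclic subspace orbit by orbit, identify annihilation of the orbit with the vanishing of all $w$-moments of the functions $v_s=(M^\top U)_s$, kill these moments by pushing forward and invoking Weierstrass (this step is equivalent to the paper's Stone--Weierstrass argument inside Lemma \ref{density1/q}, since $w$ is strictly monotone there), and conclude that the annihilator is exactly $\{u\,;\,M(t)^\top U(t)=0 \text{ a.e.}\}$, after which both implications are linear algebra: a.e.\ invertibility of $M(t)$ in one direction, a measurable selection of a unit kernel vector in the other. Your version is shorter and more unified, avoiding the truncation bookkeeping and the $L^p$ estimates; the paper's constructive route has the advantage of producing explicit approximating polynomials, which is what the quantitative considerations later in the paper (the function $\psi$ of Theorem \ref{psi} and the related open questions) would need. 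The one step you only gesture at, the measurable selection in $\ker M(t)^\top$, is standard (e.g.\ cofactor vectors on the set where the rank is $q-1$, or Kuratowski--Ryll-Nardzewski), and the paper's own converse is no more detailed on that point.
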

	
		One can deduce from the previous theorem a set of common cyclic vectors for the family of operators $T _ \alpha$, $\alpha \in (0,1) \cap \mathbf Q.$
		
		\begin{Thm}\label{cyclic}
			Any holomorphic function $f$ on a open neighborhood of $[0, 1]$ such that $f(0) \ne 0$ is cyclic for $T _ \alpha$ for every $\alpha \in (0,1) \cap \mathbf Q.$
		\end{Thm}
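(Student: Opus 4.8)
The plan is to apply the characterization of cyclic functions in Theorem \ref{cyclicrat}: fixing $\alpha = r/q$ with $0 < r < q$ coprime, it suffices to prove that $\Delta(f, r/q)$ vanishes only on a Lebesgue-null subset of $[0,1]$. First I would record the explicit shape of the entries. Writing $a_s = \{t + s r/q\}$, which is $q$-periodic in the integer $s$, the iterate formula for $T_{r/q}^j$ gives $T_{r/q}^j f(\{t + i r/q\}) = a_i a_{i+1}\cdots a_{i+j-1}\, f(a_{i+j})$, so the matrix defining $\Delta(f,r/q)(t)$ has $(i,j)$-entry $\bigl(\prod_{l=0}^{j-1} a_{i+l}\bigr) f(a_{i+j})$, for $i,j \in \{0,\dots,q-1\}$. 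On each interval $(k/q,(k+1)/q)$ one has $\{t+sr/q\} = \{sr/q\}+t$, an affine function of $t$; since $f$ is holomorphic on a neighborhood of $[0,1]$, every entry, and hence $\Delta(f,r/q)$, is real-analytic in $t$ on that interval. As a real-analytic function on an interval is either identically zero or has a discrete (null) zero set, the whole problem reduces to showing that $\Delta(f,r/q)$ is not identically zero on one such interval.

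Next I would exploit a symmetry to reduce to the interval $(0,1/q)$. Replacing $t$ by $\{t+r/q\}$ shifts every row index by one modulo $q$, so the matrix at $\{t+r/q\}$ is obtained from the one at $t$ by a cyclic permutation of its $q$ rows; hence $\Delta(f,r/q)(\{t+r/q\}) = (-1)^{q-1}\Delta(f,r/q)(t)$, and the zero set of $\Delta(f,r/q)$ is invariant under the measure-preserving map $t\mapsto\{t+r/q\}$. Since $\gcd(r,q)=1$, the iterates of this map carry $(0,1/q)$ bijectively onto each subinterval $(k/q,(k+1)/q)$ by a translation, so a null zero set on $(0,1/q)$ forces a null zero set on all of $[0,1]$.

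The heart of the argument, and the main obstacle, is to show $\Delta(f,r/q)\not\equiv 0$ on $(0,1/q)$; this is where the hypothesis $f(0)\neq 0$ enters decisively, via an asymptotic analysis as $t\to 0^+$. Set $c_k = a_0\cdots a_{k-1}$ with $c_0 = 1$; on $(0,1/q)$ all $a_s>0$, so factoring $c_i^{-1}$ out of row $i$ turns the matrix into the Hankel matrix $H(t) = \bigl[\,c_{i+j}\, f(a_{i+j})\,\bigr]_{i,j=0}^{q-1}$, whose entry $h_m := c_m f(a_m)$ depends only on $m = i+j$. With $P := c_q = \prod_{s=0}^{q-1} a_s$, periodicity gives the recursion $h_m = P\, h_{m-q}$ for $q\le m\le 2q-2$. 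As $t\to 0^+$ only the factor $a_0 = t$ degenerates, so $h_0\to f(0)\neq 0$ has order $t^0$, every $h_m$ with $1\le m\le q$ is $O(t)$, and every $h_m$ with $m>q$ is $O(t^2)$; more precisely $P\sim A\,t$ and $h_q\sim A f(0)\, t$ with $A := \prod_{l=1}^{q-1}\{lr/q\}\neq 0$.

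Expanding $\det H = \sum_{\pi\in S_q}\mathrm{sgn}(\pi)\prod_i h_{i+\pi(i)}$, the order in $t$ of the $\pi$-term equals $\#\{i : i+\pi(i)\ge 1\} + \#\{i : i+\pi(i)\ge q+1\}$. The first count is $q$, or $q-1$ precisely when $\pi(0)=0$, and the second is non-negative, so every term has order at least $q-1$, with equality forcing both $\pi(0)=0$ and $i+\pi(i)\le q$ for all $i$; an easy induction from $i=q-1$ downward shows this determines the \emph{unique} permutation $\pi^*$ given by $\pi^*(0)=0$ and $\pi^*(i) = q-i$. Hence the coefficient of $t^{q-1}$ in $\det H$ is exactly $\mathrm{sgn}(\pi^*)\, f(0)^q A^{q-1}$, which is nonzero, so $\det H\sim c\,t^{q-1}$ with $c\neq 0$ and in particular $\det H\not\equiv 0$ near $0^+$. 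Since $\Delta(f,r/q) = \bigl(\prod_i c_i^{-1}\bigr)\det H$ with $\prod_i c_i^{-1}$ finite and nonzero on $(0,1/q)$, it follows that $\Delta(f,r/q)\not\equiv 0$ there, which by the first two paragraphs and Theorem \ref{cyclicrat} makes $f$ cyclic for every $\alpha = r/q \in (0,1)\cap\mathbf Q$. The delicate point is the uniqueness of the minimizing permutation $\pi^*$, which is exactly what prevents cancellation in the leading coefficient and makes $f(0)\neq 0$ the crucial hypothesis.
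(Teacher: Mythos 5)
Your proof is correct, and its overall skeleton coincides with the paper's: both reduce to Theorem \ref{cyclicrat}, use the row-permutation symmetry $\Delta(f,r/q)(\{t+r/q\})=\pm\,\Delta(f,r/q)(t)$ to confine attention to the interval $(0,1/q)$, invoke analyticity of $\Delta(f,r/q)$ there to reduce the whole problem to showing $\Delta(f,r/q)\not\equiv 0$, and finally exploit $f(0)\neq 0$ at the left endpoint. The only genuine divergence is how this last non-vanishing is established. The paper notes that $\Delta(f,r/q)$ is right-continuous at $0$ and simply evaluates it at $t=0$: there the factor $a_0=t$ kills every entry of row $0$ except the first and truncates the lower rows, so the matrix is anti-triangular and the determinant equals $(-1)^{(q-2)(q-1)/2}f(0)^q\prod_{i=0}^{q-2}\{(q-1)r/q\}\cdots\{(q-1-i)r/q\}\neq 0$. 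You instead stay inside $(0,1/q)$, rescale to the Hankel matrix $H$, and extract the leading coefficient of $\det H$ as $t\to 0^+$ by an order count over permutations; your unique minimizing permutation $\pi^*$ (with $\pi^*(0)=0$, $\pi^*(i)=q-i$) is precisely the anti-diagonal that survives in the paper's determinant at $t=0$, so the two computations isolate the same term — yours asymptotically, the paper's exactly at the endpoint. The paper's version is shorter; yours avoids extending $\Delta(f,r/q)$ continuously to $t=0$, at the price of the Hankel bookkeeping. One small imprecision to fix: the order in $t$ of the $\pi$-term \emph{equals} your count only generically — if $f$ happens to vanish at some point $\{lr/q\}$ the order can be strictly larger — so ``equals'' should read ``is at least''; this is harmless, since for $\pi\neq\pi^*$ you only use the lower bound, and the $\pi^*$-term is computed exactly from $h_0\to f(0)$ and $h_q\sim A f(0)\,t$, both nonzero precisely because $f(0)\neq 0$.
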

		\begin{proof}
			Let $\alpha = r / q$ be a rational number such that $0 < r < q$ and $r$ and $q$ are coprime. 
			One can remark that the function 
			$
				\abs{
					\Delta(f, r / q)
				}
			$ 
			is a $1 / q$-periodic function by permutation of the rows.
			Besides the function $\Delta(f, r / q)$ is holomorphic on $(0, 1 / q)$ and right-continuous at 0 since $\Delta(f, r / q)(t)$ is equal for every $t \in [0, 1 / q)$ to the determinant
			\[
				\begin{vmatrix}
					\left(
						\left(
							t + 
							\left\{
								\frac {i r} q
							\right\}
						\right) \ldots 
						\left(
							t + 
							\left\{
								\frac {(i + j - 1) r} q
							\right\}
						\right)
						f\left(
							t + 
							\left\{
								\frac {(i + j) r} q
							\right\}
						\right)
					\right) _ {0 \leq i, j \leq q - 1}
				\end{vmatrix}.
			\]
			Suppose that $f$ is not cyclic for $T _ {r / q}$.
			Thus  
			$
				m(\{
					t \in [0, 1] ;
					\Delta(f, r / q)(t) = 0
				\})
				> 0
			$ 
			by Theorem \ref{cyclicrat}.
			Since $\abs{\Delta(f, r / q)}$ is $1 / q$-periodic,
			$
				m(\{
					t \in [0, 1 / q) ;
					\abs{\Delta(f, r / q)(t)} = 0
				\})
				> 0.
			$
			Hence $\Delta(f, r / q)(t) = 0$ for every $t \in [0, 1 / q)$ because $\Delta(f, r / q)$ is holomorphic on $(0, 1 / q)$ and right-continuous at 0. 
			However $\Delta(f, r / q)(0)$ is equal to the determinant
			\[
				\left|
					\begin{array}{c c c c l}
						f(0) & 0 & \cdots & \cdots & 0 \\
						f(\{r / q\}) & \{r / q\} f(\{2r / q\}) & \cdots & \cdots & \{r / q\} \ldots \{(q - 1) r / q\} f(0) \\
						f(\{2 r / q\}) & \{2 r / q\} f(\{3 r / q\}) & & \reflectbox{$\ddots$} & 0 \\
						\vdots & \vdots & \reflectbox{$\ddots$} & \reflectbox{$\ddots$} & \vdots \\
						f(\{(q - 1) r / q\}) & \{(q - 1) r / q\} f(0) & 0 & \cdots & 0
					\end{array}
				\right|
			\]
			which is in turn equal to
			\[
				(-1) ^ {(q - 2)(q - 1) / 2} f(0) ^ q \prod _ {i = 0} ^ {q - 2} \{(q - 1) r / q\} \cdots \{(q - 1 - i) r / q\} \ne 0.
			\]
			This contradiction shows that $f$ is cyclic for $T _ \alpha.$
		\end{proof}
	
		\begin{Rmq}
			In particular, for every $\alpha \in (0, 1) \cap \mathbf Q$ the Bishop operator $T _ \alpha$ is cyclic  and the constant function $\mathbf 1$ is a common cyclic vector for these operators.
		\end{Rmq}
	
		\begin{Def}\label{common}
			For every subset $A$ of $[0, 1]$, we define the set
			\[
				\text{Cycl} _ A =
				\bigcap _ {\alpha \in A}
					\{
						f \in L ^ p([0, 1]) ;
						f \text{ is cyclic for } T _ \alpha
					\}.
			\]
			of common cyclic vectors for all operators $T _ \alpha$, $\alpha \in A.$
			
			In particular $\text{Cycl} _ {\mathbf Q \cap (0, 1)}$ contains any holomorphic function $f$ on an open neighborhood of $[0, 1]$ such that $f(0) \ne 0.$
		\end{Def}
	\subsection{Cyclicity of Bishop operators in the irrational case}
			
		We are now going to use the cyclicity of the Bishop operator $T _ \alpha$ for all rational numbers $\alpha \in (0, 1)$ to deduce the cyclicity of $T _ \alpha$ for some irrational numbers $\alpha \in (0, 1).$
		To do so we will first need a notion of large sets in Baire spaces, which are called \textit{co-meager sets}.
		We refer the reader to \cite[Section 8.A]{kec95} for more on this notion.
		
		\begin{Def}[{\cite[Section 8.A]{kec95}}]
			Let $X$ be a Polish space, that is to say a separable completely metrizable topological space.
			A subset $A$ of $X$ is said to be:
			\begin{enumerate}[(i)]
				\item a co-meager set in $X$ if it contains a dense $G_\delta$-set in $X$;
				\item a meager set in $X$ if the complement $X \backslash A$ is a co-meager set in $X.$
				\item satisfying the Baire property is there exists an open set $U$ of $X$ such that 
				$
					A \Delta U =
					A \backslash U \cup U \backslash A
				$ 
				is a meager set in $X.$
			\end{enumerate}
		\end{Def}
	
		Our aim is now to prove the following theorem:
		
		\begin{Thm}\label{comeager}
			The set 
			$
			\{
			\alpha \in [0, 1] ;
			T _ \alpha \text{ is cyclic}
			\}
			$
			is a co-meager set in $[0, 1].$
		\end{Thm}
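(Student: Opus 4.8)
The plan is to produce a single dense $G_\delta$ subset of $[0,1]$ consisting of parameters $\alpha$ for which one fixed, carefully chosen function is cyclic for $T_\alpha$. Since the cyclicity of $T_\alpha$ only requires the \emph{existence} of a cyclic vector, this dense $G_\delta$ set will be contained in $\{\alpha \in [0,1] ; T_\alpha \text{ is cyclic}\}$, which is therefore co-meager in $[0,1]$ by definition.

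First I would fix once and for all a function $f$ that is holomorphic on an open neighborhood of $[0,1]$ with $f(0) \ne 0$ --- for instance the constant function $\mathbf 1$, so that $f \in \text{Cycl} _ {\mathbf Q \cap (0,1)}$ by Theorem \ref{cyclic} and by Definition \ref{common}. I would also fix a countable dense subset $(g _ k) _ {k \geq 1}$ of $L ^ p([0,1])$ and set
\[
	C _ f = \{\alpha \in [0,1] ; f \text{ is cyclic for } T _ \alpha\}.
\]
By density of $(g _ k) _ {k \geq 1}$, a parameter $\alpha$ lies in $C _ f$ if and only if for every $k, m \geq 1$ there exists $P \in \mathbf C[\xi]$ such that $\|P(T _ \alpha) f - g _ k\| _ p < 1 / m$, so that
\[
	C _ f =
	\bigcap _ {k \geq 1} \bigcap _ {m \geq 1}
		\bigcup _ {P \in \mathbf C[\xi]}
			\{\alpha \in [0,1] ; \|P(T _ \alpha) f - g _ k\| _ p < 1 / m\}.
\]

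The crux of the argument --- and essentially its only technical obstacle --- is to prove that for each fixed $P \in \mathbf C[\xi]$ the map $\alpha \mapsto P(T _ \alpha) f$ is continuous from $[0,1]$ into $L ^ p([0,1])$; granting this, each set appearing in the union above is open, every union over $P$ is open, and hence $C _ f$ is a $G _ \delta$-set. To establish the continuity I would use the explicit iterate formula $T _ \alpha ^ n f(x) = x \{x + \alpha\} \ldots \{x + (n - 1)\alpha\} f(\{x + n \alpha\})$: if $\alpha _ j \to \alpha$, then for all but finitely many $x \in [0,1]$ none of the points $x + k\alpha$ ($0 \leq k \leq n$) is an integer, whence $\{x + k\alpha _ j\} \to \{x + k\alpha\}$ and, by continuity of $f$ on $[0,1]$, $T _ {\alpha _ j} ^ n f(x) \to T _ \alpha ^ n f(x)$ for a.e. $x$. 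Since the weights are bounded by $1$ and $f$ is bounded, $|T _ {\alpha _ j} ^ n f| \leq \|f\| _ \infty \in L ^ p([0,1])$, so dominated convergence yields $T _ {\alpha _ j} ^ n f \to T _ \alpha ^ n f$ in $L ^ p$; linearity then gives the continuity of $\alpha \mapsto P(T _ \alpha) f$.

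Finally I would establish density. By the choice of $f$ and Theorem \ref{cyclic}, $C _ f$ contains $(0,1) \cap \mathbf Q$, which is dense in $[0,1]$. Hence $C _ f$ is a dense $G _ \delta$-set in $[0,1]$, and since $C _ f \subset \{\alpha \in [0,1] ; T _ \alpha \text{ is cyclic}\}$, the latter set contains a dense $G _ \delta$-set and is therefore co-meager, as claimed. (The hypothesis $\sigma _ p(T _ \alpha ^ *) = \emptyset$ from Theorem \ref{noeigenvalue}, together with Proposition \ref{cyclicbirkhoff}, ensures that the cyclicity framework applies cleanly, although the argument above uses only the definition of cyclicity and the continuity established in the previous step.)
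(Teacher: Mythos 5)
Your proof is correct, but it takes a genuinely different route from the paper's. The paper works in the product space $[0,1] \times L^p([0,1])$: it shows that the set $A$ of pairs $(\alpha, f)$ such that $f$ is cyclic for $T_\alpha$ is a dense $G_\delta$-set --- where density requires Theorem \ref{noeigenvalue} and Proposition \ref{cyclicbirkhoff} to produce, for each rational $\alpha$, a dense set of cyclic vectors --- and then projects down to the $\alpha$-axis via the Kuratowski--Ulam theorem (Corollary \ref{Kuratowski-UlamCoro}). You instead fix a single vector $f = \mathbf 1 \in \text{Cycl}_{\mathbf Q \cap (0,1)}$ and show directly that $C_f = \{\alpha \in [0,1] ; f \text{ is cyclic for } T_\alpha\}$ is a dense $G_\delta$-set in $[0,1]$: it is $G_\delta$ because each map $\alpha \mapsto P(T_\alpha)f$ is continuous, and dense because it contains $(0,1) \cap \mathbf Q$ by Theorem \ref{cyclic}. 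This is essentially the ``second, more explicit strategy'' that the paper itself mentions just before Lemma \ref{continuous} and in Remark \ref{rmkcomeager}, but you carry it out directly rather than through the continued-fraction condition of Theorem \ref{psi}. Your route is more elementary: it dispenses with Kuratowski--Ulam, with Theorem \ref{noeigenvalue} and with Proposition \ref{cyclicbirkhoff} entirely (your closing parenthetical correctly observes they are not needed); moreover, since your $f$ is continuous on $[0,1]$, your one-variable continuity argument by dominated convergence is simpler than the paper's Lemma \ref{continuous}, which must handle arbitrary $f \in L^p([0,1])$ jointly in $(\alpha, f)$ via approximation by continuous functions. Each approach buys something the other does not: yours exhibits one explicit common cyclic vector ($\mathbf 1$) for a co-meager set of parameters, while the paper's yields the stronger conclusion that for a co-meager set of $\alpha$ the set of cyclic vectors of $T_\alpha$ is itself co-meager in $L^p([0,1])$.
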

		
		To do so, we recall the Kuratowski-Ulam Theorem.
		
		\begin{Thm}[Kuratowski-Ulam, {\cite[Theorem 8.41]{kec95}}]\label{Kuratowski-Ulam}
			Let $X$ and $Y$ be two Polish spaces and let $A$ be a subset of $X$ satisfying the Baire property.
			Then the following assertions are equivalent:
			\begin{enumerate}[(i)]
				\item 
					$A$ is a co-meager set in $X \times Y$;
				\item 
					$
						\{
							x \in X ;
							\{
								y \in Y ;
								(x, y) \in A
							\}
							\text{ is a co-meager set in }
							Y
						\}
						\text{ is a co-meager set in }
						X
					$;
				\item 
					$
						\{
							y \in Y ;
							\{
								x \in X ;
								(x, y) \in A
							\}
							\text{ is a co-meager set in }
							X
						\}
						\text{ is a co-meager set in }
						Y.
					$
			\end{enumerate}
		\end{Thm}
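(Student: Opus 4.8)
The plan is to prove this as the topological analogue of Fubini's theorem, reducing the whole statement to a single projection lemma about open dense sets. Throughout, for $A \subset X \times Y$ and $x \in X$ I write $A _ x = \{y \in Y ; (x, y) \in A\}$ for the vertical section. Since the statement is symmetric in $X$ and $Y$, it suffices to establish the equivalence of (i) and (ii), the equivalence with (iii) following by exchanging the roles of the two factors.

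First I would isolate the key lemma: \emph{if $U$ is an open dense subset of $X \times Y$, then $\{x \in X ; U _ x \text{ is dense in } Y\}$ is co-meager in $X$}. Since $Y$ is Polish, hence second countable, fix a countable base $(V _ n) _ {n \geq 1}$ of non-empty open sets of $Y$. For each $n$, the set $U ^ n = \{x \in X ; U _ x \cap V _ n \ne \emptyset\}$ is exactly the projection onto $X$ of the open set $U \cap (X \times V _ n)$, hence open; it is moreover dense, because for any non-empty open $W \subset X$ the box $W \times V _ n$ meets $U$ by density, producing a point of $W \cap U ^ n$. As $U _ x$ is dense in $Y$ if and only if $x \in \bigcap _ n U ^ n$, the set in question is a countable intersection of open dense sets, hence co-meager. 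Applying the lemma to the complements $F _ n = (X \times Y) \backslash U _ n$ of open dense sets yields the companion fact I will reuse: \emph{for every meager $M \subset X \times Y$, the set $\{x ; M _ x \text{ is meager in } Y\}$ is co-meager in $X$}, since $M \subset \bigcup _ n F _ n$ and each section $(F _ n) _ x$ is nowhere dense for co-meagerly many $x$.

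From the lemma the implication (i) $\Rightarrow$ (ii) is immediate: a co-meager $A$ contains a dense $G _ \delta$-set $\bigcap _ n U _ n$ with each $U _ n$ open dense, and for every $x$ in the co-meager set $\{x ; (U _ n) _ x \text{ is dense for all } n\}$ the section $A _ x$ contains the dense $G _ \delta$-set $\bigcap _ n (U _ n) _ x$, hence is co-meager in $Y.$

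The main work is the converse (ii) $\Rightarrow$ (i), and this is the step where the Baire property of $A$ enters decisively. Using it I would write $A = U \Delta M$ with $U$ open and $M = A \Delta U$ meager. Because $A$ and $U$ differ by a meager set and $U$ is open, $A$ is co-meager if and only if $U$ is dense in $X \times Y.$ I then argue contrapositively: if $A$ is \emph{not} co-meager, then $U$ is not dense, so some non-empty basic box $W \times V$ is contained in $(X \times Y) \backslash U.$ For every $x \in W$ the section $U _ x$ misses $V$ and is therefore not dense; and for the co-meagerly many $x$ with $M _ x$ meager, the identity $A _ x = U _ x \Delta M _ x$ forces $A _ x$ not to be co-meager, for otherwise $U _ x$, differing from $A _ x$ by the meager set $M _ x$, would be co-meager and hence dense, contradicting $U _ x \cap V = \emptyset.$ Thus $\{x ; A _ x \text{ is co-meager}\}$ is disjoint from $W \cap \{x ; M _ x \text{ is meager}\}$, which is non-meager as the intersection of a non-empty open set with a co-meager set in the Baire space $X$; consequently $\{x ; A _ x \text{ is co-meager}\}$ cannot be co-meager, and (ii) fails. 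The delicate point throughout is the bookkeeping of the meager error term $M$ and its sections, and every reduction rests on the projection lemma, so getting the open-dense case right at the outset is precisely what makes the remaining steps routine.
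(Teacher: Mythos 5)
Your proof is correct, and it is precisely the classical Kuratowski--Ulam argument of the cited reference \cite[Theorem 8.41]{kec95}: the projection lemma for open dense sets (via a countable base of $Y$ and openness of the projection), its upgrade to meager sets through closed nowhere dense covers, and the decisive use of the Baire property through the decomposition $A = U \,\Delta\, M$ with $U$ open, reducing co-meagerness of $A$ to density of $U$. The paper imports this theorem without proof, so there is no in-paper argument to compare against; your write-up is a complete, self-contained rendering of the standard proof, with the symmetry reduction from (iii) to (ii) handled correctly.
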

	
		One can remark that a co-meager set satisfies the Baire property, the chosen open set being the full space.
		Then we deduce from the Kuratowski-Ulam's Theorem the following corollary.
		
		\begin{Coro}\label{Kuratowski-UlamCoro}
			Let $X$ and $Y$ be two Polish spaces and let $A$ be a co-meager set in $X \times Y.$
			Then the sets
			\[
				\{
					x \in X ;
					\{
						y \in Y ;
						(x, y) \in A
					\}
					\text{ is a co-meager set in } 
					Y
				\}
			\]
			and
			\[
				\{
					y \in Y ;
					\{
						x \in X ;
						(x, y) \in A
					\}
					\text{ is a co-meager set in }
					X
				\}
			\]
			are co-meager sets in $X$ and $Y$ respectively.
		\end{Coro}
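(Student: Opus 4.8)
The plan is to apply the Kuratowski-Ulam Theorem (Theorem \ref{Kuratowski-Ulam}) essentially verbatim, exploiting the observation recorded just above the statement that a co-meager set automatically satisfies the Baire property. The corollary is really just an unpacking of the equivalences (i) $\Leftrightarrow$ (ii) and (i) $\Leftrightarrow$ (iii) in that theorem, once the Baire-property hypothesis has been checked for $A$.

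First I would verify the hypothesis of Theorem \ref{Kuratowski-Ulam}. Since $A$ is co-meager in $X \times Y$, its complement $(X \times Y) \backslash A$ is meager by the very definition of co-meagerness. Choosing $U = X \times Y$ in the definition of the Baire property, one has
\[
	A \Delta U = A \backslash U \cup U \backslash A = (X \times Y) \backslash A,
\]
which is meager, so $A$ satisfies the Baire property. This is exactly the remark made before the corollary, and it is the only nontrivial point of the argument.

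With this hypothesis secured, Theorem \ref{Kuratowski-Ulam} provides the equivalence of its three assertions. The assumption that $A$ is co-meager in $X \times Y$ is precisely assertion (i), so (i) holds. The implication (i) $\Rightarrow$ (ii) then gives that the set of those $x \in X$ for which $\{y \in Y ; (x, y) \in A\}$ is co-meager in $Y$ is itself co-meager in $X$; this is the first assertion to be proved. Symmetrically, the implication (i) $\Rightarrow$ (iii) gives the corresponding statement for the set of $y \in Y$, which is the second assertion.

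I do not anticipate any genuine obstacle here: the entire content is the reduction to the Baire property together with a direct reading of the equivalences in Theorem \ref{Kuratowski-Ulam}. The only step that merits a line of writing is the verification that a co-meager set satisfies the Baire property, and even that is immediate once one takes the ambient space as the approximating open set.
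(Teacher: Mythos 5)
Your proposal is correct and matches the paper's argument exactly: the paper likewise notes that a co-meager set satisfies the Baire property (taking the full space as the open set in the definition) and then reads off the two assertions from the implications (i) $\Rightarrow$ (ii) and (i) $\Rightarrow$ (iii) of the Kuratowski--Ulam Theorem. There is nothing missing and nothing done differently.
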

			
		To prove Theorem \ref{comeager}, our first strategy is to directly apply Corollary \ref{Kuratowski-UlamCoro} to the set 
		\[
			A = 
			\{
				(\alpha, f) \in [0, 1] \times L ^ p([0, 1]) ;
				f \text{ is cyclic for } T _ \alpha
			\},
		\]
		which does not give any information about the irrationals $\alpha$ such that $T _ \alpha$ is cyclic.
		A second more explicit strategy would be to find a $G _ \delta$-set in 
		$
			\{
				\alpha \in [0, 1] ; 
				T _ \alpha \text{ is cyclic}
			\}
		$
		(see Remark \ref{rmkcomeager}).
		
		We will now show that $A$ is a dense $G _ \delta$-set in $[0, 1] \times L ^ p([0, 1])$.
		To do so, we will need the following lemma.
		
		\begin{Lem}\label{continuous}
			For every $Q \in \mathbf C [\xi]$, the map
			\[
				\fonction{
					\phi _ Q \colon
				}{
					([0, 1], \abs\cdot) \times (L ^ p([0, 1]), \| \cdot \| _ p)
				}{
					(L ^ p([0, 1]), \| \cdot \| _ p)
				}{
					(\alpha, f)
				}{
					Q(T _ \alpha) f
				}
			\]
				is continuous.
		\end{Lem}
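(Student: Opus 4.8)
The plan is to reduce the statement, by linearity, to the continuity of the monomial maps. Writing $Q = \sum _ {k = 0} ^ d c _ k \xi ^ k$, we have $\phi _ Q = \sum _ {k = 0} ^ d c _ k \phi _ {\xi ^ k}$, and since a finite linear combination of continuous maps is continuous, it suffices to prove that for every fixed $n \geq 0$ the map $(\alpha, f) \mapsto T _ \alpha ^ n f$ is continuous. As all spaces involved are metric, I would argue with sequences. Given $(\alpha _ k, f _ k) \to (\alpha, f)$, the uniform bound $\|T _ {\alpha _ k} ^ n\| \leq \|T _ {\alpha _ k}\| ^ n \leq 1$ from Proposition \ref{nothypercyclic} lets me separate the two variables:
\[
	\|T _ {\alpha _ k} ^ n f _ k - T _ \alpha ^ n f\| _ p
	\leq
	\|T _ {\alpha _ k} ^ n (f _ k - f)\| _ p + \|T _ {\alpha _ k} ^ n f - T _ \alpha ^ n f\| _ p
	\leq
	\|f _ k - f\| _ p + \|T _ {\alpha _ k} ^ n f - T _ \alpha ^ n f\| _ p.
\]
The first term tends to $0$ by assumption, so everything reduces to proving that, for each \emph{fixed} $f \in L ^ p([0, 1])$, the map $\alpha \mapsto T _ \alpha ^ n f$ is continuous.

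To handle this remaining point I would factor the iterate as $T _ \alpha ^ n f = w _ \alpha ^ {(n)} \cdot U _ {n \alpha} f$, where $U _ \beta$ is the translation operator and $w _ \alpha ^ {(n)}(x) = \prod _ {j = 0} ^ {n - 1} \{x + j \alpha\}$ is the weight appearing in the formula for the iterates of $T _ \alpha$, so that $0 \leq w _ \alpha ^ {(n)} \leq 1$. For $\alpha _ k \to \alpha$ I then split
\[
	\|w _ {\alpha _ k} ^ {(n)} U _ {n \alpha _ k} f - w _ \alpha ^ {(n)} U _ {n \alpha} f\| _ p
	\leq
	\|U _ {n \alpha _ k} f - U _ {n \alpha} f\| _ p
	+
	\|(w _ {\alpha _ k} ^ {(n)} - w _ \alpha ^ {(n)}) U _ {n \alpha} f\| _ p,
\]
using $0 \leq w _ {\alpha _ k} ^ {(n)} \leq 1$ to control the first term. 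The first term tends to $0$ by the strong continuity on $L ^ p([0, 1])$ of the group of measure-preserving translations $U _ \beta$, a standard fact that follows from the density of continuous functions together with $\|U _ \beta\| = 1$ (a routine $3\varepsilon$ argument).

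For the second term I would argue by dominated convergence. First, $w _ {\alpha _ k} ^ {(n)} \to w _ \alpha ^ {(n)}$ pointwise almost everywhere: for each $j$ and each $x$ with $x + j \alpha \notin \mathbf Z$, the map $\beta \mapsto \{x + j \beta\}$ is continuous at $\beta = \alpha$, and for fixed $\alpha$ and $j$ the excluded $x$ form a finite, hence null, set; taking the finite union over $j \in \{0, \ldots, n - 1\}$ and the product of the factors gives almost-everywhere convergence. Second, since $|(w _ {\alpha _ k} ^ {(n)} - w _ \alpha ^ {(n)}) U _ {n \alpha} f| ^ p \leq |U _ {n \alpha} f| ^ p$ with $U _ {n \alpha} f \in L ^ p([0, 1])$, the Dominated Convergence Theorem yields that the second term tends to $0$, which completes the argument.

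The only genuinely delicate point is the lack of joint continuity of $(\alpha, x) \mapsto w _ \alpha ^ {(n)}(x)$, caused by the jumps of the fractional part; this is exactly what rules out a uniform estimate on the weight and forces the combination of almost-everywhere convergence with the Dominated Convergence Theorem. Everything else is the product-splitting and the elementary strong continuity of translations.
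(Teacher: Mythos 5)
Your proof is correct, and it takes a genuinely different route from the paper's at the level of how polynomials are handled. The paper writes $\phi_Q$ as a composition $\varphi_{3,Q} \circ ((\varphi_{2,Q} \circ \varphi_1)\oplus \mathrm{id})$, where $\varphi_1 \colon \alpha \mapsto T_\alpha$ takes values in the unit ball of $\mathcal B(L^p([0,1]))$ equipped with the strong operator topology, $\varphi_{2,Q} \colon T \mapsto Q(T)$ is SOT-continuous on bounded sets, and $\varphi_{3,Q} \colon (T,f) \mapsto Tf$ is evaluation; the only analytic work is then the SOT-continuity of $\alpha \mapsto T_\alpha$, which the paper proves by exactly your ``routine $3\varepsilon$ argument'' (approximation of $f$ by a uniformly continuous $g$, contractivity of $T_\beta$, and splitting the integral at the wrap-around point of the fractional part). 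The essential difference is that the paper never touches the $\alpha$-dependent weights $x\{x+\alpha\}\cdots\{x+(n-1)\alpha\}$: SOT-continuity of $T \mapsto T^n$ on bounded sets (a uniform-boundedness induction, left as ``easily proved'') bootstraps everything from the single operator $T_\alpha$, whose weight $x$ does not move with $\alpha$. You instead work directly from the iterate formula, which forces you to confront the moving weights; your resolution --- a.e.\ pointwise convergence of $w^{(n)}_{\alpha_k}$ off the finite set of $x$ where some $x + j\alpha \in \mathbf Z$, domination by $\abs{U_{n\alpha}f}^p$ since the weights lie in $[0,1]$, then dominated convergence --- is correct and self-contained. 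Your route is more hands-on and avoids operator-topology formalism entirely; the paper's route is shorter once the SOT lemmas are granted, and isolates the translation-continuity estimate as the unique analytic ingredient, which it then reuses verbatim for the weighted operators $T_{\phi,\alpha}$ in Lemma \ref{continuouswt}.
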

		\begin{proof}
			For any $M > 0$ we define $\mathcal B _ M(L ^ p([0, 1]))$ to be the set of the operators $T$ of $L ^ p([0, 1])$ such that $\|T \| \leq M.$ 
			We denote by SOT the \textit{Strong Operator Topology} on $L ^ p([0, 1])$, i.e  the topology generated by the seminorms $T \mapsto \|T f \| _ p$ for every $f \in L ^ p([0, 1])$.
			Then one can write 
			\begin{align*}
				\phi _ Q = \varphi _ {3, Q} \circ ((\varphi _ {2, Q} \circ \varphi _ 1) \oplus id)
			\end{align*}
			where
			\[
				\fonction{
					id \colon
				}{
					(L ^ p([0, 1]), \| \cdot \| _ p)
				}{
					(L ^ p([0, 1]), \| \cdot \| _ p)
				}{
					f
				}{
					f,
				}
			\]
			\[
				\fonction{
					\varphi _ 1 \colon
				}{
					([0, 1], \abs{\cdot})
				}{
					\left(
					\mathcal B _ 1(L ^ p([0, 1])), SOT
					\right)
				}{
					\alpha
				}{
					T _ \alpha,
				}
			\]
			\[
				\fonction{
					\varphi _ {2, Q} \colon
				}{
					\left(
					\mathcal B _ 1 (L ^ p([0, 1])), SOT
					\right)
				}{
					\left(
					\mathcal B _ {\| Q \| _ 1} ( L ^ p ([0, 1])), SOT
					\right)
				}{
					T
				}{
					Q(T),
				}
			\]
			\[
				\fonction{
					\varphi _ {3, Q} \colon
				}{
					(\mathcal B _ {\| Q \| _ 1} (L ^ p([0, 1])) \times SOT) 
					\times 
					(L ^ p([0, 1]), \| \cdot \| _ p)
				}{
					(L ^ p([0, 1]), \| \cdot \| _ p)
				}{
					(T, f)
				}{
					Tf.
				}
			\]
			The continuity of the maps $\varphi _ {2, Q}$ and $\varphi _ {3, Q}$ being easily proved, we only need to prove that $\varphi _ 1$ is a continuous map.
			Let $\alpha \in [0, 1]$, and let $(\alpha _ n) _ {n \geq 1}$ to be a sequence of elements of $[0, 1]$ such that $\alpha _ n \to \alpha$ as $n \to + \infty.$
			Given $f \in L ^ p([0, 1])$, we are going to show that 
			$
				\|
					T _ {\alpha _ n} f - T _ \alpha f
				\| _ p 
				\to 0
			$ as $n \to + \infty.$
			
			Let $\varepsilon > 0.$ 
			By density there exists $g \in \mathcal C([0, 1])$ such that $\|f - g \| _ p < \varepsilon.$
			For every $n \geq 1$,
			\begin{align*}
				\| T _ {\alpha _ n} f - T _ \alpha f \| _ p
				& \leq 
				\| T _ {\alpha _ n} f - T _ {\alpha _ n} g \| _ p +
				\| T _ {\alpha _ n} g - T _ \alpha g \| _ p +
				\| T _ \alpha g - T _ \alpha f \| _ p.
			\end{align*}
			On the one hand, we have for every $\beta \in [0, 1]$ that
			\[
				\| T _ \beta f - T _ \beta g \| _ p ^ p 
				= 
				\int _ 0 ^ 1 
					\abs{
						t f(\{t + \beta\}) - t g(\{t + \beta\})
					} ^ p
				dt 
				\leq
				\int _ 0 ^ 1 
					\abs{
						f(s) - g(s)
					} ^ p
				ds
				\leq 
				\| f - g \| _ p ^ p, 			
			\]
			so that 
			$
				\|T _ {\alpha _ n} f - T _ {\alpha _ n} g \| _ p + 
				\|T _ {\alpha} g - T _ \alpha f \| _ p 
				\leq 
				2 \|f - g \| _ p 
				< 2 \varepsilon.
			$
			On the other hand 
			\begin{align*}
				\| T _ {\alpha _ n} g - T _ \alpha g \| _ p ^ p 
				& =
				\int _ 0 ^ 1
					\abs{
						t g(\{t + \alpha _ n\}) - tg(\{t + \alpha\})
					} ^ p
				dt \\
				& \leq
				\int _ 0 ^ 1 
					\abs{
						g(\{t + \alpha _ n\}) - g(\{t + \alpha\})
					} ^ p
				dt \\
				& \leq
				\int _ 0 ^ 1 
					\abs{
						g(y) - g(\{y + \alpha - \alpha _ n\})
					} ^ p
				dy \\
				& \leq
				\int _ 0 ^ {1 - \{\alpha - \alpha _ n\}} 
					\abs{
						g(y) - g(y + \{\alpha - \alpha _ n\})
					} ^ p
				dy \\
				& \phantom{\leq \int _ 0 ^ {1 - \{\alpha - \alpha _ n\}}} +
				\int _ {1 - \{\alpha - \alpha _ n\}} ^ 1
					\abs{
						g(y) - g(y + \{\alpha - \alpha _ n\} - 1)
					} ^ p
				dy.
			\end{align*}
			Since $g$ is uniformly continuous on $[0, 1]$, there exists $\delta > 0$ such that 
			$
				\abs{
					g(x) - g(y)
				}
				< \varepsilon
			$ 
			whenever $x, y \in [0, 1]$ satisfy $\abs{x - y} < \delta.$
			Moreover since 
			$\alpha _ n \to \alpha$ as $n \to + \infty$, 
			there exists $n _ 0 \geq 1$ such that 
			$
				\abs{
					\alpha - \alpha _ n
				}
				< \min(\delta, \varepsilon ^ p /(2 \|g \| _ \infty) ^ p, 1)
			$ 
			for every $n \geq n _ 0.$
			Fix $n \geq n _ 0.$
			
			First, we suppose that $0 \leq \alpha - \alpha _ n.$
			Then $\{\alpha - \alpha _ n\} = \alpha - \alpha _ n = \abs{\alpha - \alpha _ n}$ and
			\begin{align*}
				\| T _ {\alpha _ n} g - T _ \alpha g \| _ p ^ p 
				& \leq
				(1 - \{\alpha - \alpha _ n\}) \varepsilon ^ p + 2 ^ p \| g \| _ \infty ^ p \{\alpha - \alpha _ n\} \\
				& \leq (1 - \abs{\alpha - \alpha _ n}) \varepsilon ^ p + 2 ^ p \|g \| _ \infty ^ p \abs{\alpha - \alpha _ n} \\
				& \leq 
				2 \varepsilon ^ p.
			\end{align*}
			
			Then, we suppose that $0 > \alpha - \alpha _ n.$
			So $\{\alpha - \alpha _ n\} = \alpha - \alpha _ n + 1 = 1 - \abs{\alpha - \alpha _ n}$ and 
			\begin{align*}
				\|T _ {\alpha _ n} g - T _ \alpha g \| _ p ^ p 
				& \leq
				(1 - \{\alpha - \alpha _ n\}) 2 ^ p \| g \| _ \infty ^ p + \{\alpha - \alpha _ n\} \varepsilon ^ p \\
				& \leq
				\abs{\alpha - \alpha _ n} 2 ^ p \| g \| _ \infty ^ p + (1 - \abs{\alpha - \alpha _ n}) \varepsilon ^ p \\
				& \leq
				2 \varepsilon ^ p.
			\end{align*}
			Eventually we get that 
			$
				\| T _ {\alpha _ n} f - T _ \alpha f \| _ p 
				\leq
				(2 + 2 ^ {1 / p}) \varepsilon
			$
			for every $n \geq n _ 0$, and thus $\varphi _ 1$ is continuous at $\alpha.$
		\end{proof}
		
		We can now show Theorem \ref{comeager}.
		
		\begin{proof}[Proof of Theorem \ref{comeager}]
			First, we prove that the set previously defined as
			$
				A =
				\{
					(\alpha, f) \in [0, 1] \times L ^ p ([0, 1]) ;
					f \text{ is cyclic for } T _ \alpha
				\}
			$ 
			is a $G _ \delta$-set in $[0, 1] \times L ^ p ([0, 1]).$
			Let $(U _ n) _ {n \geq 1}$ be a basis of open sets in $L ^ p([0, 1]).$
			For every $(\alpha, f) \in [0, 1] \times L ^ p([0, 1])$, 
			$f$ is cyclic for $T _ \alpha$ if and only if for every $n \geq 1$, there exists $Q \in \mathbf C[\xi]$ such that $Q(T _ \alpha)f \in U _ n.$
			Thus 
			\[
				A =
				\bigcap _ {n \geq 1}
					\bigcup _ {Q \in \mathbf C[\xi]}
						\phi _ Q ^ {-1}(U _ n)
			\]
			where the map $\phi _ Q$ is defined by $\phi _ Q(\alpha, f) = Q(T _ \alpha) f$ for every $(\alpha, f) \in [0, 1] \times L ^ p([0, 1])$ and is continuous by Lemma \ref{continuous}.
			So $A$ is a $G _ \delta$-set in $[0, 1] \times L ^ p([0, 1]).$
			
			We next prove that the set $A$ is dense in $[0, 1] \times L ^ p([0, 1]).$
			Let $g \in L ^ p([0, 1])$, let $\beta \in [0, 1]$, and $\varepsilon > 0.$
			There exists $\alpha \in (0, 1) \cap \mathbf Q$ such that $\abs{\beta - \alpha} < \varepsilon.$ 
			Since $T _ {\alpha}$ is cyclic by Theorem \ref{cyclic} and since $\sigma _ p ( T _ {\alpha} ^ *) = \emptyset$ by Theorem \ref{noeigenvalue}, the set of cyclic vectors for $T _ {\alpha}$ is a dense $G _ \delta$-set by Proposition \ref{cyclicbirkhoff}.
			Thus there exists $f \in L ^ p([0, 1])$ cyclic for $T _ {\alpha}$ such that $\|g - f\| _ p < \varepsilon.$
			So $(\alpha, f) \in A$ and satisfies $\abs{\beta - \alpha} < \varepsilon$ and $\|g - f\| _ p < \varepsilon.$
			Hence $A$ is a dense $G _ \delta$-set in $[0, 1] \times L ^ p([0, 1])$.
			
		Then the set
			\[
			B
			= 
			\{
			\alpha \in [0, 1] ;
			\{
			f \in L ^ p([0, 1]) ;
			f \text{ is cyclic for } T _ \alpha
			\}
			\text{ is a co-meager set in }
			L ^ p([0, 1])
			\}
			\]
			is a co-meager set in $[0, 1]$ by Corollary \ref{Kuratowski-UlamCoro}.
			Moreover,
			$
				B \subset
				\{
					\alpha \in [0, 1] ;
					T _ \alpha \text{ is cyclic}
				\},
			$ 
			and Theorem \ref{comeager} follows.
		\end{proof}
	
		We want now to explicit a set of irrational numbers $\alpha$ in $[0, 1]$ such that $T _ \alpha$ is a cyclic operator.
		More precisely, we will give a sufficient condition 
		on $\alpha$ expressed in terms of rational approximations implying that $T _ \alpha$ is cyclic.
		Some results on approximations of irrationals by rational are recalled here and can be found in \cite{bug04}.
		
		\begin{Def}[{\cite[Section 1.2]{bug04}}]
			Let $n \geq 0$ and let $(a _ k) _ {0 \leq k \leq n}$ be such that $a _ 0 \in \mathbf Z$ and $a _ k \in \mathbf N$ for every $k \in \{1, \ldots, n\}.$
			The rational number
			\[
				a _ 0 + 
				\frac 1 
				{
					a _ 1 + 
					\frac 1 
					{
						\ddots +
						\frac 1
						{
							a _ n
						}
					}
				}
			\] 
			is called a \textit{finite continued fraction} and is written $[a _ 0 ; a _ 1, \ldots, a _ n].$
		\end{Def}
	
		Let us now recall the continued fraction expansion of an irrational number.
		
		\begin{Def}[{\cite[Definition 1.2]{bug04}}]
			Let $x \in [0, 1] \backslash \mathbf Q.$
			Let $a _ 0 \in \mathbf Z$ and $\xi _ 0 \in (0, 1)$ such that 
			$
				x = a _ 0 + \xi _ 0
			.$
			Let $(a _ k) _ {k \geq 1}$ be a sequence of positive integers and $(\xi _ k) _ {k \geq 1}$ be a sequence of elements in $(0, 1)$ such that 
			$
				1 / \xi _ k = a _ {k + 1} + \xi _ {k + 1}
			$ 
			for every $k \geq 0.$
			
			The rational numbers of the sequence
			$
				(p _ n / q _ n) _ {n \geq 0}
				=
				([a _ 0 ; a _ 1 , \ldots, a _ n]) _ {n \geq 0}
			$ 
			are called the \textit{convergents} of $x.$
		\end{Def}
	
		The following result shows that these convergents give an approximation by rational numbers of an irrational number, which turns out to be optimal.
		
		\begin{Prop}[{\cite[Theorem 1.3, Corollary 1.4 and Theorem 1.4]{bug04}}]
			Let $x$ be an irrational number and $(p _ n / q _ n) _ {n \geq 0}$ its convergents.
			If
			$p _ {-1} = 1$, $q _ {-1} = 0$, $p _ 0 = a _ 0$ and $q _ 0 = 1$, then $p _ n = a _ n p _ {n - 1} +p _ {n - 2}, q _ n = a _ n q _ {n - 1} + q _ {n - 2}$ and $p _ n$ and $q _ n$ are coprime for every $n \geq 1.$
			
			Moreover the convergents $(p _ n / q _ n) _ {n \geq 0}$ converge to $x$ and satisfy
			\[
				\abs{
					x - 
				\frac {p _ n}{q _ n}
				} <
				\frac 1 {q _ n q _ {n + 1}}
				\quad
				\text{ 
					for every 
				}
				n \geq 0.
			\]
		\end{Prop}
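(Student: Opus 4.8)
The plan is to treat this as the standard theory of regular continued fractions, setting up the fundamental recurrences together with the associated determinant identity, and then reading off both coprimality and the approximation bound from that identity. Concretely, I would define the integers $p_n, q_n$ directly by the recurrences $p_n = a_n p_{n-1} + p_{n-2}$ and $q_n = a_n q_{n-1} + q_{n-2}$ with the initial data $p_{-1} = 1$, $q_{-1} = 0$, $p_0 = a_0$, $q_0 = 1$, and verify by induction on $n$ that they compute the convergents. The cleanest formulation is the slightly stronger statement that for every real $t > 0$ one has
\[
    [a_0; a_1, \ldots, a_{n-1}, t] = \frac{t\, p_{n-1} + p_{n-2}}{t\, q_{n-1} + q_{n-2}};
\]
specializing $t = a_n$ recovers $[a_0; \ldots, a_n] = p_n / q_n$ via the recurrence, and the same identity with $t$ equal to a complete quotient is reused below.

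Next I would establish the determinant identity $p_n q_{n-1} - p_{n-1} q_n = (-1)^{n-1}$ for every $n \geq 0$ by induction: the base cases $n = 0, 1$ follow from the initial data, and the inductive step is an immediate substitution of the two recurrences, the $a_n$-terms cancelling and producing the sign flip. Coprimality of $p_n$ and $q_n$ is then automatic, since any common divisor of $p_n$ and $q_n$ must divide $p_n q_{n-1} - p_{n-1} q_n = \pm 1$.

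For the approximation estimate I would use the complete quotients $\alpha_{n+1} = 1 / \xi_n = a_{n+1} + \xi_{n+1}$, which by construction satisfy $x = [a_0; a_1, \ldots, a_n, \alpha_{n+1}]$ (a short induction against the relations $1/\xi_k = a_{k+1} + \xi_{k+1}$). Applying the stronger identity above with the final partial quotient replaced by $\alpha_{n+1}$ gives $x = \dfrac{\alpha_{n+1} p_n + p_{n-1}}{\alpha_{n+1} q_n + q_{n-1}}$, whence, using the determinant identity,
\[
    x - \frac{p_n}{q_n} = \frac{q_n p_{n-1} - p_n q_{n-1}}{q_n(\alpha_{n+1} q_n + q_{n-1})} = \frac{(-1)^n}{q_n(\alpha_{n+1} q_n + q_{n-1})}.
\]
Since $x$ is irrational we have $\xi_{n+1} > 0$, so $\alpha_{n+1} > a_{n+1}$ and therefore $\alpha_{n+1} q_n + q_{n-1} > a_{n+1} q_n + q_{n-1} = q_{n+1}$; taking absolute values yields exactly $\abs{x - p_n/q_n} < 1/(q_n q_{n+1})$. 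Finally, convergence of $p_n/q_n$ to $x$ follows because $q_{n+1} = a_{n+1} q_n + q_{n-1} \geq q_n + q_{n-1}$ forces $q_n \to + \infty$, so the bound tends to $0$.

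The only genuinely delicate point is the book-keeping in the complete-quotient representation: one must check that $x = [a_0; \ldots, a_n, \alpha_{n+1}]$ really holds under the paper's indexing of $(a_k)$ and $(\xi_k)$, and that it is the strict inequality $\alpha_{n+1} > a_{n+1}$ (equivalently $\xi_{n+1} > 0$) that upgrades the estimate to a strict bound. Once this representation is in place, everything else reduces mechanically to the determinant identity, so I expect no further difficulty. (In the present context this is a classical result, so it would also be legitimate to simply invoke \cite{bug04}; the sketch above indicates how one recovers it from scratch.)
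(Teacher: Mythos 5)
Your proof is correct: the recurrences, the determinant identity $p_n q_{n-1} - p_{n-1} q_n = (-1)^{n-1}$, coprimality as an immediate consequence, and the approximation bound via the complete quotient $\alpha_{n+1} = a_{n+1} + \xi_{n+1} > a_{n+1}$ are all accurately carried out, including the strict inequality. The paper itself gives no proof of this proposition --- it is quoted verbatim from Bugeaud's book --- and your argument is exactly the classical one found in that reference, so there is nothing further to compare.
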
 
	
		We are now able to prove the following theorem, which gives a sufficient condition on the convergents of an irrational number $\alpha$ in $[0, 1]$ implying the cyclicity of $T _ \alpha.$
		We recall that the set $\text{Cycl} _ {\mathbf Q \cap (0, 1)}$ has been defined in Definition \ref{common} as the set of common cyclic vectors for all $T _ \alpha$, $\alpha \in \mathbf Q \cap (0, 1).$
		
		\begin{Thm}\label{psi}
			Let $f \in \text{Cycl} _ {\mathbf Q \cap (0, 1)}$.
			There exists a function $\psi _ f \colon \mathbf N \to \mathbf R _ +$ with the following property:
			if $(p _ n / q _ n) _ {n \geq 0}$ are the convergents of an irrational number $\alpha$ in $[0, 1]$ and if 
			for every $n \geq 0$ there exists $n _ 0 \geq n$ such that $q _ {n _ 0 + 1} > \psi _ f(q _ {n _ 0})$, then $f$ is cyclic for $T _ \alpha$.
		\end{Thm}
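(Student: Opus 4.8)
The plan is to show directly that $\overline{\text{span}[T_\alpha^n f ; n \ge 0]} = L^p([0,1])$, by approximating a fixed countable dense set. The underlying idea is that, since $f \in \text{Cycl}_{\mathbf Q \cap (0,1)}$, the function $f$ is \emph{already} cyclic for every rational convergent $p_{n_0}/q_{n_0}$ of $\alpha$; and by Lemma \ref{continuous} the map $\alpha \mapsto Q(T_\alpha) f$ is continuous for each fixed $Q \in \mathbf C[\xi]$, so any polynomial approximation achieved with $T_{p_{n_0}/q_{n_0}}$ can be transported to $T_\alpha$ as soon as $\alpha$ is close enough to $p_{n_0}/q_{n_0}$, i.e. as soon as $q_{n_0+1}$ is large (recall $|\alpha - p_{n_0}/q_{n_0}| < 1/(q_{n_0} q_{n_0+1})$). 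The whole content of the statement is that the required closeness can be guaranteed knowing only $q_{n_0}$, which is exactly what the function $\psi_f$ will encode.

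I would construct $\psi_f$ as follows. Fix once and for all a dense sequence $(g_j)_{j \ge 1}$ in $L^p([0,1])$. For each integer $q \ge 2$, each $r \in \{1, \ldots, q-1\}$ coprime to $q$, and each index $j \le q$, the cyclicity of $f$ for $T_{r/q}$ provides a polynomial $P_{q,r,j} \in \mathbf C[\xi]$ with $\| P_{q,r,j}(T_{r/q}) f - g_j \|_p < 1/q$. By Lemma \ref{continuous} the map $\alpha \mapsto P_{q,r,j}(T_\alpha) f$ is continuous, so there is $\delta_{q,r,j} > 0$ such that $\| P_{q,r,j}(T_\alpha) f - P_{q,r,j}(T_{r/q}) f \|_p < 1/q$ whenever $|\alpha - r/q| < \delta_{q,r,j}$. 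I then set $\delta(q) = \min \delta_{q,r,j}$, the minimum being taken over the \emph{finitely many} admissible pairs $(r,j)$ and hence strictly positive, and I define $\psi_f(q)$ to be any value large enough that $1/(q\, \psi_f(q)) < \delta(q)$.

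To conclude, suppose the convergents of $\alpha$ satisfy the gap hypothesis; then there are infinitely many indices $n_0$ with $q_{n_0+1} > \psi_f(q_{n_0})$, and for each such $n_0$ one has $|\alpha - p_{n_0}/q_{n_0}| < 1/(q_{n_0} q_{n_0+1}) < 1/(q_{n_0}\,\psi_f(q_{n_0})) < \delta(q_{n_0})$. Fixing any $j$ and choosing among these infinitely many $n_0$ one with $q_{n_0} \ge j$ (possible since $(q_n)$ is strictly increasing), the triangle inequality gives $\| P_{q_{n_0}, p_{n_0}, j}(T_\alpha) f - g_j \|_p < 2/q_{n_0}$, and letting $n_0 \to \infty$ along this set forces $q_{n_0} \to \infty$, so $g_j \in \overline{\text{span}[T_\alpha^n f ; n \ge 0]}$. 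As this holds for every $j$ and $(g_j)$ is dense, the orbit span is dense and $f$ is cyclic for $T_\alpha$. The delicate point, and the reason the hypothesis is phrased through $\psi_f$ rather than a single modulus of continuity, is that $\psi_f$ must be produced \emph{before} $\alpha$ is revealed: this is made possible by the finiteness of the set of numerators $r$ coprime to a fixed $q$ (so that $\delta(q)$ is a minimum of finitely many positive numbers) together with the truncation $j \le q$ of the target list (without which the minimum could degenerate to $0$), the growth $q_{n_0} \to \infty$ then ensuring that every $g_j$ is eventually reached.
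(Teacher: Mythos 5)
Your proposal is correct and follows essentially the same route as the paper's proof: truncate the dense target list to $j \le q$, use finiteness of the admissible numerators to get a single $\delta(q) > 0$ via Lemma \ref{continuous}, set $\psi_f(q) \approx 1/(q\,\delta(q))$, and combine the convergent estimate $\abs{\alpha - p_{n_0}/q_{n_0}} < 1/(q_{n_0} q_{n_0+1})$ with the triangle inequality. The only differences are cosmetic (error tolerance $1/q$ versus the paper's $2^{-q}$, and your concluding step fixes $g_j$ and lets $q_{n_0} \to \infty$, which is if anything slightly cleaner).
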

	
		\begin{proof}
			Let $(g _ j) _ {j \geq 1}$ be a dense set of functions in $L ^ p([0, 1]).$
			Let $q \geq 2$ be an integer.
			We consider the sets 
			\begin{align*}
				R _ q =
				\left\{
					\frac r q \ ;
					0 < r < q \text{ and }
					gcd(r, q) = 1
				\right\}
				\subset [0, 1]
				\text{ and }
				G _ q = 
				\{
					g _ 1, \ldots, g _ q
				\}
				\subset
				L ^ p([0, 1]).
			\end{align*}
			Since $T _ {r / q}$ is cyclic for every $r / q \in R _ q$,  for every $j \in \{1, \ldots, q\}$ there exists $Q _ {r / q, j} \in \mathbf C [\xi]$ such that
			$
				\| Q _ {r / q, j} (T _ {r / q}) f - g _ j \| _ p
				<
				1 /{2 ^ q}.
			$
		Then the finite set 
		\[
			\mathcal P _ q =
			\left\{
				Q _ {r / q, j} ;
				\left(
					r / q,
					j
				\right)
				\in R _ q \times \{1, \ldots, q\}
			\right\}
		\]
		is such that for every $j \in \{1 , \ldots, q\}$ and every $p / q \in R _ q$, there exists $Q \in \mathcal P _ q$ such that 
		$
			\|
				Q (T _ {r / q}) f - g _ j
			\| _ p
			< 
			1 / 2 ^ q.
		$
		Since the maps $\phi _ {Q, f} \colon \alpha \mapsto Q(T _ \alpha) f$ are continuous for every polynomial $Q$ in $\mathcal P _ q$ by Lemma \ref{continuous}, there exists $\delta(q) > 0$ such that for every $Q \in \mathcal P _ q$ and every $r / q \in R _ q$, 
		$
			\|
				Q(T _ \beta)f - Q(T _ {r / q})f
			\| _ p
			< 1 / 2 ^ q
		$ 
		whenever $\beta \in [0, 1]$ satisfies 
		$
			\abs{
				\beta - r / q
			} < \delta(q).
		$
		
		Let us consider the function $\psi _ f \colon \mathbf N \to \mathbf R _ +$ defined by $\psi _ f(q) = 1 / (q \delta(q))$ 
		 and an irrational number $\alpha \in (0, 1)$ whose convergents $(p _ n / q _ n) _ {n \geq 0}$ are such that
		for every $n \geq 0$, there exists $n _ 0 \geq n$ such that $q _ {n _ 0 + 1} > \psi _ f(q _ {n _ 0})$.
		Since $(q _ n) _ {n \geq 0}$ is an increasing sequence of positive integers, $q _ n \geq n$ for every $n \geq 0$.
		Fix $n \geq 0$ and $n _ 0 \geq n$ such that $q _ {n _ 0 + 1} > \psi _ f(q _ {n _ 0})$.
		Then $g _ n \in G _ {q _ n} \subset G _ {q _ {n _ 0}}$ and $p _ {n _ 0} / q _ {n _ 0} \in R _ {q _ {n _ 0}}$, so there exists $Q _ {n _ 0} \in \mathcal P _ {q _ {n _ 0}}$ such that 
		$
			\|
				Q _ {n _ 0}(T _ {p _ {n _ 0} / q _ {n _ 0}}) f - g _ n
			\| _ p 
			< 2 ^ {-q _ {n _ 0}}.
		$
		Since 
		$
			\abs{
				\alpha - p _ {n _ 0} / q _ {n _ 0}
			} 
			< 1 / (q _ {n _ 0} q _ {n _ 0 + 1})
			< \delta(q _ {n _ 0}),
		$			
		we have 
		$
			\|
				Q _ {n _ 0}(T _ \alpha) f - Q _ {n _ 0} (T _ {p _ {n _ 0} / q _ {n _ 0}}) f
			\| _ p 
			< 2 ^ {- q _ {n _ 0}}.
		$
		So 
		\begin{align*}
			\|
				Q _ {n _ 0}(T _ \alpha)f - g _ n
			\| _ p 
			& \leq
			\|
				Q _ {n _ 0}(T _ \alpha)f - Q _ {n _ 0}(T _ {p _ {n _ 0} / q _ {n _ 0}}) f
			\| _ p
			+
			\|
				Q _ {n _ 0}(T _ {p _ {n _ 0} / q _ {n _ 0}})f - g _ n
			\| _ p \\
			& <
			2 ^ {- q _ {n _ 0}} + 2 ^ {- q _ {n _ 0}} 
			\leq
			2 ^ {- (q _ {n _ 0} - 1)}
			\leq 2 ^ {-(n _ 0 - 1)}
			\leq 2 ^ {-(n - 1)}.
		\end{align*}
		Hence $f$ is cyclic for $T _ \alpha$.
		\end{proof}
	
		\begin{Rmq}\label{rmkcomeager}
			As in Theorem \ref{comeager}, Theorem \ref{psi} implies that the set of parameters $\alpha$ such that $T _ \alpha$ is cyclic is a co-meager set in $[0, 1]$. Indeed, if $q _ k (\alpha)$ denotes the denominator of the $k$-th convergent of $\alpha$, then for every
			$
				f \in 
				\text{Cycl} _ {\mathbf Q \cap (0, 1)}
			$ 
			the set
			\[
				\bigcap _ {n \geq 0}
					\bigcup _ {n _ 0 \geq n}
						\{
							\alpha \in \mathbf R \backslash \mathbf Q \cap (0, 1) ;
							q _ {n _ 0 + 1}(\alpha) > \psi _ {f}(q _ {n _ 0}(\alpha))
						\}
			\]
			is a $G _ \delta$-set in $[0, 1]$.
		\end{Rmq}
	
		\begin{Rmq}
			The proof of the previous theorem does not require the function $f$ to be cyclic for $T _ \alpha$ for \textit{every} rational number $\alpha$, but only for $\alpha \in \mathbf Q \cap (0, 1) \backslash F$, where $F$ is a finite set.
		\end{Rmq}
	
		We now move over to the study of cyclicity properties of weighted translation operators.
		
	\subsection{Cyclicity of weighted translation operators}
	
		Let $\phi$ be a function in $L ^ \infty([0, 1]).$
		Again, we observe that $T _ {\phi, \alpha}$ cannot be cyclic in the case where
		$
			m(\{
				\phi = 0
			\}) > 0.
		$
				
		\begin{Prop}
			Let $\phi \in L ^ \infty([0, 1])$ satisfying
			$
				m(\{
					\phi = 0
				\}) > 0.
			$
			For every $\alpha \in [0, 1]$, the operator $T _ {\phi, \alpha}$ is not cyclic.
		\end{Prop}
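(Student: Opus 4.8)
The plan is to exploit the fact that the weight $\phi$ annihilates every positive power of $T_{\phi,\alpha}$ on the set $E := \{\phi = 0\}$, so that the cyclic subspace generated by any vector becomes essentially one-dimensional once restricted to $E$.

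First I would recall that for every $f \in L^p([0,1])$ and every $n \geq 1$ one has
\[
	T_{\phi,\alpha}^n f(x) = \phi(x)\phi(\{x+\alpha\}) \cdots \phi(\{x+(n-1)\alpha\}) f(\{x+n\alpha\})
	\quad \text{a.e. on } [0,1],
\]
so that $T_{\phi,\alpha}^n f = 0$ a.e.\ on $E$ because the leading factor $\phi(x)$ already vanishes there. Consequently, for every polynomial $P(\xi) = \sum_{k=0}^N c_k \xi^k \in \mathbf{C}[\xi]$, the vector $P(T_{\phi,\alpha})f = \sum_{k=0}^N c_k T_{\phi,\alpha}^k f$ satisfies $\mathbf 1_E P(T_{\phi,\alpha}) f = c_0 \, \mathbf 1_E f = P(0)\, \mathbf 1_E f$ a.e.\ on $[0,1]$. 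In other words, the restriction to $E$ of every element of $\mathrm{span}[T_{\phi,\alpha}^n f ; n \geq 0]$ lies in the one-dimensional subspace $\mathbf{C}\, \mathbf 1_E f$ of $L^p([0,1])$.

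Next I would fix an arbitrary $f \in L^p([0,1])$ and produce a target outside the closure of its orbit subspace. Since $m(E) > 0$, there exist infinitely many linearly independent functions in $L^p([0,1])$ supported on $E$, so one can choose $h \in L^p([0,1])$ with $h = \mathbf 1_E h$ and $h \notin \mathbf{C}\, \mathbf 1_E f$. As $\mathbf{C}\, \mathbf 1_E f$ is a finite-dimensional, hence closed, subspace, the distance $d := \inf_{\lambda \in \mathbf{C}} \|\lambda\, \mathbf 1_E f - h\|_p$ is strictly positive. Then for every $P \in \mathbf{C}[\xi]$,
\[
	\|P(T_{\phi,\alpha})f - h\|_p \geq \|\mathbf 1_E(P(T_{\phi,\alpha})f - h)\|_p = \|P(0)\, \mathbf 1_E f - h\|_p \geq d > 0,
\]
so $h$ is at distance at least $d$ from the cyclic subspace. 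Hence no $f \in L^p([0,1])$ is cyclic, and $T_{\phi,\alpha}$ is not cyclic.

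I do not anticipate any serious difficulty here: the argument is elementary once the decisive observation is isolated, namely that on $E$ only the constant term $P(0)$ of the polynomial $P$ survives. The single point deserving a moment's care is the choice of $h$ together with the strict positivity of $d$, which both rest on the fact that $m(E) > 0$ makes the space of functions supported on $E$ infinite-dimensional, while $\mathbf{C}\, \mathbf 1_E f$ is merely a closed line.
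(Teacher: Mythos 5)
Your proof is correct and follows essentially the same route as the paper: both rest on the observation that on $\{\phi = 0\}$ every term $T_{\phi,\alpha}^k f$ with $k \geq 1$ vanishes, so $P(T_{\phi,\alpha})f$ reduces to $P(0)f$ there, forcing the restriction of the cyclic subspace to a set of positive measure into a one-dimensional space, which cannot be dense. Your version merely spells out the final impossibility (choice of $h$ and the positive distance to the closed line $\mathbf{C}\,\mathbf{1}_E f$) that the paper leaves implicit.
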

		\begin{proof}
			Let 
			$
				A =
				\{
					\phi = 0
				\}
			.$
			Then for every $f \in L ^ p([0, 1])$ and every 
			$
				P = 
				\sum _ {0 \leq k \leq d}
					a _ k \xi ^ k
				\in
				\mathbf C[\xi]
			$ 
			\[
				P(T _ {\phi, \alpha})f(x) =
				\sum _ {k = 0} ^ d
					a _ k \phi(x) \ldots \phi(\{x + (k - 1) \alpha\}) f(\{x + k \alpha\}) =
				a _ 0 f(x)
				\quad
				\text{a.e. on }
				A.
			\]
			Suppose that $f$ is cyclic for $T _ {\phi, \alpha}.$
			Then the set 
			$
				\{
					P(T _ {\phi, \alpha}) f _ {| A} ;
					P \in \mathbf C[\xi]
				\} =
				\text{span}[f _ {| A}]
			$
			is dense in $L ^ p(A)$, which is impossible.
		\end{proof}
	
		Our aim is now to extend the result of Chalendar and Partington characterizing cyclic functions for rational Bishop operators to the case of weighted translation operators $T _ {\phi, \alpha}$, with $\alpha \in \mathbf Q$ and an increasing function $\phi \in \mathcal C([0, 1], \mathbf R)$.
				
		\begin{Def}
			Let $\phi \in L ^ \infty ([0, 1])$, let $f \in L ^ p([0, 1])$ and let $\alpha = r / q$ be a rational number such that $0 < p < q$ and that $r$ and $q$ are coprime.
			We define $\Delta _ \phi(f, r / q)(t)$ a.e. on $[0, 1]$ by
			\[
			\begin{vmatrix}
				f(t) & T_ {\phi, r / q} f(t) & \cdots & T _ {\phi, r / q} ^ {q - 1} f(t) \\
				f(\{t + r / q\}) & T _ {\phi, r / q} f(\{t + r / q\}) & \cdots & T _ {\phi, r / q} ^ {q - 1} f(\{t + r / q\}) \\
				\vdots & \vdots & & \vdots \\
				f(\{t + (q - 1) r / q\}) & T _ {\phi, r / q} f (\{t + (q - 1) r / q\}) & \cdots & T _ {\phi, r / q} ^ {q - 1} f(\{t + (q - 1) r / q\})
			\end{vmatrix}.
			\]
		\end{Def}
	
		We will prove the following theorem:
		
		\begin{Thm}\label{cyclicitywt}
			Let $\alpha = r / q$ be a rational number in $(0, 1)$ such that $0 < r < q$ and $r$ and $q$ are coprime.
			Let $\phi \in \mathcal C([0, 1], \mathbf R)$ be an increasing function.
			A function $f \in L ^ p([0, 1])$ is cyclic for $T _ {\phi, \alpha}$ if and only if the function $\Delta _ \phi(f, r / q)$ satisfies 
			$
				m(\{
					t \in [0, 1] ;
					\Delta _ \phi(f, r / q)(t) = 0
				\}) = 0.
			$
		\end{Thm}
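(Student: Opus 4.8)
The plan is to reduce the cyclicity question to a fiberwise problem in linear algebra, exactly mirroring the unweighted case of Theorem \ref{cyclicrat}. Since $\gcd(r,q)=1$, for each $t\in[0,1/q)$ the $q$ points $\{t+kr/q\}$, $0\le k\le q-1$, are distinct and meet each interval $[j/q,(j+1)/q)$ exactly once, so
\[
	\Phi\colon f\longmapsto\big(t\mapsto(f(\{t+kr/q\}))_{0\le k\le q-1}\big)
\]
is an isometric isomorphism of $L^p([0,1])$ onto $L^p([0,1/q);\mathbf C^q)$ (with the $\ell^p$ norm on $\mathbf C^q$). Using $\{\{t+kr/q\}+r/q\}=\{t+(k+1)r/q\}$ and $\{t+qr/q\}=t$, a direct computation shows that $\Phi$ intertwines $T_{\phi,r/q}$ with the fiberwise operator $(\mathcal T F)(t)=C_\phi(t)F(t)$, where $C_\phi(t)$ is the weighted cyclic shift on $\mathbf C^q$ with weights $\phi_k(t)=\phi(\{t+kr/q\})$. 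Writing $v(t)=\Phi(f)(t)$, the $(i,j)$ entry of the matrix defining $\Delta_\phi(f,r/q)(t)$ is precisely $(C_\phi(t)^j v(t))_i$, so $\Delta_\phi(f,r/q)(t)=\det\big[\,v(t)\mid C_\phi(t)v(t)\mid\cdots\mid C_\phi(t)^{q-1}v(t)\,\big]$, which is nonzero if and only if $v(t)$ is a cyclic vector for the matrix $C_\phi(t)$. As $\Delta_\phi(f,r/q)$ is $1/q$-periodic up to sign, the theorem amounts to the equivalence: $f$ is cyclic for $T_{\phi,r/q}$ if and only if $v(t)$ is cyclic for $C_\phi(t)$ for almost every $t\in[0,1/q)$.

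For the necessity direction I would argue by contraposition. If $m(\{\Delta_\phi(f,r/q)=0\})>0$, then on a set $E\subset[0,1/q)$ of positive measure the vector $v(t)$ lies in the proper $C_\phi(t)$-invariant subspace $V(t)=\text{span}[C_\phi(t)^j v(t);j\ge0]$. For every polynomial $P$ one has $\Phi(P(T_{\phi,r/q})f)(t)=P(C_\phi(t))v(t)\in V(t)$ on $E$, and the set $\{G\in L^p;G(t)\in V(t)\text{ for a.e. }t\in E\}$ is a closed (pass to an a.e.-convergent subsequence) proper subspace of $L^p$. Hence the closed span of the orbit of $f$ is proper and $f$ is not cyclic.

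The sufficiency direction is the main obstacle. Here I would exploit the identity $C_\phi(t)^q=w(t)I$, where, since $\{\{t+kr/q\};0\le k\le q-1\}=\{t+j/q;0\le j\le q-1\}$, the scalar $w(t)=\prod_{k=0}^{q-1}\phi(\{t+kr/q\})=\prod_{j=0}^{q-1}\phi(t+j/q)$ is continuous and strictly increasing on $[0,1/q)$, as established for rational parameters in the proof of Theorem \ref{wtspectrum}. This shows that $T_{\phi,r/q}^q=M_w$ and that every $P(T_{\phi,r/q})$ acts fiberwise as $\sum_{j=0}^{q-1}B_j(w(t))\,C_\phi(t)^j$, where $B_0,\dots,B_{q-1}$ are polynomials that may be prescribed independently (they collect the coefficients of $P$ in the distinct residue classes modulo $q$). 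Assuming $\Delta_\phi(f,r/q)\neq0$ a.e., Cramer's rule provides, for a target $g$, measurable coefficients $c_j(t)$ with $\sum_{j=0}^{q-1}c_j(t)C_\phi(t)^j v(t)=\Phi(g)(t)$; it then suffices to approximate each $c_j$ by some $B_j(w)$. Since $w$ is continuous and injective on $[0,1/q)$, Stone--Weierstrass makes polynomials in $w$ dense in $C([0,1/q])$, hence in $L^p([0,1/q))$, so such approximants exist.

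The delicate point, and where I expect the real work to lie, is that the Cramer coefficients $c_j(t)$ and the vectors $C_\phi(t)^j v(t)$ need not be bounded: the $c_j(t)$ blow up where $\Delta_\phi(f,r/q)$ is small. I would therefore first reduce to a dense family of targets $g$ whose coefficients $c_j$ are \emph{continuous and compactly supported} inside $\{|\Delta_\phi(f,r/q)|>0\}$: on each set where $|\Delta_\phi(f,r/q)|\ge\delta$ and $\|v\|\le N$ the $c_j$ are bounded, so by Lusin's theorem they can be replaced by continuous compactly supported $d_j$ at the cost of an arbitrarily small $L^p$-error, these sets exhausting $[0,1/q)$ as $\delta\to0$, $N\to\infty$. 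For such a target, approximating each $d_j$ \emph{uniformly} by a polynomial $B_j(w)$ and assembling the $B_j$ into a single $P$ gives the fiberwise error $\sum_j(B_j(w)-d_j)C_\phi(t)^j v(t)$, whose $L^p$-norm is bounded by $\sum_j\|B_j(w)-d_j\|_\infty\,\|\phi\|_\infty^{\,j}\,\|f\|_p$ and is thus arbitrarily small. Since $g$ was arbitrary, $\text{span}[T_{\phi,r/q}^n f;n\ge0]$ is dense and $f$ is cyclic, which completes the proof.
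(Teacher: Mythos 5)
Your proof is correct, and underneath the fiberwise packaging it follows essentially the same strategy as the paper's: both arguments rest on the identity $T_{\phi,r/q}^q = M_w$ with $w$ being $1/q$-periodic, on the resulting decomposition $P(T_{\phi,r/q}) = \sum_{j=0}^{q-1} B_j(w)\, T_{\phi,r/q}^j$ with independently prescribable polynomials $B_j$, on Cramer's rule with a cutoff where the determinant is small, on Stone--Weierstrass applied to the algebra generated by $w$, and, for necessity, on the observation that a fiberwise linear dependence confines the whole orbit to a proper closed subspace (your section space $\{G \,;\, G(t)\in V(t) \text{ a.e. on } E\}$ is exactly the paper's relations $\sum_j a_j(t)\varphi(\{t+jr/q\})=0$ in different clothing). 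The one substantive divergence is in how the sufficiency approximation is executed: the paper isolates two lemmas --- Lemma \ref{decompositionwt}, which produces bounded $1/q$-periodic Cramer coefficients for the dense family of targets vanishing on $\Omega_{n,f,\phi}$, and Lemma \ref{density1/q}, which establishes $L^p$-density of $\{Q(w)F \,;\, Q\in\mathbf C[\xi]\}$ by a duality (annihilator) argument --- whereas you truncate to the sets $\{\abs{\Delta_\phi(f,r/q)}\geq\delta\}\cap\{\|v\|\leq N\}$, invoke Lusin's theorem to make the coefficients continuous, and then approximate them uniformly by polynomials in $w$. The paper's duality route handles merely bounded measurable coefficients directly, so it never needs Lusin; your route yields the cleaner final estimate $\sum_j\|B_j(w)-d_j\|_\infty\|\phi\|_\infty^j\|f\|_p$ and makes the linear-algebraic content (cyclicity of $v(t)$ for the weighted cyclic shift $C_\phi(t)$) completely transparent. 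One caveat, which you share with the paper rather than introduce: both proofs ultimately need $w$ to be injective on $[0,1/q]$ so that polynomials in $w$ separate points; you cite the proof of Theorem \ref{wtspectrum}, which obtains monotonicity of $w$ under the stronger hypotheses $\phi(0)=0$ and $\phi$ convex (hence $\phi\geq 0$), while the present theorem assumes only that $\phi$ is increasing --- but this is precisely the step where the paper's Lemma \ref{density1/q} likewise appeals to monotonicity of $\phi$ alone, so no gap arises relative to the paper's own argument.
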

	
		The proof of Theorem \ref{cyclicitywt} follows the lines of the proof of Theorem 5.4.4 in \cite{cha11}.
		The first step in the proof of Theorem \ref{cyclicitywt} is the following lemma:
	
		\begin{Lem}\label{decompositionwt}
			Let $\phi \in \mathcal C([0, 1], \mathbf R)$ be an increasing function and let $\alpha = r / q$ be a rational number such that $0 < r < q$ and that $r$ and $q$ are coprime.
			Let $f \in L ^ p ([0, 1])$, let $n \geq 1$ and let $h \in L ^ \infty([0, 1])$ vanishing on the set
			\begin{align*}
				\Omega _ {n, f, \phi} =
				&
				\{
					t \in [0, 1] ;
					\abs{
						\Delta _ \phi(f, r / q)(t) 
					} < 1 / n
				\} \cup \\
				& 
				\bigcup _ {
					0 \leq i, j \leq q - 1
				}
					\{
						t \in [0, 1] ;
						\abs{
							T _ {\phi, r / q} ^ j f(\{t + i r / q\})
						} > n
					\}.
			\end{align*}
			There exist $1 / q$-periodic functions $h _ 0, \ldots, h _ {q - 1} \in L ^ \infty([0, 1])$ such that 
			$
				h = \sum _ {j = 0} ^ {q - 1} h _ j T _ {\phi, r / q} ^ j f.
			$
		\end{Lem}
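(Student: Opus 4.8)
The plan is to reduce the existence of the decomposition to the pointwise solvability of a $q \times q$ linear system along each orbit of the rotation $R _ {r/q} \colon x \mapsto \{x + r/q\}$, solved by Cramer's rule. Since $\gcd(r, q) = 1$, for every $t$ the orbit $\{\{t + i r/q\} ; 0 \le i \le q - 1\}$ coincides with $\{\{t + k/q\} ; 0 \le k \le q - 1\}$, so any $1/q$-periodic function is constant along such an orbit. Writing $M(t) = (T _ {\phi, r/q}^j f(\{t + i r/q\})) _ {0 \le i, j \le q - 1}$ and $b(t) = (h(\{t + i r/q\})) _ {0 \le i \le q - 1}$, the sought identity $h = \sum _ {j = 0} ^ {q - 1} h _ j T _ {\phi, r/q}^j f$ with $1/q$-periodic coefficients $h _ j$ is equivalent, for a.e. $t$, to the vector equation $M(t)\, x(t) = b(t)$ with $x _ j(t) := h _ j(t)$: the $i$-th scalar equation is exactly the desired identity read at the orbit point $\{t + i r/q\}$, once one uses $h _ j(\{t + i r/q\}) = h _ j(t)$. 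Note that $\det M(t) = \Delta _ \phi(f, r/q)(t)$.

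First I would record that $\Omega _ {n, f, \phi}$ is $1/q$-periodic, hence a union of full orbits: the set $\{\abs{\Delta _ \phi(f, r/q)} < 1/n\}$ is $1/q$-periodic because $\abs{\Delta _ \phi(f, r/q)}$ is (the rotation merely permutes the rows of $M$), while the union over $i$ of the conditions $\abs{T _ {\phi, r/q}^j f(\{t + i r/q\})} > n$ is manifestly invariant under $t \mapsto \{t + 1/q\}$. I would then set $h _ j := 0$ on $\Omega _ {n, f, \phi}$ and, off $\Omega _ {n, f, \phi}$ (where $\abs{\Delta _ \phi(f, r/q)} \ge 1/n > 0$), define $h _ j(t) = \det M _ j(t) / \Delta _ \phi(f, r/q)(t)$ by Cramer's rule, $M _ j(t)$ being $M(t)$ with its $j$-th column replaced by $b(t)$. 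These functions are measurable, being algebraic combinations of $f$, $\phi$ and the measure-preserving rotation.

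The $1/q$-periodicity of each $h _ j$ is the first point to check carefully, and I expect it to be the main technical obstacle together with the boundedness. On $\Omega _ {n, f, \phi}$, $h _ j = 0$ is periodic since the set is a union of orbits; off $\Omega _ {n, f, \phi}$, the shift $t \mapsto \{t + 1/q\}$ sends the orbit point $\{t + i r/q\}$ to $\{t + (i + s) r/q\}$ with $s \equiv r ^ {-1} \pmod q$, so it applies \emph{the same} cyclic row-permutation to $M(t)$ and to $b(t)$; as permuting the rows of an invertible system leaves its solution unchanged, $h _ j(\{t + 1/q\}) = h _ j(t)$. For boundedness I would use the second part of $\Omega _ {n, f, \phi}$: off $\Omega _ {n, f, \phi}$ every entry of $M(t)$ satisfies $\abs{T _ {\phi, r/q}^j f(\{t + i r/q\})} \le n$ and every entry of $b(t)$ satisfies $\abs{h(\{t + i r/q\})} \le \|h\| _ \infty$, while $\abs{\Delta _ \phi(f, r/q)(t)} \ge 1/n$; a crude expansion of $\det M _ j(t)$ then gives $\|h _ j\| _ \infty \le q!\, n^q \|h\| _ \infty < + \infty$.

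Finally I would verify the reconstruction $h = \sum _ {j = 0} ^ {q - 1} h _ j T _ {\phi, r/q}^j f$ a.e. On $\Omega _ {n, f, \phi}$ both sides vanish, since $h$ vanishes there by hypothesis and each $h _ j$ was set to $0$. Off $\Omega _ {n, f, \phi}$, the $i$-th component of $M(t) x(t) = b(t)$ reads $\sum _ j T _ {\phi, r/q}^j f(\{t + i r/q\})\, h _ j(t) = h(\{t + i r/q\})$; replacing $h _ j(t)$ by $h _ j(\{t + i r/q\})$ through periodicity and setting $x = \{t + i r/q\}$ yields $\sum _ j h _ j(x) T _ {\phi, r/q}^j f(x) = h(x)$. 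As $i$ runs over $\{0, \dots, q - 1\}$ and $t$ over a fundamental domain, $x$ sweeps almost all of $[0, 1] \setminus \Omega _ {n, f, \phi}$, completing the argument. The conceptual crux is the reduction in the first paragraph—recognizing that demanding $1/q$-periodic coefficients turns the functional identity into an orbitwise linear system whose matrix is precisely the Wronskian-type determinant $\Delta _ \phi(f, r/q)$—while the delicate steps are the simultaneous row-permutation argument for periodicity and the role of the truncation set $\Omega _ {n, f, \phi}$ in securing the $L ^ \infty$ bounds.
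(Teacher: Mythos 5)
Your proof is correct and takes essentially the same approach as the paper's: both reduce the existence of $1/q$-periodic coefficients to the pointwise solvability of the $q\times q$ linear system whose matrix has determinant $\pm\Delta_\phi(f,r/q)(t)$, solve it off $\Omega_{n,f,\phi}$ where $\abs{\Delta_\phi(f,r/q)}\geq 1/n$, set the coefficients to zero on $\Omega_{n,f,\phi}$, and get boundedness from the definition of that set. The only differences are bookkeeping: the paper works on the fundamental domain $[0,1/q)$ with rows indexed by $t+k/q$ (so periodicity is automatic and the determinant is $\pm\Delta_\phi$), whereas you define the $h_j$ globally by Cramer's rule and then prove periodicity by a simultaneous row permutation, along the way making explicit two points the paper leaves implicit, namely the $1/q$-periodicity of $\Omega_{n,f,\phi}$ and the quantitative bound $q!\,n^q\|h\|_\infty$.
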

		\begin{proof}
			The proof of Lemma \ref{decompositionwt} is patterned after the proof of Lemma 5.4.2 in \cite{cha11}.
			A decomposition of $h$ of the form $h = h _ 0 f + \ldots + h _ {q - 1} f$ is by $1 / q$-periodicity equivalent to 
			\begin{align*}
				\accolade{r c c c c c c}{
					h(t) & = &
					h _ 0(t) f(t) & + & \ldots & + & h _ {q - 1}(t) T _ {\phi, r / q} ^ {q - 1} f (t)\\
					h(t + 1 / q) & = &
					h _ 0(t) f(t + 1 / q) & + & \ldots & + & h _ {q - 1}(t) T _ {\phi, r / q} ^ {q - 1} f (t) \\
					& \vdots & & & & & \\
					h(t + (q - 1) / q) 
					& = &
					h _ 0 (t) f(t + (q - 1) / q) & + & \ldots & + & h _ {q - 1} (t) T _ {\phi, r / q} ^ {q - 1} f (t + (q - 1) / q)
				}
			\end{align*}
			a.e. on $[0, 1 / q)$, that is to say to equivalent to the system
			\begin{align*}
				\begin{pmatrix}
					h(t) \\
					\vdots \\
					h(t + (q - 1) / q)
				\end{pmatrix}
				=
				\begin{pmatrix}
					f(t) & \ldots & T _ {\phi, r / q} ^ {q - 1} f(t) \\
					\vdots & & \vdots \\
					f(t + (q - 1) / q) & \ldots & T _ {\phi, r / q} ^ {q - 1} f(t + (q - 1) / q)
				\end{pmatrix}
				\begin{pmatrix}
					h _ 0(t) \\
					\vdots \\
					h _ {q - 1}(t)
				\end{pmatrix}
			\end{align*}
			whose determinant is $\pm \Delta _ \phi(f, r / q)(t)$ by permutation of the rows since $r$ and $q$ are coprime.
			On the one hand there exist such solutions if $t \in [0, 1] \backslash \Omega _ {n, f, \phi}$ because $\abs{\Delta _ \phi(f, r / q)(t)} \geq 1 / n$.
			On the other hand it suffices to set $h _ 0(t) = \ldots = h _ {q - 1}(t) = 0$ whenever $t \in \Omega _ {n, f, \phi}.$ 
			Such functions $h _ 0, \ldots, h _ {q - 1}$ are then bounded by the definition of the set $\Omega _ {n, f, \phi}.$
		\end{proof}
	
		We will also need the following density lemma, which is an analogue of Lemma 5.4.3 in \cite{cha11}.
		
		\begin{Lem}\label{density1/q}
			Let $\phi \in \mathcal C([0, 1], \mathbf R)$ be an increasing function and let $F$ be a function in $L ^ p([0, 1 / q])$ such that 
			$
				m(\{
					t \in [0, 1 / q] ;
					F(t) = 0
				\}) = 0.
			$ 
			The set $\{Q(w) F ; Q \in \mathbf C[\xi]\}$ is dense in $L ^ p([0, 1 / q])$ where $w$ is the function defined on $[0, 1 / q)$ by
			$
				w(x) = 
				\phi(x) \phi(x + 1/q) \ldots \phi(x + (q - 1) / q).
			$
		\end{Lem}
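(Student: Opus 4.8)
The plan is to argue by duality. Since $L ^ {p'}([0, 1/q])$ is the dual of $L ^ p([0, 1/q])$ (with $1 / p + 1 / p' = 1$), the set $\{Q(w) F ; Q \in \mathbf C[\xi]\}$ is dense in $L ^ p([0, 1/q])$ if and only if the only $g \in L ^ {p'}([0, 1/q])$ annihilating it is $g = 0$. So I would fix such a $g$ and set $h = Fg$, which belongs to $L ^ 1([0, 1/q])$ by Hölder's inequality. Taking $Q(\xi) = \xi ^ k$ in the annihilation condition yields
\[
	\int _ 0 ^ {1/q} w(t) ^ k h(t) \, dt = 0 \quad \text{for every } k \geq 0,
\]
that is, every moment of $h$ against the powers of $w$ vanishes. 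The goal is then to deduce $h = 0$ a.e., which will give $g = 0$ a.e. because $F \ne 0$ a.e. by hypothesis.

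The decisive point is that $w$ is a \emph{continuous and strictly increasing} function on $[0, 1/q)$. Continuity is clear since $\phi$ is continuous. For the monotonicity, each factor $x \mapsto \phi(x + j/q)$, $j \in \{0, \ldots, q - 1\}$, is increasing on $[0, 1/q)$ because $\phi$ is increasing; arguing as for the function $x \mapsto x \{x + 1/q\} \ldots \{x + (q - 1)/q\}$ in the proof of Theorem \ref{noeigenvalue} (and as for the weight $w$ in the proof of Theorem \ref{wtspectrum}), the product of these factors is then strictly increasing. I expect this step to be the main obstacle: if $\phi$ were merely non-decreasing with a flat piece, or if $\phi$ changed sign, then $w$ could be constant on a subinterval or fail to be monotone, and in either case one could produce a nonzero $g$ in the annihilator supported on the corresponding level set. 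So the monotonicity (and the sign) of $\phi$ must genuinely be used here, and verifying that the product $w$ inherits strict monotonicity from $\phi$ is the crux.

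Granting that $w$ is a strictly increasing continuous map, it is a homeomorphism of $[0, 1/q)$ onto a bounded interval whose closure $J$ is compact. I would then push forward: define the finite complex Borel measure $\mu$ on $J$ by $\mu(B) = \int _ {w ^ {-1}(B)} h \, dm$ for every Borel set $B \subset J$. The moment condition rewrites as $\int _ J s ^ k \, d\mu(s) = 0$ for every $k \geq 0$, so $\int _ J \psi \, d\mu = 0$ for every $\psi \in \mathcal C(J)$ by the Weierstrass approximation theorem, and hence $\mu = 0$ by the Riesz representation theorem. Since $w$ is injective, $\int _ A h \, dm = \mu(w(A)) = 0$ for every Borel set $A \subset [0, 1/q)$, which forces $h = 0$ a.e. As $F \ne 0$ a.e., this yields $g = 0$ a.e., proving that the annihilator is trivial and therefore that $\{Q(w) F ; Q \in \mathbf C[\xi]\}$ is dense in $L ^ p([0, 1/q])$.
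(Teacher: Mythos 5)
Your argument is correct (modulo the point you yourself flag, on which see below), and its skeleton is the same as the paper's: pass to the annihilator in $L^{p'}([0,1/q])$ by duality, and use polynomial approximation together with injectivity of $w$ to force the annihilating function to vanish. The difference is only in the packaging of the approximation step. The paper extends $w$ to a continuous function $w_0$ on $[0,1/q]$ and applies Stone--Weierstrass to the algebra $\{Q(w_0)\ ;\ Q \in \mathbf C[\xi]\}$ in $\mathcal C([0,1/q])$ --- it separates points (this is where injectivity enters), contains the constants and is stable under conjugation because $w_0$ is real-valued --- and concludes that the $L^1$ function $F\overline{G}$ annihilates all of $\mathcal C([0,1/q])$, hence vanishes. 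You instead push the measure $h\,dm$ forward under $w$ to the compact image interval $J$, apply classical Weierstrass and Riesz representation there, and pull back through injectivity. These are two renderings of one argument (Stone--Weierstrass for a singly generated algebra is proved precisely by transferring to the image of the generator), so neither route gains generality; the paper's is shorter, yours makes the measure-theoretic mechanism explicit at the cost of invoking that an injective continuous map takes Borel sets to Borel sets.

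The step you identify as the crux is indeed the one sore point, and you are right that the sign of $\phi$, and not only its strict monotonicity, is what is really needed. The paper does not do better here: it asserts that the algebra separates points ``since $\phi$ is an increasing function'', which is exactly the claim you leave as ``granting that $w$ is strictly increasing''. Without a sign hypothesis that claim --- and the lemma itself --- fails: take $q = 2$ and $\phi(x) = x - 3/8$, so that on $[0,1/2)$ one has $w(x) = (x - 3/8)(x + 1/8) = (x - 1/8)^2 - 1/16$, which is symmetric about $x = 1/8$; then every $Q(w)$ is symmetric about $1/8$ on $[0,1/4]$, so for $F = \mathbf 1$ the closed span of $\{Q(w)F\}$ lies in the proper closed subspace of functions with that local symmetry and cannot be dense in $L^p([0,1/2])$. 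The lemma is correct under the additional hypothesis $\phi \geq 0$ (equivalently $\phi(0) \geq 0$; this is automatic in Theorems \ref{cyclicityw}--\ref{psiwt}, where $\phi(0) = 0$ is assumed, but Theorem \ref{cyclicitywt} as stated inherits the same gap), and under that hypothesis your monotonicity claim is immediate: on $(0,1/q)$ every factor $\phi(\cdot + j/q)$ is positive and strictly increasing, so the product is strictly increasing, and $w(0) < w(x)$ for $x > 0$ whether $\phi(0) = 0$ or $\phi(0) > 0$. With that proviso --- which the paper's own proof needs just as much as yours --- your proof is complete.
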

		\begin{proof}
			Suppose that $1 / p + 1 / p' = 1$ and that $G$ is a function in $L ^ {p'}([0, 1 / q])$ such that 
			\[
				\int _ 0 ^ {1 / q}
					Q(w(t))F(t)\adhe{G(t)}
				dt = 0 
				\quad
				\text{for every } Q \in \mathbf C[\xi].
			\]
			Since $\phi$ is continuous on $[0, 1]$, the function $w$ admits a continuous extension on $[0, 1 / q]$, denoted by $w _ 0.$
			Then 
			\[
				\int _ 0 ^ {1 / q}
					Q(w _ 0(t)) F(t) \adhe{G(t)}
				dt = 0
				\quad
				\text{for every } Q \in \mathbf C[\xi].
			\]
			Since $\phi \in \mathcal C([0, 1], \mathbf R)$ is an increasing function, the algebra $\{Q(w _ 0) ; Q \in \mathbf C[\xi]\}$ in $\mathcal C([0, 1 / q])$ separates points, contains the constant functions and is closed under complex conjugation. 
			Then this algebra is dense in $(\mathcal C([0, 1 / q]), \|\cdot\| _ \infty)$ by the Stone-Weierstrass's Theorem.
			Therefore 
			\[
				\int _ 0 ^ {1 / q}
					f(t) F(t) \adhe{G(t)}
				dt
				= 0
				\quad
				\text{for every }
				f \in \mathcal C([0, 1/q]).
			\]
			Thus the function $F \adhe{G}$ in $L ^ 1 ([0, 1 / q])$ is the constant function equal to zero and so is the function $G$ in $L ^{p'}([0, 1 / q]).$
		\end{proof}
	
		With these tools at hand, we can now prove Theorem \ref{cyclicitywt}.
		\begin{proof}[Proof of Theorem \ref{cyclicitywt}]
			Assume that $f$ satisfies 
			$
				m(\{
					t \in [0, 1] ;
					\Delta _ \phi(f, r / q)(t) = 0
				\})
				= 0.
			$ 
			The union 
			\[
				A = 
				\bigcup _ {n \in \mathbf N}
					\left\{
						h \in L ^ \infty([0, 1]) ;
						h = 0 
						\text{ 
							on 
						}
						\Omega _ {n, f, \phi}
					\right\}
			\]
			is dense in $L ^ p([0, 1])$ since 
			$
				m(\cap _ {n \in \mathbf N}
					\Omega _ {n, f, \phi}
				) = 0
			.$
			Fix $\varepsilon > 0$ and 
			$
				h = 
				\sum _ {j = 0} ^ {q - 1}
					h _ j T _ {\phi, \alpha} ^ j f
				\in A
			.$
			In order to prove that there exists $Q \in \mathbf C[\xi]$ such that 
			$
				\|
					Q(T _ {\phi, \alpha}) f -
					h
				\| _ p
				< \varepsilon,
			$
			one can remark that
			\[
				Q(T _ {\phi, \alpha}) f 
				=
				\sum _ {k = 0} ^ d
					a _ k T _ {\phi, \alpha} ^ k f
				=
				\sum _ {j = 0} ^ {q - 1}
					\sum _ {n = 0} ^ m
						a _ {n q + j} T _ {\phi, \alpha} ^ {nq + j} f
				=
				\sum _ {j = 0} ^ {q - 1}
					\sum _ {n = 0} ^ m
						a _ {n q + j} w ^ n T _ {\phi, \alpha} ^ j f
				=
				\sum _ {j = 0} ^ {q - 1}
				Q _ j(w)T _ {\phi, \alpha} ^ j f
			\]
			for every 
			$
				Q =
				\sum _ {k = 0} ^ d
					a _ k \xi ^ k
				\in \mathbf C[\xi]
			$ since $T _ {\phi, \alpha} ^ q f = w f$ and by $1 / q$-periodicity of $w.$
			Therefore, in order to prove that there exists $Q \in \mathbf C[\xi]$ such that 
			$
				\|
					Q(T _ {\phi, \alpha}) f - \sum _ {j = 0} ^ {q - 1} h _ j T _ {\phi, \alpha} ^ j f
				\| _ p 
				< \varepsilon,
			$
			it suffices to show that
			for every $j \in \{0, \ldots, q - 1\}$, there exists $Q _ j \in \mathbf C[\xi]$ such that 
			$
				\|
					Q _ j(w)(T _ {\phi, \alpha}) - h _ j T _ {\phi, \alpha} ^ j f
				\| _ p
				< \varepsilon / q
			$ 
			and then the polynomial $Q$ defined by 
			$
				Q =
				\sum _ {j = 0} ^ {q - 1}
					Q _ j(\xi ^ q) \xi ^ j
			$ 
			will satisfy 
			$
				\|
					Q (T _ {\phi, \alpha})f - h
				\| _ p
				< \varepsilon.
			$
			Let $j \in \{0, \ldots, q - 1\}$ and consider the function
			\[
				F _ j \colon t \in [0, 1 / q] \mapsto
				\abs{T _ {\phi, \alpha} ^ j f(t)} + \abs{T _ {\phi, \alpha} ^ jf(\{t + 1 / q\})} + \ldots + \abs{T _ {\phi, \alpha} ^ j f(\{t + (q - 1) / q\})}.
			\]	
			For every $t \in [0, 1 / q]$, if $F _ j(t) = 0$ then $\Delta _ \phi (f, r / q)(t) = 0$ because its $j$-th column would be zero.
			So 
			$
				m(\{
					t \in [0, 1 / q] ;
					F _ j(t) = 0
				\})
				= 0
			$ 
			since
			$
				m(\{
					t \in [0, 1] ;
					\Delta _ \phi(f, r / q)(t) = 0
				\})
				= 0.
			$
			By Lemma \ref{density1/q} there exists $Q _ j \in \mathbf C[\xi]$ such that 
			$
				\|
					Q _ j(w) F _ j - h _ jF _ j
				\| _ {[0, 1 / q], p} 
				< \varepsilon / q.
			$
			However since $p > 1$ and by $1/q$-periodicity of $Q _ j(w) - h _ j$
			\begin{align*}
				\|
					(Q _ j(w) - h _ j)F _ j
				\| _ {[0, 1 / q], ^p} ^ p
				& =
					\int _ 0 ^ {1 / q}
						\abs{
							Q _ j(w(t)) - h _ j(t)
						} ^ p 
						\left(
							\sum _ {k = 0} ^ {q - 1}
								\abs{T _ {\phi, \alpha} ^ j f(t + k / q)}
						\right) ^ p
					dt
				\\
				& \geq 
				\sum _ {k = 0} ^ {q - 1}
					\int _ 0 ^ {1 / q}
						\abs{
							Q _ j(w(t)) - h _ j(t)
						} ^ p
						\abs{
							T _ {\phi, \alpha} ^ j f(t + k / q))
						} ^ p
					dt
				\\
				& \geq
					\int _ 0 ^ 1
						\abs{
							(Q _ j(w(t)) - h _ j(t)) T _ {\phi, \alpha} ^ j f(t)
						} ^ p
					dt
				\\
				& \geq
				\| 
					Q _ j(w)T _ {\phi, \alpha} ^ j f - h _ j T _ {\phi, \alpha} ^ j f
				\| _ {p} ^ p.
			\end{align*}
			Then
			$
				\| Q(T _ {\phi, \alpha}) f - h \| _ p
				\leq
				\sum _ {j = 0} ^ {q - 1}
					\|
						Q _ j(w) T _ {\phi, \alpha} ^ j f - h _ j T _ {\phi, \alpha} ^ j f
					\| _ p
				<
				\sum _ {j = 0} ^ {q - 1}
					\varepsilon / q
				\leq \varepsilon,				
			$
			so $f$ is cyclic for $T _ {\phi, \alpha}.$
			
			Conversely, let us now assume that the set
			$
				A = 
				\{
					t \in [0, 1] ;
					\Delta _ \phi(f, r / q)(t) = 0
				\}
			$ 
			satisfies $m(A) > 0.$
			Then there exist functions $a _ 0, \ldots, a _ {q - 1}$ on $[0, 1]$, not all zero, such that 
			\[
				\begin{pmatrix}
					f(t) & \ldots & f(\{t + (q - 1) r/ q\}) \\
					T _ {\phi, \alpha} f(t) & \ldots & T _ {\phi, \alpha} f(\{t + (q - 1) r/ q\}) \\
					\vdots & & \vdots \\
					T _ {\phi, \alpha} ^ {q - 1} f(t) & \ldots & T _ {\phi, \alpha} ^ {q - 1} f(\{t + (q - 1)r / q\})
				\end{pmatrix}
				\begin{pmatrix}
					a _ 0(t) \\
					a _ 1(t) \\
					\vdots\\
					a _ {q - 1}(t)
				\end{pmatrix}
				=
				\begin{pmatrix}
					0 \\
					0 \\
					\vdots \\
					0
				\end{pmatrix}
			\]
			a.e. on $A$, that is to say
			$
				\sum _ {j = 0} ^ {q - 1}
					a _ j(t) T _ {\phi, \alpha} ^ i f(\{t + jr / q\})
				= 0
			$
			for every $i \in \{0, \ldots, q - 1\}.$
			If $\varphi = Q(T _ {\phi, \alpha}) f$ with $Q = \sum _ {k = 0} ^ d b _ k \xi ^ k \in \mathbf C[\xi]$, then
			\begin{align*}
				\sum _ {j = 0} ^ {q - 1} a _ j(t) \varphi(\{t + jr / q\})
				& =
				\sum _ {k = 0} ^ d
					b _ k
					\sum _ {j = 0} ^ {q - 1}
						a _ j(t) T _ {\phi, \alpha} ^ k f(\{t + jr / q\})\\
				& =
				\sum _ {n = 0} ^ m
					\sum _ {i = 0} ^ {q - 1}
						b _ {nq + i}
						\sum _ {j = 0} ^ {q - 1}
							a _ j(t) T _ {\phi, \alpha} ^ {nq + i} f(\{t + jr / q\})\\
				& =
				\sum _ {n = 0} ^ m
					\sum _ {i = 0} ^ {q - 1}
						b _ {nq + i} w ^ n(t)
						\sum _ {j = 0} ^ {q - 1}
							a _ j(t) T _ {\phi, \alpha} ^ i f(\{t + jr / q\}) \\
				& = 0
			\end{align*}
			a.e. on $A$ since $T _ {\phi, \alpha} ^ {nq + i} f = w ^ n T _ {\phi, \alpha} ^ i f$ and by $1/q$-periodicity of $w.$
			Therefore 
			$
				\sum _ {j = 0} ^ {q - 1}
					a _ j(t)\varphi(\{t + j r / q\})
				= 0
			$
			whenever $\varphi$ is a function in the closed subspace generated by 
			$
				\{
					T _ {\phi, \alpha} ^ n f ;
					n \geq 0
				\}.
			$ 
			So the subspace $\{Q(T _ {\phi, \alpha})f ; Q \in \mathbf C[\xi]\}$ is not dense in $L ^ p([0, 1])$, that is to say, $f$ is not cyclic for $T _ {\phi, \alpha}.$
		\end{proof}
		
		Hence if $\phi$ is a holomorphic function, 
		Theorem \ref{cyclic} still holds for the weighted translation operators.
		
		\begin{Thm}\label{cyclicityw}
			Let $\phi \in \mathcal C([0, 1], \mathbf R)$ be an increasing function satisfying $\phi(0) = 0$ and which is holomorphic on an open neighborhood of $[0, 1].$
			Any holomorphic function $f$ on an open neighborhood of $[0, 1]$ such that $f(0) \ne 0$ is cyclic for $T _ {\phi, \alpha}$ for every $\alpha \in (0, 1) \cap \mathbf Q$.			
		\end{Thm}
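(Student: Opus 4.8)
The plan is to mimic the proof of Theorem~\ref{cyclic}, replacing the cyclicity criterion of Theorem~\ref{cyclicrat} by its weighted counterpart Theorem~\ref{cyclicitywt}, and replacing the fractional-part factors $\{mr/q\}$ by their images $\phi(\{mr/q\})$ under the weight. Fix $\alpha = r/q \in (0,1) \cap \mathbf Q$ with $r$ and $q$ coprime. The first step I would carry out is to record the elementary but crucial fact that $\phi$ is \emph{strictly} positive on $(0,1]$: since $\phi$ is increasing with $\phi(0) = 0$, it is non-negative on $[0,1]$ and its zero set is an interval $[0,c]$; if $c > 0$, then the holomorphy of $\phi$ on an open neighborhood of $[0,1]$ together with the identity theorem would force $\phi \equiv 0$, which is excluded. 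Hence $\phi(\{mr/q\}) = 0$ exactly when $m \equiv 0 \pmod q$ and $\phi(\{mr/q\}) > 0$ otherwise, precisely mirroring the vanishing pattern of $\{mr/q\}$ exploited in the unweighted case.

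Next I would establish the two structural properties of $\Delta_\phi(f, r/q)$ that drive the argument, just as in Theorem~\ref{cyclic}. On one hand, $\abs{\Delta_\phi(f, r/q)}$ is $1/q$-periodic: adding $1/q$ to $t$ cyclically permutes the shifts $\{t + ir/q\}$ labelling the rows of the defining matrix (because $1/q = \{j_0 r/q\}$ for the unique $j_0$ with $j_0 r \equiv 1 \pmod q$), so the determinant changes only by a sign. On the other hand, for $t \in [0, 1/q)$ we have $\{t + mr/q\} = t + \{mr/q\}$, so each entry $T_{\phi, r/q}^j f(\{t + ir/q\})$ equals the product $\phi(t + \{ir/q\}) \cdots \phi(t + \{(i+j-1)r/q\})\, f(t + \{(i+j)r/q\})$, a product of holomorphic functions of $t$; thus $\Delta_\phi(f, r/q)$ is holomorphic on $(0, 1/q)$ and right-continuous at $0$.

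Then I would argue by contradiction exactly as in Theorem~\ref{cyclic}. If $f$ were not cyclic for $T_{\phi, \alpha}$, Theorem~\ref{cyclicitywt} would give $m(\{t \in [0,1] ; \Delta_\phi(f, r/q)(t) = 0\}) > 0$, and by $1/q$-periodicity $\Delta_\phi(f, r/q)$ would vanish on a subset of $[0, 1/q)$ of positive measure; being holomorphic on $(0, 1/q)$ and right-continuous at $0$, it would vanish identically on $[0, 1/q)$. But evaluating at $t = 0$ gives an anti-triangular determinant: by the vanishing pattern above, row $0$ is nonzero only in column $0$, while for $i \geq 1$ row $i$ is nonzero only in columns $0, \ldots, q - i$, so the unique contributing permutation is $i \mapsto q - i$ (with $0 \mapsto 0$), yielding
\[
	\Delta_\phi(f, r/q)(0) = \pm\, f(0)^q \prod_{m = 1}^{q - 1} \phi(\{mr/q\})^{m} \ne 0,
\]
since $f(0) \ne 0$ and $\phi > 0$ on $(0,1]$. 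This contradiction shows that $f$ is cyclic for $T_{\phi, \alpha}$.

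The main obstacle is precisely the evaluation at $t = 0$: one must verify that the weight vanishing \emph{only} at $0$ reproduces the same anti-triangular sparsity pattern as in the unweighted case, so that the determinant collapses to the single product over the anti-diagonal and is therefore nonzero. Everything else is a faithful transcription of the proof of Theorem~\ref{cyclic}, with the analyticity of $\phi$ playing the role that the explicit affine factors $t + \{mr/q\}$ played there.
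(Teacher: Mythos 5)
Your proposal is correct and follows essentially the same route as the paper: reduce to the criterion of Theorem \ref{cyclicitywt}, use the $1/q$-periodicity of $\abs{\Delta_\phi(f,r/q)}$ together with holomorphy on $(0,1/q)$ and right-continuity at $0$ to conclude that non-cyclicity forces $\Delta_\phi(f,r/q)\equiv 0$ on $[0,1/q)$, and contradict this by the anti-triangular evaluation $\Delta_\phi(f,r/q)(0)=\pm f(0)^q\prod_{m=1}^{q-1}\phi(\{mr/q\})^m\ne 0$, which matches the paper's product formula. Your two added details — the identity-theorem argument that $\phi>0$ on $(0,1]$, and the explicit verification that the anti-diagonal is the unique contributing permutation — are correct fillings-in of steps the paper leaves implicit.
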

		\begin{proof}
			Let $\alpha = r / q$ be a rational number such that $0 < r < q$ and $r$ and $q$ are coprime.
			It suffices to show that 
			$
				m(\{
					t \in [0, 1] ;
					\Delta _ \phi(f, r / q)(t) = 0
				\})
				= 0
			$
			by Theorem \ref{cyclicitywt}.
			The function $\abs{\Delta _ \phi(f, r / q)}$ is $1/q$-periodic on $[0, 1]$ and $\Delta _ \phi(f, r / q)$ is holomorphic on $(0, 1 / q)$, since $\phi$ is holomorphic, and right-continuous at 0.
			Indeed $\Delta _ \phi(f, r / q)(t)$ is equal to the determinant
			\[
				\begin{vmatrix}
					\left(
						\phi\left(
							t + 
							\left\{
								\frac {ir} q
							\right\}
						\right)
						\ldots 
						\phi\left(
							t + 
							\left\{
								\frac{(i + j - 1) r} q
							\right\}
						\right)
						f\left(
							t + 
							\left\{
								\frac{(i + j) r} q
							\right\}
						\right)
					\right) _ {0 \leq i, j \leq q - 1}
				\end{vmatrix}
			\]
			whenever $t \in [0, 1 / q).$
			Then as in the case of the Bishop operators,
			$\Delta _ \phi (f, r / q)(t) = 0$ for every $t \in [0, 1 / q)$ if $f$ is not cyclic for $T _ {\phi, \alpha}$.
			However $\Delta _ \phi(f, r / q)(0)$ is equal to the determinant
			\[
				\left|\begin{array}{c c c c l}
					f(0) & 0 & \ldots & \ldots & 0 \\
					f(\{r / q\}) & \phi(\{r / q\}) f(\{2r / q\}) & \ldots & \ldots & \phi(\{r / q\}) \ldots \phi(\{(q - 1) r / q\})f(0) \\
					f(\{2r/q\}) & \phi(\{2r / q\}) f(\{3r / q\}) & & \reflectbox{$\ddots$} & 0 \\
					\vdots & \vdots & \reflectbox{$\ddots$} & \reflectbox{$\ddots$} & \vdots \\
					f(\{(q - 1) r / q\}) & \phi(\{(q - 1) r / q\}) f(0) & 0 & \ldots & 0
				\end{array}\right|
			\]
			which is in turn equal to 
			\[
				(-1) ^ {(q - 2)(q - 1) / 2}f(0) ^ q
				\prod _ {i = 0} ^ {q - 2}
					\phi(\{(q - 1) r / q\}) \ldots \phi(\{(q - 1 - i) r / q\})
				\ne 0.
			\]
			So $f$ is cyclic for $T _ {\phi, \alpha}.$
		\end{proof}
	
		\begin{Def}
			Let $\phi \in L ^ \infty([0, 1]).$
			For every subset $A$ of $[0, 1]$, we define the set
			\[
				\text{Cycl} _ {A} ^ {(\phi)} =
				\bigcap _{\alpha \in A}
					\{
						f \in L ^ p([0, 1]) ;
						f \text{ is cyclic for } T _ {\phi, \alpha}
					\}
			\]
			of common cyclic vectors for all operators $T _ {\phi, \alpha}$, $\alpha \in A.$
			
			In particular $\text{Cycl} _ {\mathbf Q \cap (0, 1)} ^ {(\phi)}$ contains any holomorphic function $f$ on an open neighborhood of $[0, 1]$ such that $f(0) \ne 0$ if $\phi$ is an increasing
			function on $[0, 1]$ that is holomorphic on an open neighborhood of $[0, 1]$ satisfying $\phi(0) = 0.$
		\end{Def}
	
		It follows that Theorem \ref{comeager} can be generalized to the weighted translation operators.
		
		\begin{Lem}\label{continuouswt}
			Let $\phi \in L ^ \infty([0, 1])$.
			For every $Q \in \mathbf C[\xi]$, the map
			\[
				\fonction{
					\varphi _ Q \colon
				}{
					([0, 1], \abs{\cdot}) \times (L ^ p([0, 1]), \|\cdot\| _ p)
				}{
					(L ^ p([0, 1]), \|\cdot\| _ p)
				}{
					(\alpha, f)
				}{
					Q(T _ {\phi, \alpha}) f
				}
			\]
			is continuous.
		\end{Lem}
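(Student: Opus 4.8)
The plan is to mirror the proof of Lemma \ref{continuous}, the only change being that the weight $x$ of the Bishop operator is replaced by the fixed weight $\phi \in L^\infty([0,1])$. Writing $T_{\phi,\alpha} = M_\phi U_\alpha$, where $M_\phi f = \phi f$ is the (fixed) multiplication operator and $U_\alpha f = f(\{\cdot + \alpha\})$ is the translation operator, one has $\|T_{\phi,\alpha}\| = \|\phi\|_\infty$ for every $\alpha \in [0,1]$, so that the family $\{T_{\phi,\alpha} ; \alpha \in [0,1]\}$ lies in the fixed ball $\mathcal B_{\|\phi\|_\infty}(L^p([0,1]))$. Exactly as before, I would factor $\varphi_Q = \varphi_{3,Q} \circ ((\varphi_{2,Q} \circ \varphi_1) \oplus \mathrm{id})$, where now $\varphi_1 \colon \alpha \mapsto T_{\phi,\alpha}$ takes values in $(\mathcal B_{\|\phi\|_\infty}(L^p([0,1])), \text{SOT})$, the map $\varphi_{2,Q} \colon T \mapsto Q(T)$ takes values in $\mathcal B_M(L^p([0,1]))$ with $M = \sum_k \abs{a_k} \|\phi\|_\infty^k$ for $Q = \sum_k a_k \xi^k$, and $\varphi_{3,Q} \colon (T,f) \mapsto Tf$ is the evaluation map. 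The continuity of $\varphi_{2,Q}$ and $\varphi_{3,Q}$ for the strong operator topology on these bounded balls is established exactly as in Lemma \ref{continuous} and requires nothing new.

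The only substantive point is therefore the continuity of $\varphi_1$, i.e. that for a fixed $f \in L^p([0,1])$ and a sequence $\alpha_n \to \alpha$ in $[0,1]$ one has $\|T_{\phi,\alpha_n}f - T_{\phi,\alpha}f\|_p \to 0$. I would run the same density argument: given $\varepsilon > 0$, choose $g \in \mathcal C([0,1])$ with $\|f - g\|_p < \varepsilon$ and split
\[
	\|T_{\phi,\alpha_n}f - T_{\phi,\alpha}f\|_p \leq \|T_{\phi,\alpha_n}(f-g)\|_p + \|T_{\phi,\alpha_n}g - T_{\phi,\alpha}g\|_p + \|T_{\phi,\alpha}(g-f)\|_p.
\]
The two outer terms are each bounded by $\|\phi\|_\infty \|f-g\|_p < \|\phi\|_\infty \varepsilon$, since $\|T_{\phi,\beta}\| = \|\phi\|_\infty$, while for the middle term the pointwise bound $\abs{\phi(t)} \leq \|\phi\|_\infty$ gives
\[
	\|T_{\phi,\alpha_n}g - T_{\phi,\alpha}g\|_p^p \leq \|\phi\|_\infty^p \int_0^1 \abs{g(\{t+\alpha_n\}) - g(\{t+\alpha\})}^p \, dt,
\]
and the integral on the right tends to $0$ as $n \to +\infty$ by the uniform continuity of $g$ on $[0,1]$, via the very same domain-splitting estimate (according to the sign of $\alpha - \alpha_n$) carried out in the proof of Lemma \ref{continuous}. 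Combining the three terms yields $\limsup_n \|T_{\phi,\alpha_n}f - T_{\phi,\alpha}f\|_p \leq C \|\phi\|_\infty \varepsilon$ for an absolute constant $C$, whence the continuity of $\varphi_1$ at $\alpha$.

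I do not expect any genuine obstacle: the weight $\phi$ enters only through the uniform bound $\|\phi\|_\infty$, which simply replaces the bound $\sup_{t \in [0,1]} t = 1$ used for the Bishop weight, so that no idea beyond the strong continuity of translation in $L^p$ is needed. The one place deserving a word of care is the uniform operator bound invoked in the factorization, namely that $\|Q(T_{\phi,\alpha})\|$ is bounded independently of $\alpha$; but this is immediate from $\|T_{\phi,\alpha}\| = \|\phi\|_\infty$ together with the triangle inequality, so that every map in the factorization indeed lands in a fixed norm-ball equipped with the strong operator topology, and the argument goes through verbatim.
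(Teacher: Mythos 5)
Your proposal is correct and follows essentially the same route as the paper's own proof: the identical factorization $\varphi_Q = \varphi_{3,Q} \circ ((\varphi_{2,Q}\circ \varphi_1) \oplus \mathrm{id})$ through the balls $\mathcal B_{\|\phi\|_\infty}$ and $\mathcal B_{M_Q}$ with the strong operator topology, and the same density argument (approximation by a uniformly continuous $g$, with the two outer terms controlled by $\|\phi\|_\infty\|f-g\|_p$ and the middle term reduced to the translation estimate of Lemma \ref{continuous}). Nothing is missing; the observation that $\phi$ enters only through the bound $\|\phi\|_\infty$ is exactly how the paper handles it.
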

		\begin{proof}
			Let $Q = \sum _ {k = 0} ^ d a _ k \xi ^ k \in \mathbf C[\xi]$.
			As in the case of the Bishop operators, one can write 
			\[
				\varphi _ Q = \varphi _ {3, Q} \circ ((\varphi _ {2, Q}\circ \varphi _ 1) \oplus id)
			\]
			where 
			\[
				\fonction{
					id \colon
				}{
					(L ^ p([0, 1]), \| \cdot \| _ p)
				}{
					(L ^ p([0, 1]), \| \cdot \| _ p)
				}{
					f
				}{
					f,
				}
			\]
			\[
				\fonction{
					\varphi _ {1} \colon
				}{
					([0, 1], \abs{\cdot})
				}{
					(\mathcal B _ {\|\phi\| _ \infty}(L ^ p([0, 1])), SOT)
				}{
					\alpha
				}{
					T _ {\phi, \alpha},
				}
			\]
			\[
				\fonction{
					\varphi _ {2, Q} \colon
				}{
					(\mathcal B _ {\| \phi \| _ \infty}(L ^ p ([0, 1])), SOT)
				}{
					(\mathcal B _ {M _ Q}(L ^ p([0, 1])), SOT)
				}{
					T
				}{
					Q(T),
				}
			\]
			\[
				\fonction{
					\varphi _ {3, Q} \colon
				}{
					(\mathcal B _ {M _ Q}(L ^ p([0, 1])), SOT) \times (L ^ p([0, 1]), \|\cdot \| _ p)
				}{
					(L ^ p([0, 1]), \|\cdot\| _ p)
				}{
					(T, f)
				}{
					Tf
				}
			\]
			with $M _ Q = \sum _ {k = 0} ^ d \abs{a _ k} \|\phi \| _ \infty ^ k.$
			We will only prove the continuity of $\varphi _ 1$ since the other are easily proved.
			Let $\alpha \in [0, 1]$, and let $(\alpha _ n) _ {n \geq 1}$ te be a sequence of elements of $[0, 1]$ such that $\alpha _ n \to \alpha$ as $n \to + \infty$.
			Given $f \in L ^ p([0, 1])$, we are going to show that 
			$
				\| T _ {\phi, \alpha _ n} f - T _ {\phi, \alpha} f \| _ p 
				\to 0
			$ 
			as $n \to + \infty$.
			Let $\varepsilon > 0$.
			By density there exists $g \in \mathcal C([0, 1])$ such that $\|f - g\| _ p < \varepsilon.$
			For every $n \geq 1$,
			$
				\|T _ {\phi, \alpha _ n} f - T _ {\phi, \alpha} f \| _ p 
				\leq
				\|T _ {\phi, \alpha _ n}f - T _ {\phi, \alpha _ n} g \| _ p 
				+
				\|T _ {\phi, \alpha _ n} g - T _ {\phi, \alpha} g \| _ p
				+
				\|T _ {\phi, \alpha} g - T _ {\phi, \alpha} f\| _ p.
			$
			On the one hand for every $\beta \in [0, 1]$
			\[
				\| T _ {\phi, \beta} f - T _ {\phi, \beta} g \| _ p ^ p
				=
				\int _ 0 ^ 1
					\abs{
						\phi(t)f(\{t + \beta\}) - \phi(t)g(\{t + \beta\})
					} ^ p
					dt 
				\leq \| \phi \| _ \infty ^ p \|f - g \| _ p ^ p,	
			\]
			so that 
			$
				\|
					T _ {\phi, \alpha _ n} f - T _ {\phi, \alpha _ n} g 
				\| _ p 
				+
				\|
					T _ {\phi, \alpha} g - T _ {\phi, \alpha} f
				\| _ p 
				\leq
				2 \| \phi \| _ \infty \|f - g \| _ p.
			$
			On the other hand as in the proof of Lemma \ref{continuous}
			\begin{align*}
				\| T _ {\phi, \alpha _ n} g - T _ {\phi, \alpha} g \| _ p ^ p
				& =
				\int _ 0 ^ 1
					\abs{
						\phi(t) g(\{t + \alpha _ n\}) - \phi(t) g(\{t + \alpha\})
					} ^ p
					dt \\
				& \leq
				\| \phi \| _ \infty ^ p
				\int _ 0 ^ 1
					\abs{
						g(\{t + \alpha _ n\}) - g(\{t + \alpha\})
					} ^ p
					dt \\
				& \leq
				2 \| \phi \| _ \infty ^ p \varepsilon ^ p
			\end{align*}
			whenever $n$ is sufficiently large since $g$ is uniformly continuous and since $\alpha _ n \to \alpha$ as $n \to + \infty$.
			Eventually if $n$ is sufficiently large, we get that 
			$
				\|
					T _ {\phi, \alpha _ n} f - T _ {\phi, \alpha} f 
				\| _ p
				\leq 
				(2 + 2 ^ {1 / p}) \| \phi \| _ \infty \varepsilon
			$ 
			and thus $\varphi _ 1$ is continuous at $\alpha$.
		\end{proof}
		
		\begin{Thm}\label{comeagerw}
			Let $\phi \in \mathcal C([0, 1], \mathbf R)$ be an increasing
			convex
			function satisfying $\phi(0) = 0$ and which is holomorphic on an open neighborhood of $[0, 1].$
			The set of parameters
			$
				\{
					\alpha \in [0, 1] ;
					T _ {\phi, \alpha} \text{ is cyclic}
				\}
			$
			is a co-meager set in $[0, 1].$
		\end{Thm}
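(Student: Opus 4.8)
The plan is to mimic exactly the proof of Theorem \ref{comeager}, replacing the Bishop operator $T_\alpha$ by the weighted translation operator $T_{\phi,\alpha}$ throughout and invoking the weighted analogues of the three ingredients used there. Set
\[
	A = \{(\alpha, f) \in [0,1] \times L^p([0,1]) ; f \text{ is cyclic for } T_{\phi,\alpha}\}.
\]
The goal is to show that $A$ is a dense $G_\delta$-set in $[0,1] \times L^p([0,1])$, and then to read off the conclusion from the Kuratowski-Ulam machinery via Corollary \ref{Kuratowski-UlamCoro}.

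First I would establish that $A$ is a $G_\delta$-set. Fixing a basis of open sets $(U_n)_{n \geq 1}$ of $L^p([0,1])$, a pair $(\alpha, f)$ lies in $A$ if and only if for every $n \geq 1$ there is some $Q \in \mathbf C[\xi]$ with $Q(T_{\phi,\alpha}) f \in U_n$, so that $A = \bigcap_{n \geq 1} \bigcup_{Q \in \mathbf C[\xi]} \varphi_Q^{-1}(U_n)$, where $\varphi_Q(\alpha, f) = Q(T_{\phi,\alpha})f$. Since each $\varphi_Q$ is continuous by Lemma \ref{continuouswt}, each set $\bigcup_{Q} \varphi_Q^{-1}(U_n)$ is open and $A$ is $G_\delta$, verbatim as in the proof of Theorem \ref{comeager}.

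The density of $A$ is where the hypotheses on $\phi$ enter, and this is the only step requiring care. Given $(\beta, g) \in [0,1] \times L^p([0,1])$ and $\varepsilon > 0$, choose a rational $\alpha \in (0,1) \cap \mathbf Q$ with $\abs{\beta - \alpha} < \varepsilon$. By Theorem \ref{cyclicityw} the operator $T_{\phi,\alpha}$ is cyclic (the assumptions that $\phi$ is increasing, holomorphic on a neighbourhood of $[0,1]$ and satisfies $\phi(0)=0$ are exactly what that theorem needs), while by Theorem \ref{wtspectrum} one has $\sigma_p(T_{\phi,\alpha}^*) = \emptyset$ (here the convexity of $\phi$ is used). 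These two facts together let Proposition \ref{cyclicbirkhoff} apply, so the cyclic vectors of $T_{\phi,\alpha}$ form a dense $G_\delta$-set in $L^p([0,1])$; pick $f$ in this set with $\|g - f\|_p < \varepsilon$. Then $(\alpha, f) \in A$ lies within $\varepsilon$ of $(\beta, g)$, proving density.

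Finally, $A$ being a dense $G_\delta$-set is in particular co-meager in $[0,1] \times L^p([0,1])$, so Corollary \ref{Kuratowski-UlamCoro} shows that
\[
	B = \{\alpha \in [0,1] ; \{f \in L^p([0,1]) ; f \text{ is cyclic for } T_{\phi,\alpha}\} \text{ is co-meager in } L^p([0,1])\}
\]
is co-meager in $[0,1]$. Since $B \subset \{\alpha \in [0,1] ; T_{\phi,\alpha} \text{ is cyclic}\}$, the theorem follows. The argument presents no genuinely new obstacle beyond Theorem \ref{comeager}; the only point to watch is that the combined hypotheses on $\phi$ (increasing, convex, holomorphic, $\phi(0)=0$) are precisely those under which both the rational-case cyclicity (Theorem \ref{cyclicityw}) and the emptiness of the adjoint point spectrum (Theorem \ref{wtspectrum}) are simultaneously available, so that the density step can be carried out.
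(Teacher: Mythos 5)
Your proposal is correct and coincides essentially verbatim with the paper's own proof: the same set $A$, the same $G_\delta$ argument via Lemma \ref{continuouswt}, the same density step combining Theorem \ref{cyclicityw} (cyclicity for rational $\alpha$) with Theorem \ref{wtspectrum} (empty adjoint point spectrum, where convexity enters) through Proposition \ref{cyclicbirkhoff}, and the same Kuratowski-Ulam conclusion. Your remark pinpointing which hypotheses on $\phi$ feed into which ingredient accurately reflects how the paper's hypotheses are used.
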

		\begin{proof}
			Let us set 
			$
				A = 
				\{
					(\alpha, f) \in [0, 1] \times L ^ p([0, 1]);
					f \text{ is cyclic for } T _ {\phi, \alpha}
				\}.
			$ 
			We will prove that this is a dense $G _ \delta$-set in $[0, 1] \times L ^ p([0, 1]).$
			Let $(U _ n) _ {n \geq 1}$ be a basis of open sets in $L ^ p([0, 1]).$
			For every $(\alpha, f) \in [0, 1] \times L ^ p([0, 1])$
			$f$ is cyclic for $T _ {\phi, \alpha}$ if and only if for every $n \geq 1$, there exists $Q \in \mathbf C[\xi]$ such that $Q(T _ {\phi, \alpha}) \in U _ n.$
			Thus 
			\[
				A = 
				\bigcap _ {n \geq 1}
					\bigcup _ {Q \in \mathbf C[\xi]}
						\varphi _ Q ^ {-1}(U _ n)
			\]
			where 
			the map $\varphi _ Q$ is defined on $[0, 1] \times L ^ p([0, 1])$ by $\varphi _ Q(\alpha, f) = Q(T _ {\phi, \alpha})f$ and is continuous by Lemma \ref{continuouswt} since $\phi$ is bounded on $[0, 1].$
			So $A$ is a $G _ \delta$-set in $[0, 1] \times L ^ p([0, 1]).$ 
			Let now $\varepsilon > 0$ and $(\beta, g) \in [0, 1] \times L ^ p([0, 1]).$
			There exists $\alpha \in (0, 1) \cap \mathbf Q$ such that $\abs{\beta - \alpha} < \varepsilon.$
			Since $T _ {\phi, \alpha}$ is cyclic by Theorem \ref{cyclicityw} and since $\sigma _ p(T _ {\phi, \alpha} ^ *) = \emptyset$ by Theorem \ref{wtspectrum}, the set of cyclic vectors for $T _ {\phi, \alpha}$ is a dense $G _ \delta$-set in $L ^ p ([0, 1]).$
			Then there exists a cyclic vector $f$ for $T _ {\phi, \alpha}$ such that $\|g - f\| _ p < \varepsilon.$
			So $A$ is a dense $G _ \delta$-set in $[0, 1] \times L ^ p([0, 1]).$
			
			By the Kuratowski-Ulam's Theorem, the set 
			\begin{align*}
				B 
				& = 
				\{
					\alpha \in [0, 1] ; 
					\{
						f \in L ^ p([0, 1]) ;
						f \text{ is cyclic for } T _ {\phi, \alpha}
					\}
					\text{ is co-meager set in }
					L ^ p([0, 1])
				\}
			\end{align*}
			is a co-meager set in $[0, 1]$ and $B \subset \{\alpha \in [0, 1] ; T _ {\phi, \alpha} \text{ is cyclic}\}$.
		\end{proof}
	
		The function $\varphi _ {P, f} \colon \alpha \mapsto P(T _ {\phi, \alpha})f$ still being continuous, one can extend Theorem \ref{psi} for a weighted translation operator $T _ {\phi, \alpha}$ whenever $\alpha$ is an irrational number in $[0, 1]$ sufficiently well approached by its convergents.
		
		\begin{Thm}\label{psiwt}
			Let $\phi \in \mathcal C([0, 1], \mathbf R)$ be an increasing 
			function satisfying $\phi(0) = 0$ and which is holomorphic on an open neighborhood of $[0, 1].$
			Let $f \in \text{Cycl} _ {\mathbf Q \cap (0, 1)} ^ {(\phi)}$ be a common cyclic vector for $T _ {\phi, \alpha}$ for every $\alpha \in (0, 1) \cap \mathbf Q$.
			There exists a function $\psi _ {\phi, f} \colon \mathbf N \to \mathbf R _ +$ with the following property: if $(p _ n / q _ n) _ {n \geq 0}$ are the convergents of an irrational number $\alpha$ in $[0, 1]$ and if 
			for every $n \geq 0$ there exists $n _ 0 \geq n$ such that 
			$
				q _ {n _ 0 + 1} > \psi _ {\phi, f}(q _ {n _ 0}),
			$
			then $f$ is cyclic for $T _ {\phi, \alpha}$.
		\end{Thm}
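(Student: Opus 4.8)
The plan is to mimic verbatim the argument proving Theorem \ref{psi}, replacing the Bishop operators $T _ \alpha$ by the weighted translation operators $T _ {\phi, \alpha}$ throughout and substituting the weighted analogues of its two ingredients: the cyclicity of $T _ {\phi, r / q}$ for rational $r / q$ (Theorem \ref{cyclicityw}) in place of Theorem \ref{cyclic}, and the continuity of the map $\alpha \mapsto Q(T _ {\phi, \alpha}) f$ (Lemma \ref{continuouswt}) in place of Lemma \ref{continuous}. The hypotheses imposed on $\phi$ are exactly those guaranteeing both that $\text{Cycl} _ {\mathbf Q \cap (0, 1)} ^ {(\phi)}$ is non-empty and that each $T _ {\phi, r / q}$ with $0 < r < q$ coprime admits $f$ as a cyclic vector, so both inputs are indeed available.

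First I would fix a dense sequence $(g _ j) _ {j \geq 1}$ in $L ^ p([0, 1])$. For each integer $q \geq 2$ I would form the finite sets $R _ q = \{r / q \ ; 0 < r < q, \ \gcd(r, q) = 1\}$ and $G _ q = \{g _ 1, \ldots, g _ q\}$. Since $f$ is a common cyclic vector for all the operators $T _ {\phi, r / q}$ with $r / q \in R _ q$, for each $j \in \{1, \ldots, q\}$ there exists a polynomial $Q _ {r / q, j} \in \mathbf C[\xi]$ with $\| Q _ {r / q, j}(T _ {\phi, r / q}) f - g _ j \| _ p < 1 / 2 ^ q$. Collecting these into the finite set $\mathcal P _ q = \{Q _ {r / q, j} \ ; (r / q, j) \in R _ q \times \{1, \ldots, q\}\}$, the continuity of the maps $\alpha \mapsto Q(T _ {\phi, \alpha}) f$ for $Q \in \mathcal P _ q$ provided by Lemma \ref{continuouswt} yields a $\delta(q) > 0$ such that $\| Q(T _ {\phi, \beta}) f - Q(T _ {\phi, r / q}) f \| _ p < 1 / 2 ^ q$ whenever $Q \in \mathcal P _ q$, $r / q \in R _ q$ and $\abs{\beta - r / q} < \delta(q)$.

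I would then set $\psi _ {\phi, f}(q) = 1 / (q \delta(q))$. Given an irrational $\alpha$ in $(0, 1)$ whose convergents $(p _ n / q _ n) _ {n \geq 0}$ satisfy the gap condition, for each $n \geq 0$ one selects $n _ 0 \geq n$ with $q _ {n _ 0 + 1} > \psi _ {\phi, f}(q _ {n _ 0})$; since $(q _ n) _ {n \geq 0}$ is increasing we have $q _ n \geq n$, and the optimal approximation $\abs{\alpha - p _ {n _ 0} / q _ {n _ 0}} < 1 / (q _ {n _ 0} q _ {n _ 0 + 1}) < \delta(q _ {n _ 0})$ transfers the estimate from $T _ {\phi, p _ {n _ 0} / q _ {n _ 0}}$ to $T _ {\phi, \alpha}$. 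A triangle inequality combining the two bounds then produces a polynomial $Q _ {n _ 0} \in \mathcal P _ {q _ {n _ 0}}$ with $\| Q _ {n _ 0}(T _ {\phi, \alpha}) f - g _ n \| _ p < 2 ^ {-(n - 1)}$, whence $\{P(T _ {\phi, \alpha}) f \ ; P \in \mathbf C[\xi]\}$ is dense and $f$ is cyclic for $T _ {\phi, \alpha}$.

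Since every step rests on an already-established ingredient, no serious obstacle arises. The one point meriting a little care is checking that the hypotheses on $\phi$ in the statement really do furnish both the non-emptiness of $\text{Cycl} _ {\mathbf Q \cap (0, 1)} ^ {(\phi)}$ and the rational cyclicity used at the outset; but these are precisely the assumptions of Theorem \ref{cyclicityw}, so the adaptation of the proof of Theorem \ref{psi} goes through essentially unchanged.
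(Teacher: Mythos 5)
Your proposal is correct and is essentially the paper's own proof: the paper likewise disposes of Theorem \ref{psiwt} by invoking the rational-case cyclicity (Theorem \ref{cyclicityw}) together with the continuity of $\alpha \mapsto Q(T_{\phi,\alpha})f$ (Lemma \ref{continuouswt}) and then running the argument of Theorem \ref{psi} unchanged. Your write-up merely spells out in full what the paper compresses into a one-line reference to that earlier proof.
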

		\begin{proof}
			Since the weighted translation operator $T _ {\phi, \alpha}$ is cyclic whenever $\alpha \in (0, 1) \cap \mathbf Q$ and since the map $\varphi _ {Q, f} \colon \alpha \mapsto Q(T _ {\phi, \alpha})f$ is continuous whenever $Q \in \mathbf C[\xi]$, the result follows from the proof of Theorem \ref{psi}.
		\end{proof}
	
		\begin{Rmq}
			Since we don't assume in Theorem \ref{psiwt} that $\phi$ is a convex function, the operator $T _ {\phi, \alpha} ^ *$ may have eigenvalues according to Theorem \ref{wtspectrum}.
			Then we don't know if the set of cyclic functions for $T _ {\phi, \alpha}$ is a dense set in $L ^ p([0, 1])$ whenever $T _ {\phi, \alpha}$ is cyclic.
		\end{Rmq}	

	\section{Open questions}
		We present in this last section some further comments and open questions. \\
		
		To begin with, although we know that the weighted translation operators $T _ {\phi, \alpha}$ do not satisfy the Hypercyclicity Criterion, we are unable to prove that they are not hypercyclic.
		
		\begin{Ques}
			For every $\phi \in L ^ \infty([0, 1])$ and every $\alpha \in [0, 1]$, is it true that $T _ {\phi, \alpha}$ is not hypercyclic on $L ^ p([0, 1])$? 
		\end{Ques}
	
		As in the hypercyclic case, we do not know if the weighted translation operators $T _ {\phi, \alpha}$ are not supercyclic in general, even if we do know that they cannot satisfy the Supercyclicity Criterion.
		
		\begin{Ques}
			For every $\phi \in L ^ \infty([0, 1])$ and every $\alpha \in [0, 1]$, is it true that $T _ {\phi, \alpha}$ is not supercyclic on $L ^ p([0, 1])$? 
		\end{Ques}
			
		Our results on cyclicity of operators $T _ \alpha$ for $\alpha \in \mathbf R \backslash \mathbf Q$ give rise to many questions.
		
		\begin{Ques}
			Given $f \in L ^ p([0, 1]) \backslash \{\mathbf 0\}$, does there exist a finite set $F \subset \mathbf Q$ such that $f$ is cyclic for $T _ \alpha$ for every $\alpha \in \mathbf Q \backslash F$?
		\end{Ques}
				
				Thanks to Theorem \ref{psi}, this set could help to explicit a set of common cyclic vectors for $T _ \alpha$ for a large set of $\alpha \in [0, 1].$
				
				It would also be interesting to take a closer look at the function $\psi$ appearing in the statement of Theorem \ref{psi}.
				
				\begin{Ques}
					Can the function $\psi$ appearing in the statement of Theorem \ref{psi} be made explicit, for instance in the case where $f = \mathbf 1$?
				\end{Ques}
			
				We know that $f = \mathbf 1$ is a common cyclic vector for $T _ \alpha$ whenever $\alpha \in (0, 1) \cap \mathbf Q$.
				Let $\varepsilon > 0$ and let $\alpha = r / q$ be a rational number in $(0, 1).$
				In order to obtain an explicit form for $\psi$, we would need for every $g \in L ^ \infty([0, 1])$ vanishing on a set $\Omega _ {n, f}$ to explicit a polynomial $Q \in \mathbf C[\xi]$ such that 
				$\| Q ( T _ \alpha) \mathbf 1 - g \| _ p < \varepsilon.$
				To do so, writing $g = g _ 0 T _ \alpha ^ 0 f + \ldots + g _ {q - 1} T _ \alpha ^ {q - 1} f$, we would have to take a look at polynomials $Q _ j \in \mathbf C[\xi]$ such that $\|Q _ j - g _ j\| _ \infty < \varepsilon$ whenever $j \in \{0, \ldots, q - 1\}.$
				In the study of such polynomials, an obstacle is the study of the determinant 
				\[
					\Delta(\mathbf 1, r / q)(t)
					=
					\begin{vmatrix}
						1 & t & \ldots & t \{t + r / q\} \ldots \{t + (q - 2) r / q\} \\
						1 & \{t + r / q\} & \ldots & \{t + r / q\} \ldots \{t + (q - 1) r / q\} \\
						\vdots & \vdots & & \vdots \\
						1 & \{t + (q - 1) r / q\} & \ldots & \{t + (q - 1) r / q\} t \ldots \{t + (q - 3) r / q\}
					\end{vmatrix}.
				\]
				This determinant seems to never vanish on $[0, 1 / q)$ and to be a monotone function on $[0, 1 / q).$ 
				Proving such properties should lead to estimates 
				allowing to obtain an explicit form of the function $\psi.$ \\
				
				Getting back to the study of invariant subspaces, we recall that in 
				\cite{cha20}, the authors have proved that $T _ \alpha$ admits a non-trivial closed hyperinvariant subspace in $L ^ p([0, 1])$ as soon as the convergents $(p _ n / q _ n) _ {n  \geq 0}$ of $\alpha$ satisfy 
				\[
					\log(q _ {n + 1}) 
					\mathop{=} 
						 \limits _ {
						 	n \to + \infty
						 }
						O 
						\left(
							\frac{q _ n}{\log(q _ n) ^ 3}
						\right).
				\]
				This condition gives a bound on the growth of the sequence $(q _ n) _ {n \geq 0}$ of denominators of the convergents of $\alpha.$
				On the other hand, the results we proved in Theorem \ref{psi} state that $T _ \alpha$ will be cyclic as soon as the sequence 
				$
					(q _ n) _ {n \geq 0}
				$ 
				has infinitely many sufficiently large gaps.
				This is not surprising.
				Indeed cyclicity and admiting  non-trivial closed invariant subspaces can be seen as opposed properties of an operator $T$ since $T$ does not admit such a subspace if and only if every non-zero vector is cyclic for $T.$
				Nevertheless, an optimization of the function in the statement of Theorem \ref{psi} $\psi$ could possibly lead to a positive answer to the following question.
				
				\begin{Ques}
					Is it possible to explicit real numbers $\alpha$ such that $T _ \alpha$ is cyclic and admits a non-trivial (hyper)invariant subspace? 
				\end{Ques}
	
	\newpage
	
	\begin{bibdiv}
		\begin{biblist}
			
			\bib{bay09}{book}{
				AUTHOR = {F. Bayart and \'E. Matheron},
				TITLE = {Dynamics of linear operators},
				SERIES = {Cambridge Tracts in Mathematics},
				VOLUME = {179}
				PUBLISHER = {Cambridge University Press},
				YEAR = {2009},
			}
		
			\bib{ber04}{article}{
				AUTHOR = {{T. Berm\'udez, A. Bonilla} and A. Peris},
				TITLE = {On hypercyclicity and supercyclicity criteria},
				JOURNAL = {Bull. Austral. Math. Soc.},
				FJOURNAL = {Bulletin of the Australian Mathematical Society},
				VOLUME = {70},
				YEAR = {2004},
				PAGES = {45 - 54},				
			}

			\bib{bes99}{article}{
				AUTHOR = {J. B\`es and A. Peris},
				TITLE = {Hereditarily hypercyclic operators},
				JOURNAL = {J. Funct. Anal.},
				FJOURNAL = {Journal of Functional Analysis},
				VOLUME = {167},
				YEAR = {1999},
				PAGES = {94 - 112},
			}
			
			\bib{bug04}{book}{
				AUTHOR = {Y. Bugeaud},
				TITLE = {Approximation by algebraic numbers},
				SERIES = {Cambridge Tracts in Mathematics},
				VOLUME = {160},
				PUBLISHER = {Cambridge University Press},
				YEAR = {2004},
			}
			
			\bib{cha11}{book}{
				AUTHOR = {I. Chalendar and J. R. Partington},
				TITLE = {Modern approaches to the invariant-subspace problem},
				SERIES = {Cambridge Tracts in Mathematics},
				VOLUME = {188},
				PUBLISHER = {Cambridge University Press},
				YEAR = {2011},
			}

			\bib{cha10}{book}{
				AUTHOR = {{I. Chalendar, J. R. Partington} and E. Pozzi},
				TITLE = {
					Multivariable weighted composition operators: lack of point spectrum, and cyclic vectors. 
					In: Topics in operator theory
				},
				SERIES = {Operator theory: advances and applications},
				VOLUME = {202},
				PUBLISHER = {Birkh\"auser Basel},
				YEAR = {2010},
			}
		
			\bib{cha20}{article}{
				AUTHOR = {{F. Chamizo, E. A. Gallardo-Guti\'errez, M. Monsalve-L\'opez} and A. Ubis},
				TITLE = {Invariant subspaces for Bishop operators and beyond},
				JOURNAL = {Adv. Math.},
				FJOURNAL = {Advances in Mathematics},
				VOLUME = {375},
				YEAR = {2020},
			}
		
			\bib{dav74}{article}{
				AUTHOR = {A. M. Davie},
				TITLE = {Invariant subspaces for Bishop's operators},
				JOURNAL = {Bull. Lond. Math. Soc.},
				FJOURNAL = {Bulletin of the London Mathematical Society},
				VOLUME = {6},
				YEAR = {1974},
				PAGES = {343 - 348},
			}

			\bib{fla08}{article}{
				AUTHOR = {A. Flattot},
				TITLE = {Hyperinvariant subspaces for Bishop-type operators},
				JOURNAL = {Acta Sci. Math.},
				FJOURNAL = {Acta Scientiarum Mathematicarum},
				VOLUME = {74},
				YEAR = {2008},
				PAGES = {689 - 718},
			}
			
			\bib{get87}{article}{
				AUTHOR = {R. M. Gethner and J. H. Shapiro},
				TITLE = {Universal vectors for operators on spaces of holomorphic functions},
				JOURNAL = {Proc. Amer. Math. Soc.},
				FJOURNAL = {Proceedings of the American Mathematical Society},
				VOLUME = {100},
				YEAR = {1987},
				NUMBER = {2},
				PAGES = {281 - 288},
			}
		
			\bib{gro11}{book}{
				AUTHOR = {K.-G. Grosse-Erdmann and A. Peris Manguillot},
				TITLE = {Linear chaos},
				SERIES = {Universitext},
				PUBLISHER = {Springer London},
				YEAR = {2011},
			}
			
			\bib{kec95}{book}{
				AUTHOR = {A. S. Kechris},
				TITLE = {Classical descriptive set theory},
				SERIES = {Graduate Texts in Mathematics},
				PUBLISHER = {Springer New York, NY},
				YEAR = {1995},
			}
		
			\bib{leo04}{article}{
				AUTHOR = {F. Le\'on-Saavedra and V. M\"uller},
				TITLE = {Rotations of hypercyclic and supercyclic operators},
				JOURNAL = {Integr. Equ. Oper. Theory},
				FJOURNAL = {Integral Equations and Operator Theory},
				VOLUME = {50},
				YEAR = {2004},
				PAGES = {385 - 391},
			}
		
			\bib{mac90}{article}{
				AUTHOR = {G. W. MacDonald},
				TITLE = {Invariant subspaces for Bishop-type operators},
				JOURNAL = {J. Funct. Anal.},
				FJOURNAL = {Journal of Functional Analysis},
				VOLUME = {91},
				YEAR = {1990},
				PAGES = {287 - 311}
			}

			\bib{par65}{article}{
				AUTHOR = {S. K. Parrott},
				TITLE = {Weighted translation operators},
				JOURNAL = {University of Michigan, Ph.D.},
				FJOURNAL = {The University of Michigan, Ph.D.},
				YEAR = {1965},
			}
			
		\end{biblist}
	\end{bibdiv}
							
\end{document}